\pgfplotsset{compat=1.18}
\numberwithin{equation}{section}
\newcommand{\setsymbol}[1]{\ensuremath{\mathbb{#1}}}%
\newcommand{\R}{\setsymbol{R}}%
\newtheorem{definition}{Definition}[section]
\newtheorem{lemma}[definition]{Lemma}
\newtheorem{theorem}[definition]{Theorem}
\newtheorem{proposition}[definition]{Proposition}
\newtheorem{corollary}[definition]{Corollary}
\newtheorem{remark}[definition]{Remark}
\title{Rigidity of asymptotically hyperboloidal initial data sets with vanishing mass}
\author{Sven Hirsch}
\address{Columbia University, 2990 Broadway, New York, NY 10027, USA}
\email{sven.hirsch@columbia.edu}
\author{Hyun Chul Jang}
\address{Department of Mathematics, Texas State University, 601 University Drive, San Marcos, TX 78666, USA}
\email{hcjang@txstate.edu}
\author{Yiyue Zhang}
\address{Beijing Institute of Mathematical Sciences and Applications, Beijing, 101408, China}
\email{zhangyiyue@bimsa.cn}
\begin{document}

\begin{abstract}
In Special Relativity, the vanishing of mass indicates either the absence of energy (vacuum) or the presence of massless radiation moving at the speed of light. In General Relativity, the same principle governs the interpretation of zero total mass: asymptotically flat initial data sets (IDS) \((M^n, g, k)\) with vanishing mass correspond either to slices of Minkowski spacetime or to slices of \(pp\)-wave spacetimes that model radiation.
In contrast, we demonstrate that asymptotically hyperboloidal spin IDS with zero mass must embed isometrically into Minkowski space, with no possible IDS configurations that model radiation in this setting. 
Our result holds under general decay assumptions where the total mass is well-defined.  
The proof relies on precise decay estimates for spinors on level sets of spacetime harmonic functions and works in all dimensions.
\end{abstract}

\maketitle
\section{Introduction}

In general relativity, the total mass for an isolated gravitational model is a fundamental physical quantity and serves as important geometric invariant in differential geometry.  The \emph{vanishing mass} case is remarkably rigid in several settings, including asymptotically flat initial data sets \cite{SY1981,BeigChrusciel,CM2006,HL2020} and certain special cases of the asymptotically hyperbolic setting \cite{ACG2008,Maerten2006,HJM2020}. In this paper we prove rigidity for \emph{asymptotically hyperboloidal} initial data with zero mass.

An asymptotically hyperboloidal initial data set is a triple $(M^n,g,k)$ where $M$ is an $n$–manifold, $g$ is asymptotic to the hyperbolic metric $b$ at infinity, and $k$ is a symmetric $(0,2)$–tensor asymptotic to $g$. 
These initial data sets arise as hyperboloidal slices of asymptotically Minkowski spacetime, which are important in general relativity for studying the dynamics of gravitational waves, see for instance \cite{Penrose1982,SchoenYauBondi,BieriChrusciel}. The case when $k$ approaches zero at infinity has been extensively studied as well; such initial data sets are called asymptotically Anti-de Sitter (AdS) initial data sets, corresponding to slices of asymptotically AdS spacetime, cf.  \cite{Zhang2004,Maerten2006,CMT2006,XZ2008,WX2015}. 

For such initial data sets, the energy-momentum functional is well-defined, providing a notion of the total mass as a fundamental geometric invariant. The Positive Mass Conjecture for such initial data sets asserts that the total mass is nonnegative, provided the dominant energy condition (DEC) holds, and that the mass vanishes only for hypersurfaces of Minkowski spacetime (or correspondingly, of AdS spacetime).

In the literature, the positive mass theorem for asymptotically hyperbolic initial data sets has been extensively studied.
The \emph{umbilic} case (i.e. $k=g$) has been first studied by X. Wang \cite{Wang2001} and generalized by P. Chru\'sciel and M. Herzlich \cite{CH2003} to the case under weaker asymptotic conditions, and they proved the positive mass theorem for this case under spin assumptions including rigidity. The inequality part of the non-spin umbilic case was investigated by several authors \cite{ACG2008,CD2019,Sakovich2021,CG2021}, and the general rigidity result was proved by \cite{HJM2020} (see also \cite{CGNP2018,HJ2022}).
The positive mass inequality for general asymptotically hyperboloidal initial data sets has been established for the spin case by \cite{CJL2004,CM2006,XZ2008}. 
A non-spin proof for the three-dimensional case was given by \cite{Sakovich2021} using the Jang equation, and independently by \cite{CWY2016} using the Liu-Yau quasi-local mass. The Jang equation approach was further extended in \cite{Lundberg2023} to the dimension between $3$ and $7$.
Moreover, the positive mass inequality for general asymptotically \emph{AdS} spin initial data sets was established by \cite{Zhang2004,Maerten2006,CMT2006,XZ2008,WX2015}, and the rigidity remains open except for the case $k=0$ under some additional decay conditions. In another forthcoming work, the case of equality in this setting will be analyzed in full generality.

Despite these advances, the equality case for asymptotically hyperboloidal data—characterizing when the mass vanishes—remains unsettled except in special cases (such as $k=g$ or $E=0$). Our main theorem resolves the rigidity in the hyperboloidal setting:

\begin{theorem}
    \label{thm:rigidity}
    Let $(M^n,g,k),n\ge 3,$ be a $C^{2,a}_{-q}$ asymptotically hyperboloidal spin initial data sets of decay rate $q\in(\frac{n}{2},n]$ satisfying the dominant energy condition.\footnote{For the precise definition, we refer to \cref{SS:AH IDS}.} Suppose that the mass of $(M,g,k)$ vanishes. Then $(M^n,g,k)$ isometrically embeds into the Minkowski spacetime.
\end{theorem}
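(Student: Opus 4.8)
The plan is to run a Witten-type argument tailored to the asymptotically hyperboloidal regime and to extract the rigidity from the equality case of the resulting integral identity, the genuinely new ingredient being sharp pointwise decay estimates for the Witten spinors obtained by foliating $M$ by level sets of a spacetime harmonic function. First I would fix, on the Dirac--Witten spinor bundle of $(M,g,k)$, the connection $\widehat\nabla_X\psi = \nabla_X\psi + \tfrac12\, k(X,e_i)\, e_i\cdot e_0\cdot\psi$, where $e_0\cdot$ is Clifford multiplication by the formal future timelike direction and one sums over an orthonormal frame $e_i$ of $(M,g)$; on the standard hyperboloidal slice of $\R^{1,n}$ this reduces to the imaginary Killing spinor equation, since there $k=g$. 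With $\slashed{D} = e_i\cdot\widehat\nabla_{e_i}$ the associated Dirac--Witten operator, one solves $\slashed{D}\psi = 0$ with $\psi$ asymptotic to a prescribed Killing spinor $\psi_0$ of the hyperboloidal model and integrates the Lichnerowicz--Weitzenböck identity to obtain
\[
\int_M\Big(|\widehat\nabla\psi|^2 + \langle\mathcal R\psi,\psi\rangle\Big)\;=\;\mathcal Q(\psi_0),
\]
where $\mathcal R\geq 0$ by the dominant energy condition (it is the familiar combination $\tfrac12(\mu + J\cdot e_0\cdot)$ after the hyperbolic shift of the energy density) and $\mathcal Q$ is the boundary quadratic form on model Killing spinors whose coefficients are the energy--momentum of $(M,g,k)$. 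The positive mass inequality in the spin case gives $\mathcal Q\geq 0$, and the hypothesis that the mass of $(M,g,k)$ vanishes says precisely that the energy--momentum is \emph{null}, i.e.\ that $\mathcal Q$ is a degenerate nonnegative form rather than positive definite. Hence for every $\psi_0$ in $\ker\mathcal Q$ the associated solution satisfies $\widehat\nabla\psi\equiv 0$ and $\mathcal R\psi\equiv 0$.

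If $\ker\mathcal Q$ were the whole model spinor space, the proof would close at once: a maximal family of $\widehat\nabla$-parallel spinors forces $\mathcal R\equiv 0$ as an endomorphism, whence $\mu\equiv 0$ (take the trace, using that $e_i\cdot e_0\cdot$ is traceless) and $J\equiv 0$ (the $e_i\cdot e_0\cdot$ being linearly independent), and it also forces the curvature $\widehat R$ of $\widehat\nabla$ to vanish; the Gauss--Codazzi fundamental theorem for spacelike hypersurfaces then produces the isometric embedding of $(M^n,g,k)$ into $\R^{1,n}$ with second fundamental form $k$. The real difficulty --- and the point where the hyperboloidal case diverges from the asymptotically flat one --- is that when the energy--momentum is null, $\ker\mathcal Q$ is only a proper subspace, so the Witten argument yields a priori only a \emph{partial} family of parallel spinors: exactly the configuration that in the asymptotically flat category is realized by $pp$-wave slices modeling radiation. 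The core of the proof is therefore to upgrade this partial family to the full one, equivalently to exclude such radiative geometries, and this is where spacetime harmonic functions enter. I would take a spacetime harmonic function $u$ with the asymptotics dictated by a null direction in the energy--momentum, study the data induced on its level sets $\Sigma_t$ together with the restrictions of the parallel spinors already produced (whose Dirac currents are causal, and null exactly in the obstructed case), and establish precise pointwise decay for these spinors as $t\to\infty$ by propagating estimates inward along the foliation. The decay range $q\in(\tfrac n2,n]$ is calibrated so that these estimates force the ``radiative defect'' --- the failure of $\widehat R$ to vanish, which measures the gap between $\ker\mathcal Q$ and the full space --- to decay strictly faster than any nonzero solution of the transport/elliptic equations it must satisfy along the leaves, hence to vanish identically. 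In other words, the asymptotically hyperboloidal decay is incompatible with a genuinely null energy--momentum, so $\ker\mathcal Q$ is full after all, and one concludes as above that $(M^n,g,k)$ embeds isometrically into Minkowski space with $\mu\equiv J\equiv 0$.

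The main obstacle is exactly this middle step: obtaining the sharp spinor decay estimates on the level sets of the spacetime harmonic function and using them to rule out the null/$pp$-wave configuration. By comparison, the Weitzenböck bookkeeping and the reconstruction of the embedding from a maximal parallel frame are standard once the range $q\in(\tfrac n2,n]$ has been shown to preclude radiation. Two subsidiary points also need attention: first, under the weak decay $q>\tfrac n2$ one must check that the energy--momentum and the boundary form $\mathcal Q$ are well defined and that the Witten spinors have exactly the asymptotics needed to identify the boundary term of the Weitzenböck identity with $\mathcal Q(\psi_0)$ --- again a consequence of the level-set decay estimates; second, one must verify that the ``flat'' Gauss--Codazzi data $(g,k)$ integrate to a genuine isometric embedding rather than merely a local immersion, which follows from completeness together with the asymptotically hyperboloidal structure near infinity (so that $M$ is, or may be replaced by, the simply connected model region).
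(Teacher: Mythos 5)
Your high-level strategy is the same as the paper's (Witten spinors whose boundary form degenerates at zero mass, a spacetime harmonic function built from the resulting parallel spinor, level-set analysis calibrated to $q>\tfrac n2$), but the step you yourself flag as the main obstacle is precisely the paper's central contribution, and your proposal asserts its conclusion without supplying the mechanism. The concrete missing ingredient is the decay-doubling phenomenon on the leaves: the level sets of $u$ are graphs over level sets of $v=-(t+x_n)$, and on such a leaf $1+\rho^2=t^2-x_n^2=(t-x_n)(t+x_n)$ with $t+x_n$ essentially constant, so $r\simeq t\simeq\rho^2$; hence every $O(r^{-q})$ error coming from the initial data becomes $\mathcal{O}(\rho^{-2q})$ on the $(n-1)$-dimensional asymptotically flat leaf, and $2q>n$ beats the critical decay $\rho^{3-n}$ of harmonic/superharmonic functions there. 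Without this observation there is no reason why the threshold $q>\tfrac n2$ should suffice, and your appeal to ``transport/elliptic equations along the leaves'' does not identify the actual quantity to which a maximum principle can be applied. Getting there also requires the intermediate estimates you presuppose but do not indicate how to prove: $\psi-\psi^\infty=O(r^{-q}|v|^{1/2})$, the existence and normalization of $u$ with $u-v=O(|v|^{3/2}r^{\frac12-q})$, the one-endedness and trivial topology needed for $X=\nabla u$ globally, and the graphical structure of the level sets.

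Moreover, the paper never upgrades $\ker\mathcal Q$ to the full space of model Killing spinors and then invokes Gauss--Codazzi (note the theorem's conclusion forces $E=|\vec P|=0$, not merely a null energy-momentum, so fullness of the kernel is only known a posteriori). Instead it stays with the single null family: the induced spinors on each leaf are parallel, so the leaves are flat; one then builds coordinates $(u,w_1,\dots,w_{n-1})$, writes the Killing development explicitly as $\bar g=2\,d\tau\,du+\big(|\nabla u|^{-2}+|Y|^2-2\partial_u\ell\big)du^2+\sum_\alpha dw_\alpha^2$, and applies the identity $\mu=-\tfrac12|\nabla u|^{2}\Delta_\Sigma F$ with $F=|\nabla u|^{-2}+|Y|^2-2\partial_u\ell$: the dominant energy condition gives $\Delta_\Sigma F\le0$, the doubled decay gives $F=|u|^{-2}+\mathcal{O}(\rho^{3-2q})$ with $3-2q<3-n$, and the maximum principle on the flat leaf forces $F\equiv|u|^{-2}$, so $\bar g$ is flat and $(M^n,g,k)$ is a Minkowski slice. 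Your outline is compatible with this route and could be completed along these lines, but as written the decisive estimate, the object the maximum principle is applied to, and the way the rigidity is finally extracted are all left unspecified, so the argument has a genuine gap at its core.
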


The vanishing mass case can be interpreted as follows: in relativistic physics, a massless object is physically realized only if it moves at the speed of light. Otherwise, it carries no energy and is absent. Thus, one might expect that initial data sets with vanishing mass correspond to slices of either Minkowski spacetime or non-vacuum pp-wave spacetimes, which model a plane-fronted radiation moving at the speed of light (see Figure \ref{fig2} for a schematic depiction of a pp-wave spacetime). Indeed, it is proved in the recent work of the first and third authors \cite{HZ23, HZ24} that asymptotically flat spin initial data sets with vanishing mass must be slices of either Minkowski spacetime or a non-vacuum pp-wave spacetime. In contrast, our main result shows that asymptotically hyperboloidal spin initial data sets with vanishing mass exhibit far greater rigidity, admitting only slices of Minkowski spacetime. This demonstrates that, unlike in the asymptotically flat case, no configurations modeling non-vacuum radiation are possible in the asymptotically hyperboloidal setting, highlighting the fundamental geometric differences between these two classes of initial data.

\begin{corollary}
    \label{cor:no pp-wave}
    There exist no $C^{2,a}_{-q}$ asymptotically hyperboloidal spin initial data set of decay rate $q\in(\frac{n}{2},n]$ satisfying the dominant energy condition that isometrically embeds into a non-vacuum pp-wave spacetime.
\end{corollary}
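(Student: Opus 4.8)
The plan is to argue by contradiction and reduce everything to \cref{thm:rigidity}: I will show that an asymptotically hyperboloidal spin initial data set sitting inside a $pp$-wave spacetime must have vanishing mass, so that \cref{thm:rigidity} forces it to be a slice of Minkowski space — hence a \emph{vacuum} slice — contradicting the ambient spacetime being non-vacuum. So suppose $(M^n,g,k)$ is a non-trivial $C^{2,\alpha}_{-q}$ asymptotically hyperboloidal spin initial data set with $q\in(\tfrac n2,n]$ satisfying the DEC, embedded isometrically as a Cauchy slice of a non-vacuum $pp$-wave spacetime $(\mathcal N^{n+1},\bar g)$. In adapted coordinates $\bar g=2\,du\,dv+H(u,x)\,du^2+|dx|^2$ with $x\in\R^{n-1}$: the vector field $V:=\partial_v=\bar\nabla u$ is covariantly constant and null, $\mathcal N$ carries a parallel spinor $\psi_0$ with $\gamma(V)\psi_0=0$, the spacetime scalar curvature $\bar R$ vanishes, and $\overline{\Ric}$ equals a nonzero constant times $(\Delta_x H)\,du\otimes du$. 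In particular $\mathcal N$ is non-vacuum precisely when $\Delta_x H\not\equiv 0$, and the $C^{2,\alpha}_{-q}$ asymptotics force $H$ and its derivatives to decay, so that $\mathcal N$ is asymptotically Minkowskian.

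\emph{Step 1: the mass vanishes.} By the positive mass inequality for asymptotically hyperboloidal spin initial data sets \cite{CJL2004,CM2006,XZ2008}, the energy--momentum covector $\mathbf P$ of $(M,g,k)$ is future causal, so $m(M,g,k)=|\mathbf P|\ge 0$. For the reverse inequality, restrict $\psi_0$ to $M$; by the Gauss-type formula for spinors this restriction is annihilated by the Dirac--Witten (Sen) connection $\widehat\nabla$ on $M$, and since $\mathcal N$ is asymptotically Minkowskian it is asymptotic to an imaginary Killing spinor whose associated asymptotic vector $Q_0$ is future-pointing \emph{null}. Substituting $\psi_0$ into the Dirac--Witten integral identity underlying the positive mass proof, the bulk term $\int_M|\widehat\nabla\psi_0|^2$ vanishes, and so does the matter term: the energy--momentum density $\mathcal J=\mu\,\nu+J^\sharp$ is a multiple of $(\Delta_x H)\,\bar g(V,\nu)\,V$, hence proportional to $V$, so $\gamma(\mathcal J)\psi_0\propto\gamma(V)\psi_0=0$. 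Therefore the boundary term at conformal infinity vanishes, i.e.\ $\langle\mathbf P,Q_0\rangle=0$; a future-causal covector orthogonal to a future-null vector is either zero or null and parallel to it, so in all cases $m(M,g,k)=|\mathbf P|=0$. (This is the hyperboloidal counterpart of the vanishing of the ADM energy--momentum on $pp$-wave slices established in \cite{HZ23,HZ24}.)

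\emph{Step 2: the contradiction.} Since $m(M,g,k)=0$, \cref{thm:rigidity} shows that $(M,g,k)$ embeds isometrically into Minkowski spacetime; in particular its constraint data vanishes identically, so $\mu\equiv 0$ on $M$. But computing $\mu$ instead from the $pp$-wave embedding gives, up to a positive constant, $\mu=\overline{\Ric}(\nu,\nu)$, which is a nonzero constant multiple of $(\Delta_x H)\,\bar g(V,\nu)^2$; since $\nu$ is timelike it is never orthogonal to the null vector $V$, so $\bar g(V,\nu)\neq 0$ on $M$, and hence $\Delta_x H\equiv 0$ along $M$. On the other hand $\{\Delta_x H\neq 0\}$ is a nonempty open subset of $\mathcal N$, invariant under the flow of $V$, whose orbits are complete null geodesics; since $M$ is Cauchy, each such orbit meets $M$, so $M$ intersects $\{\Delta_x H\neq 0\}$, a contradiction. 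Hence no such $(M,g,k)$ exists.

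I expect Step 1 to be the main obstacle: one has to pin down the precise form of the Dirac--Witten boundary functional representing $\mathbf P$ in the $C^{2,\alpha}_{-q}$ asymptotically hyperboloidal category, verify that the restricted parallel spinor $\psi_0$ is an admissible test spinor there with exactly the asymptotics of a \emph{null} imaginary Killing spinor, and check that the contributions of the wave profile $H$ to the various error terms decay strictly below the critical rate — which is where $q>\tfrac n2$ enters. A secondary point needing care, if $M$ is not assumed Cauchy in $\mathcal N$ by convention, is the final intersection claim, which can alternatively be deduced from uniqueness of the maximal globally hyperbolic development of the (now vacuum, in fact flat) data $(M,g,k)$.
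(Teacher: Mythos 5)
Your overall route---restrict the ambient parallel spinor, deduce that the mass vanishes, and invoke \cref{thm:rigidity}---is in the spirit of how the paper obtains the corollary (there it is read off from the fact that the proof of \cref{thm:rigidity} constructs the flat Killing development, so vanishing-mass data are Minkowski slices, i.e.\ trivial). But as written your argument has two genuine gaps. First, Step 1 is not actually carried out, and it is the heart of the matter: the mass formula (\cref{massformula}) applies to test spinors of the form $\chi\mathcal{A}\sigma+\psi_0$ with $\sigma$ a spacetime Killing spinor of $(\mathbb{H}^n,b,b)$ and $\psi_0\in H^1_0(\overline{\mathcal{S}})$, so to conclude $\mathcal{H}_{hyp}(V_\sigma)=0$ you must prove that the restriction of the pp-wave's parallel spinor is asymptotic, at a rate compatible with $q>\tfrac n2$, to a type I (null) imaginary Killing spinor of hyperbolic space, with the difference lying in $H^1_0$. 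This is exactly where the hyperboloidal asymptotics bite (compare Section \ref{S:example} and Section \ref{SS: asymptotic analysis}: hyperboloidal slices that meet the wave region generically fail the decay hypotheses), and you explicitly defer it. Without it the implication ``embeds in a pp-wave $\Rightarrow m=0$'' is unproved, and nothing else in your proposal supplies the vanishing-mass input needed for \cref{thm:rigidity}.

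Second, Step 2 aims at the wrong contradiction. You silently strengthen ``isometrically embeds'' to ``is a Cauchy slice'', and your final step (every orbit of $\partial_v$ through the non-vacuum region meets $M$) genuinely needs this; for a mere isometric embedding the claimed contradiction with non-vacuumness is false, as the paper itself notes at the end of Section \ref{S:example}: hyperbolic space, a Minkowski slice with $\mu\equiv 0$, does embed isometrically into non-vacuum pp-wave spacetimes. The MGHD alternative you sketch does not repair this, since uniqueness of the maximal development only controls the Cauchy development of $M$, not the ambient spacetime outside it. The correct (and much shorter) way to close the argument is the one the non-triviality hypothesis is there for: once $m=0$, \cref{thm:rigidity} makes $(M^n,g,k)$ a hyperboloidal slice of Minkowski space, i.e.\ trivial, which already contradicts the assumption that the data set is non-trivial; no contradiction with the ambient spacetime being non-vacuum is available or needed.
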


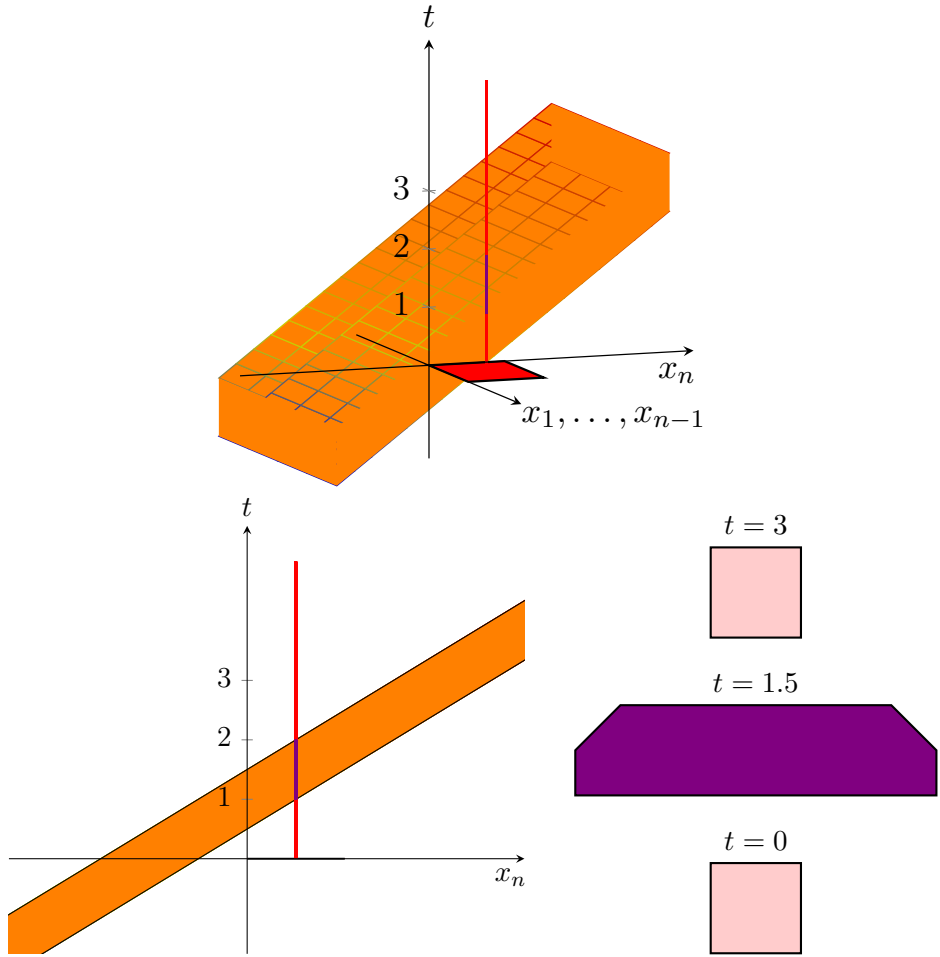
\begin{figure}
\centering
\begin{tikzpicture}[scale=1.2]
    \begin{axis}[
        axis lines = middle,
        xlabel = {\(x_1, \dots, x_{n-1}\)},
        ylabel = {\(x_n\)},
        zlabel = {\(t\)},
        xlabel style = {anchor=north east, xshift=62pt, yshift=3pt},
        ylabel style = {anchor=north west, xshift=-15pt, yshift=0pt},
        zlabel style = {anchor=south},
        domain = -2:2,
        samples = 20,
        samples y = 20,
        zmin = -1, zmax = 5,
        xmin = -1.5, xmax = 2,
        ymin = -1, ymax = 1.5,
        xtick = \empty,
        ytick = \empty,
          ztick = {0,  1,  2,3},
        zticklabels = {0, 1, 2,3},
        enlargelimits = true,
        view = {70}{10},
        axis on top
    ]

    \addplot3 [
            surf,
            color=orange,
            shader=faceted,
            opacity=0.5,
            samples=5,
            domain=-1.5:1.5,y domain=-1:1.2,
            z buffer=sort, draw=none
        ] (
            {x},
            {y},
            {2*y+1.5}
        );

    \addplot3 [
            surf,
            color=orange,
            shader=faceted,
            opacity=0.5,
            samples=5,
            domain=-1.5:1.5,y domain=-1:1.2,
            z buffer=sort, draw=none
        ] (
            {x},
            {y},
            {2*y+.5}
        );
    \addplot3 [
            fill=orange, 
            opacity=0.3, draw=none 
        ] coordinates {
            (1.5,-1,-1.5)
            (1.5,-1,-.5)
            (-1.5,-1,-.5)
            (-1.5,-1,-1.5)
        };

        \addplot3 [
            fill=orange, 
            opacity=0.3, draw=none 
        ] coordinates {
            (1.5,-1,-1.5)
            (1.5,-1,-.5)
            (1.5,1.2,3.9)
            (1.5,1.2,2.9)
        };

    \addplot3 [
            fill=orange, 
            opacity=0.3, draw=none 
        ] coordinates {
            (1.5,1.2,3.9)
            (1.5,1.2,2.9)
            (-1.5,1.2,2.9)
            (-1.5,1.2,3.9)
        };

    \addplot3 [
        fill=red,
        fill opacity=0.2,
        thick,
        line width=.25mm
    ] coordinates {
        (0, 0, 0)
        (1, 0, 0)
        (1, .5, 0)
        (0, .5, 0)
    } -- cycle;

        \addplot3 [
        thick,
        red,
        domain=0:5,
        samples=2
    ] 
    ({.5}, {.25}, {x});

    \addplot3 [
        thick,
        violet,
        domain=1:2,
        samples=2
    ] 
    ({.5}, {.25}, {x});
    
    \end{axis}
\end{tikzpicture}
\quad

\begin{tikzpicture}
    \begin{axis}[
        axis lines = middle,
        xlabel = {\(x_1, \dots, x_{n-1}\)},
        ylabel = {\(x_n\)},
        zlabel = {\(t\)},
        xlabel style = {anchor=north east, xshift=65pt, yshift=10pt},
        ylabel style = {anchor=north west, xshift=-15pt, yshift=0pt},
        zlabel style = {anchor=south},
        domain = -2:2,
        samples = 20,
        samples y = 20,
        zmin = -1, zmax = 5,
        xmin = -1.5, xmax = 2,
        ymin = -1, ymax = 1.2,
        xtick = \empty,
        ytick = \empty,
          ztick = {0,  1,  2,3},
        zticklabels = {0, 1, 2,3},
        enlargelimits = true,
        view = {90}{0},
        axis on top
    ]

    \addplot3 [
            surf,
            color=orange,
            shader=faceted,
            opacity=0.5,
            samples=5,
            domain=-1.5:1.5,y domain=-2:2,
            z buffer=sort
        ] (
            {x},
            {y},
            {2*y+1.5}
        );

    \addplot3 [
            surf,
            color=orange,
            shader=faceted,
            opacity=0.5,
            samples=5,
            domain=-1.5:1.5,y domain=-2:2,
            z buffer=sort
        ] (
            {x},
            {y},
            {2*y+.5}
        );
    \addplot3 [
            fill=orange, 
            opacity=0.3, 
        ] coordinates {
            (1.5,-2,-3.5)
            (1.5,-2,-2.5)
            (-1.5,-2,-2.5)
            (-1.5,-2,-3.5)
        } -- cycle;

        \addplot3 [
            fill=orange, 
            opacity=0.3, 
        ] coordinates {
            (1.5,-2,-3.5)
            (1.5,-2,-2.5)
            (1.5,2,5.5)
            (1.5,2,4.5)
        } -- cycle;

    \addplot3 [
            fill=orange, 
            opacity=0.3, 
        ] coordinates {
            (1.5,2,5.5)
            (1.5,2,4.5)
            (-1.5,2,4.5)
            (-1.5,2,5.5)
        } -- cycle;

    \addplot3 [
        fill=red,
        fill opacity=0.2,
        thick,
        line width=.25mm
    ] coordinates {
        (0, 0, 0)
        (1, 0, 0)
        (1, .5, 0)
        (0, .5, 0)
    } -- cycle;

        \addplot3 [
        thick,
        red,
        domain=0:5,
        samples=2
    ] 
    ({.5}, {.25}, {x});

    \addplot3 [
        thick,
        violet,
        domain=1:2,
        samples=2
    ] 
    ({.5}, {.25}, {x});
    
    \end{axis}
\end{tikzpicture}
\quad
\begin{tikzpicture}[scale=1.2]
    \fill[red!20] (1,3.5) rectangle (2,4.5);
    \draw[thick] (1,3.5) rectangle (2,4.5);
   \fill[red!20] (1,0) rectangle (2,1);
       \draw[thick] (1,0) rectangle (2,1);
 \coordinate (A) at (-.5,1.75);
    \coordinate (B) at (3.5,1.75);
    \coordinate (E) at (3.5,2.25);
    \coordinate (C) at (3,2.75);
    \coordinate (D) at (0,2.75);
    \coordinate (F) at (-.5,2.25);

    \draw[thick, fill=violet, fill opacity=0.2] (A) -- (B) -- (E) -- (C) -- (D) -- (F) -- cycle;
    \node at (1.5, 1.25) {\textbf{$t = 0$}};
\node at (1.5, 4.75) {\textbf{$t = 3$}};
\node at (1.5, 3) {\textbf{$t = 1.5$}};

\end{tikzpicture}

\caption{
On the left, an example of a pp-wave is illustrated. 
The majority of the spacetime is vacuum and coincides with Minkowski space, with the exception of the wave itself, which is highlighted as an orange beam traveling at the speed of light. This beam extends in the \(x_1, \dots, x_{n-1}\) directions with an appropriate decay profile.
Any asymptotically flat initial data set contained within this spacetime satisfies the dominant energy condition; it has zero mass but non-zero energy. 
To illustrate the properties of a pp-wave, consider the effect it has on an observer, represented by the red line. As the pp-wave passes through the observer (around time \(t=1\)), a notable elongation occurs in the \(x_n\) direction.
This stretching effect is most pronounced near the center of the wave and diminishes as \(x_1, \dots, x_{n-1}\) increase. After the wave has passed through the observer, everything returns to its original state. This phenomenon is visually represented on the right.
}
\label{fig2}
\end{figure}

Intuitively, the hyperboloidal slice in a pp-wave spacetime could either remain unaffected by the radiation, appearing as totally umbilical hyperbolic space, or be influenced in such a way that the decay conditions for asymptotically hyperboloidal initial data sets does not hold. See the beginning of Section \ref{SS: asymptotic analysis} for more details. 

The key idea of proof is the following: we first obtain several spinors satisfying the overdetermined equation $\nabla_i\psi=-\frac12k_{ij}e_je_0\psi$ by using a spinorial integral formula and the assumption that the mass of the initial data set vanishes.
This spinorial integral formula has first been shown by \cite{Wang2001} and is based on Witten's proof of the asymptotically flat positive mass theorem \cite{Witten1981}.
For completeness sake, we include the entire proof in \cref{Appendix:PMT}.
Using the family of spinors solving this overdetermined equation, we are able to construct a spacetime harmonic function $u$.
Spacetime harmonic functions have first been used in \cite{HKK}, to obtain a new proof of the spacetime positive mass theorem in Dimension 3, also see the survey paper \cite{BHKKZ}.
Moreover, they also led to many applications in purely Riemannian settings, e.g., \cite{HKKZ}.
Once having obtained this spacetime harmonic function $u$, we analyze its level sets $\Sigma_t$ and construct parallel spinor on each level set $\Sigma_t$ based on $\psi $ and $|\nabla u|$.
Due to the abundance of such spinors, we conclude that $\Sigma_t$ are flat.
While the proof closely resembles the one in the asymptotically flat case up to this point, the novel PDE and decay estimates are required to deal with the hyperbolic asymptotics. In particular, the fact that $k\to g$ at $\infty$ instead of $k\to0$ has a significant impact; for instance, it makes the non-linearity in the spacetime Laplace equation $\Delta u=-\operatorname{tr}_g(k)|\nabla u|$ much more present. It is also the main reason for \cref{cor:no pp-wave} that there are in fact no non-trivial pp-wave spacetime solutions in this asymptotically hyperboloidal case.

The paper is organized as follows:
In Section \ref{S:preliminaries}, we present several preliminaries, including mass minimizing spinors in hyperbolic space. 
In Section \ref{S:rigidity}, we obtain the improved decay estimates and prove \cref{thm:rigidity}. Since our proof directly constructs the Killing development of the given initial data set, it immediately implies \cref{cor:no pp-wave}. In Section \ref{S:example}, we discuss an example of a hyperboloidal slice in a pp-wave spacetime, illustrating why hyperboloidal hypersurfaces in a pp-wave spacetime do not satisfy the decay conditions.
Finally, we give a version of the proof of the positive mass theorem for asymptotically hyperboloidal spin initial data sets in Appendix \ref{Appendix:PMT}.

\textbf{Acknowledgements:} SH was partially supported by the National Science Foundation under Grant No. DMS-1926686, and by the IAS School of Mathematics. YZ was partially supported by NSFC grant NO.12501070 and the startup fund from BIMSA. 
The authors are also grateful to an anonymous referee whose suggestions led to various improvements.


\section{Definitions and prerequisites}\label{S:preliminaries}

\subsection{Hyperboloidal and upper half space coordinates for hyperbolic space}

For later use, we recall the two common coordinate charts for the standard hyperbolic space.
\begin{enumerate}
    \item As the upper sheet of the hyperboloid in $\mathbb{R}^{n,1}$, we have
    \begin{align*}
        \mathbb{H}^n=\{(x,t)\in\mathbb{R}^{n,1}\,|\, x_1^2+\cdots+ x_n^2-t^2=-1,t>0\}.
    \end{align*}
    We call $\mathbf{x}=(x_1,\cdots,x_n)$ the hyperboloidal coordinates. In this chart, the hyperbolic metric $b$ is written as
    \begin{align*}
        b=\frac{dr^2}{1+r^2}+r^2g_{\mathbb{S}^{n-1}}    
    \end{align*}
    where $r=|\mathbf{x}|$ and $g_{\mathbb{S}^{n-1}}$ is the standard unit sphere metric.
    
    \item Using the upper half space model, we have the coordinate chart $(y_1,\cdots, y_n), y_n>0,$ in which the hyperbolic metric $b$ is written as
        \begin{align*}
            b=\frac{1}{y_n^2}(dy_1^2+\cdots+dy_n^2).
        \end{align*}
    This coordinate chart is called the upper half space coordinates.
    
\end{enumerate}

Define the functions
\begin{align*}
    &r=|\mathbf{x}|,\quad t=\sqrt{1+r^2} \text{ from the hyperboloidal coordinates,}\\
    &\rho=\sqrt{y_1^2+\cdots+ y_{n-1}^2}\text{ from the upper half space coordinates.}
\end{align*}

The transformation rule is given as the following:
\begin{align*}
    y_i=\frac{x_i}{t+x_n}\text{ for }i=1,2,\ldots,n-1,&\quad y_n=\frac{1}{t+x_n}.
\end{align*}
Thus, we also have
\begin{align} \label{rt}
    t=\frac{1+\rho^2+y_n^2}{2y_n},\quad t+x_n=\frac{1}{y_n}.
\end{align}

\subsection{Asymptotically hyperboloidal initial data sets}\label{SS:AH IDS}

\begin{definition}[Weighted function spaces] \label{Def:weighted} 

 For $ a \in (0, 1)$, $s=0, 1, 2, \dots$, and $q\in \mathbb{R}$, the weighted H\"older space $C^{s,a}_{-q} (\mathbb{H}^n\setminus B_2)$ is  the collection of $C^{s,a}_{\mathrm{loc}}(\mathbb{H}^n\setminus B_2)$-functions $f$ that satisfy
\begin{align*}
		\| f \|_{C^{s,a}_{-q}(\mathbb{H}^n\setminus B_2)} := \sum_{|I|=0, 1,\dots, k}\sup_{x\in \mathbb{H}^n\setminus B_2} r^q |\nabla^I f(x)| +\sup_{x\in \mathbb{H}^n\setminus B_2} r^{q} [\nabla^s f]_{a; B_1(x)} <\infty,
\end{align*}
where the covariant derivatives and norms are taken with respect to the reference metric $b$, $B_1(x)\subset \mathbb{H}^n$ is a geodesic unit ball centered at $x$, and  
\[
	[ \nabla^s f]_{a; B_1(x)} := \sup_{1\le i_1, \dots, i_s\le n} \sup_{y\neq z\in B_1(x)} \frac{|e_{i_1} \cdots e_{i_s}(f) (y) - e_{i_1}\dots e_{i_s}(f) (z)|}{ d(y, z)^a}
\] 
with respect to a fixed local orthonormal frame $\{ e_1,\dots, e_n\}$ on $(\mathbb{H}^n\setminus \{0\}, b)$.
\end{definition}

\begin{definition}[Asymptotically hyperboloidal initial data sets] \label{Def:AH}
Let $(M^n,g)$ be a connected complete Riemannian manifold without boundary, and let $k$ be a symmetric $2$-tensor on $M^n$. 
We say $(M^n, g, k)$ is a $C^{s,a}_{-q}$-asymptotically hyperboloidal initial data set of decay rate $q\in (\tfrac {n}2,n]$ if it satisfies the following conditions:  
\begin{enumerate}
    \item There is a compact set $\mathcal{C}\subset M$ such that we can write $M\setminus \mathcal{C}=\cup_{\ell=1}^{\ell_0}M_{end}^{\ell}$ where the ends $M_{end}^\ell$ are pairwise disjoint and admit diffeomorphisms $\phi^\ell$ to the complement of a ball $\mathbb{H}^n \setminus B$.
    \item On each end, let
    \begin{equation*}
        e^\ell:=\phi_\ast^\ell g- b \quad\text{and}\quad \eta^\ell:=\phi_\ast^\ell (k-g).
    \end{equation*}
    Then we have for each $\ell$
    \begin{equation*}
        (e^\ell,\eta^\ell)\in C^{s,a}_{-q}\times C^{s-1,a}_{-q}.
    \end{equation*}
    \item Recall the energy density $\mu$ and the momentum density $J$:
    \begin{equation*}
        \begin{split}
            \mu:=&\frac{1}{2}\left( R+|\operatorname{tr}_g(k)|^2-|k|_g^2\right),\\
            J:=&\operatorname{div}[k-(\operatorname{tr}_gk)g].
        \end{split}
    \end{equation*}
    We require that $\mu$ and $J$ satisfy
    \begin{equation*}
            |\mu|\bar{r}\in L^1(M^n),\quad |J|_g\,\bar{r}\in L^1(M^n)
    \end{equation*}
    where $\bar{r}$ is a smooth positive function on $M$ satisfying $\bar{r}=r\circ\phi^\ell$ on each end.
\end{enumerate}

\end{definition}

We sometimes say that $(M^n,g,k)$ is $C^{s,a}_{-q}$-asymptotically hyperboloidal.
The definition of weighted function spaces from Definition \ref{Def:weighted} extends to asymptotically hyperboloidal initial data sets with the help of the diffeomorphisms $\phi^\ell$. We often omit the pullback notation $\phi_*^\ell$ when it is clear from the context.

\begin{definition}[Dominant energy condition]
  We say $(M,g,k)$ satisfies the \emph{dominant energy condition} if $\mu\ge |J|$.
\end{definition}

\begin{definition}[The energy-momentum vector]
\label{def:energy momentum vector}
    Let $M_{ext}$ be an end of an asymptotically hyperbololidal initial data set $(M^n,g,k)$.
    
    Let
    \begin{align*}
        \mathcal{N}&=\{V\in C^\infty(\mathbb{H}^n)\,|\,\mathrm{Hess}_b V=V\,b\}=\mathrm{span}_{\R}\{t,x_1,\ldots,x_n\}.
    \end{align*}
    The energy-momentum functional $\mathcal{H}_{hyp}:\mathcal{N}\to\mathbb{R}$ for the end $M_{ext}$ is defined by
    \begin{equation*}
        \mathcal{H}_{hyp}(V)=\lim_{r\to\infty}\int_{S_r(b)} \left[V(\mathrm{div}_b(e)-d\mathrm{tr}_b(e))+\mathrm{tr}_b(e+2\eta)dV-(e+2\eta)(\mathring{\nabla} V,\cdot)\, \right](\nu_b)d\mu_b,
    \end{equation*}
    where $\mathring{\nabla}$ is the connection with respect to $b$, $e=g-b$, $\eta= k-g$, and $\nu_b$ is the unit normal vector on $S_r(b)$.
    
    The energy-momentum vector $(E,\vec{P})$ is defined as
    \begin{align*}
        E:=\mathcal{H}_{hyp}(t),\quad P_i:=\mathcal{H}_{hyp}(x_i)\quad\mathrm{for }\quad i=1,\ldots,n.
    \end{align*}
    The total mass is the Minkowskian length of the mass vector, i.e., 
    \begin{align*}
        m=\sqrt{E^2-|\vec{P}|^2}
    \end{align*}
\end{definition}

\subsection{Spacetime spinor bundle}

Let $(M^n,g,k)$ be an asymptotically hyperboloidal initial data set which is spin.
We denote with $\mathcal S$ the spinor bundle of $M^n$, with $\nabla$ the induced connection, and with $\slashed D=e_i\nabla_i$ the Dirac operator. In addition, we denote by $\mathring{\nabla}$ the connection with respect to $b$ on the spinor bundle of $\mathbb{H}^n$.
\begin{definition}
   We say that $\overline{\mathcal S}=\mathcal S\oplus \mathcal S$ is the spacetime spinor bundle. Given $\phi=(\phi_1,\phi_2)\in \overline{\mathcal S}$, the space $\R^{n,1}=\operatorname{span}\{e_0,e_1,\dots,e_n\}$ acts on $\overline{\mathcal S}$ via Clifford multiplication 
   \begin{align*}
        e_l(\psi_1,\psi_2)=(e_l\psi_2,e_l\psi_1),\quad e_0(\psi_1,\psi_2)=(\mathbf{i}\psi_2,-\mathbf{i}\psi_1).
    \end{align*}
    The corresponding connection and Dirac operator are still be denoted with $\nabla, \slashed D$. The spacetime connection $\widetilde{\nabla}$ on $\overline{\mathcal S}$ is defined by
    \begin{align*}
        \widetilde{\nabla}_i\psi:=\nabla_i\psi+\frac{1}{2}k_{ij}e_je_0\psi,
    \end{align*}
    and we denote by $\widetilde{\slashed{D}}:=e_i\widetilde{\nabla}_i$ the corresponding Dirac operator.
    We call any $\widetilde{\nabla}$-parallel spinor \emph{an spacetime Killing spinor}, and the following equation is called the spacetime Killing equation:
    \begin{align*}
        \nabla_i\psi+\frac{1}{2}k_{ij}e_je_0\psi=0.
    \end{align*}
    In particular, the Clifford multiplication defined above decomposes the spacetime Killing equation on $(\mathbb{H}^n,b,b)$ into two imaginary Killing equations:
        \begin{equation*}
            \mathring{\nabla}_{i} \psi_1-\frac{\mathbf{i}}{2}e_i\psi_1=0, \quad \mathring{\nabla}_{i} \psi_2+\frac{\mathbf{i}}{2}e_i\psi_2=0.
        \end{equation*}
\end{definition}

We remark that the Clifford multiplication defined above is equivalent to the standard one used in \cite{Witten1981} up to change of basis. The second component of such a two-component spinor $\phi=(\phi_1,\phi_2)$ describes \emph{antimatter} as first observed by Oppenheimer \cite{Oppenheimer} based on Dirac's foundational 1928 paper \cite{Dirac}.

\begin{remark}
Instead of using the spacetime spinor bundle, it is also possible to work with the modified connection $\hat{\nabla}$ on $\mathcal{S}$ defined by
    \begin{align}\label{eq modified}
        \hat{\nabla}_i\phi=\nabla_i\phi+\frac{\mathbf{i}}{2}k_{ij}e_j\phi.
    \end{align}
    Then an analogous Lichnerowicz formula holds for the corresponding Dirac operator $\hat{\slashed{D}}:=e_i\hat{\nabla}_i$. 
    In fact, both approaches are equivalent.
    More precisely, a spinor $\phi\in \mathcal S(M) $ solving \eqref{eq modified} gives rise to a spinor $\psi\in\overline{\mathcal S}(M)$ solving $\tilde\nabla \psi=0$ and vice versa.
\end{remark}

 \subsection{Killing spinors of hyperbolic space}

Consider the upper half space model of hyperbolic space $\mathbb{H}^n$ 
\begin{equation*}
    b=\frac{dy_1^2+\cdots +dy_n^2}{y_n^2}
\end{equation*}
and let $e_i=y_n^{-1}\partial y_i$, $i=1,2,\dots,n$.

Recall that the standard hyperbolic space $\mathbb{H}^n$ admits imaginary Killing spinors, which are defined as the spinors satisfying the equation
\begin{align*}
    \mathring{\nabla}_i\psi=\pm\frac{\mathbf{i}}{2}e_i\psi.
\end{align*}
As introduced in \cite[Section 3]{Baum1989}, there are two types of Killing spinors on $\mathbb{H}^n$: for any Killing spinor $\psi$ on $\mathbb{H}^n$, let $f_\psi=|\psi|^2_b$ and $q_\psi=f_\psi^2-|\mathring{\nabla}f_\psi|_b^2$. 
Clearly, $q_\psi\ge 0$ and it follows directly from the Killing equation that the function $q_\psi$ is constant.
We call $\psi$ a type I Killing spinor if $q_\psi=0$ and a type II Killing spinor if $q_\psi>0$. 

In particular, for a type I Killing spinor $\psi$, we can select an upper half model such that $|\psi|_b^2=y_n^{-1}$. 
\begin{proposition}\cite[Theorem 3\&4]{Baum1989}
    A type I Killing spinor $\psi$ on $\mathbb{H}^n$ satisfies the following:
\begin{enumerate}
   
    \item  Suppose $n$ is odd.
    \begin{itemize}
        \item The restriction of $\psi$ on the horosphere $\Sigma:=\{y_n=\textit{constant}\}$, $\phi\in \mathcal{S}_\Sigma$, is parallel. 
        \item $\psi(\mathbf{y},y_n)=y_n^{-\frac{1}{2}}\phi(y)$, where $\mathbf{y}=(y_1,\cdots, y_{n-1})$.
    \end{itemize}
     \item Suppose $n$ is even.
     \begin{itemize}
         \item $\psi|_\Sigma=\psi_1\oplus (-1)^{\frac{n}{2}-1}\hat{\psi}_1$, where $\psi_1$ is a parallel spinor on $\Sigma$ and $\hat{\psi}_1=e_n\psi_1$, $e_n=y_n\partial y_n$. 
         \item $\psi(\mathbf{y},y_n)=y_n^{-\frac{1}{2}}\left[\psi_1(y)\oplus (-1)^{\frac{n}{2}-1}\hat{\psi}_1(y)\right]$.
     \end{itemize}
\end{enumerate}
\end{proposition}

According to \cite[Equation (v)]{Baum1989}, we have $e_n\psi=\mathbf{i}\psi$ when $\psi$ satisfies $\mathring{\nabla}_i \psi=\frac{\mathbf{i}}{2}e_i\psi$, and $e_n\psi=-\mathbf{i}\psi$ when $\mathring{\nabla}_i \psi=-\frac{\mathbf{i}}{2}e_i\psi$. 

It turns out that for our purposes it suffices to consider type I spinors which we will elaborate on in the next section.


\subsection{The mass minimizing spinor of the hyperboloidal PMT}

Here, we observe that the total mass can be realized by a spacetime Killing spinor consisting of type I Killing spinors.

\textbf{Notation. }In what follows, we will not distinguish between a spacetime Killing spinor $\psi^\infty$ on $\mathbb{H}^n$ and the spinor $\chi\mathcal{A}\psi^\infty$ defined on $M$ with support in the chosen end $M_{ext}$ when the context makes it clear. Here, $\mathcal{A}$ is the isomorphism introduced in \cref{SS:isomorphism} between the spacetime spinor frame bundles of $b$ and $g$, and $\chi$ is a smooth cut-off function on $M$ that equals $0$ outside $M_{ext}$ and $1$ for large $r$ in $M_{ext}$. 

\begin{proposition}\label{prop:mass minimizing spinor}
    Let $(M^n,g,k)$ be an $C^{2,a}_{-q}$-asymptotically hyperboloidal initial data set satisfying the dominant energy condition. Then there exists a spacetime Killing spinor $\psi^\infty=(\psi_1,\psi_2)$ on $\mathbb{H}^n$ such that 
    \begin{enumerate}
        \item $\psi_1$ and $\psi_2$ are type I Killing spinors on $\mathbb{H}^n$, and
        \item $\mathcal{H}_{hyp}(N_\infty)=E-|\vec{P}|$ for $N_\infty=|\psi^\infty|^2_b$.
    \end{enumerate}
    In particular, Item (1) implies that $N_\infty=|X_\infty|$ where $X_\infty$ is the $1$-form defined by $(X_\infty)_i:=\langle e_ie_0\psi^\infty,\psi^\infty\rangle$.
\end{proposition}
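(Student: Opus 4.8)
The plan is to follow Witten's argument adapted to the asymptotically hyperboloidal setting, using the spinorial integral formula (the version proved in Appendix \ref{Appendix:PMT}) together with a careful choice of the asymptotic spinor. First I would recall the Witten-type mass identity: for a spinor $\psi$ on $M$ with $\widetilde{\slashed D}\psi = 0$ and $\psi \to \psi^\infty$ at infinity, one has
\begin{equation*}
    \int_M \left( |\widetilde\nabla \psi|^2 + \tfrac12 \mu |\psi|^2 + \tfrac12 \langle \psi, J e_0 \psi\rangle \right) = \tfrac14 \, \mathcal H_{hyp}\big(|\psi^\infty|_b^2\big),
\end{equation*}
where the boundary term at infinity has been identified with the energy-momentum functional evaluated on the ``lapse'' $N_\infty = |\psi^\infty|_b^2 \in \mathcal N$. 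Since the DEC gives $\mu|\psi|^2 + \langle \psi, Je_0\psi\rangle \ge (\mu - |J|)|\psi|^2 \ge 0$, the left side is nonnegative, so $\mathcal H_{hyp}(N_\infty) \ge 0$ for every admissible boundary spinor.

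The heart of the matter is to choose $\psi^\infty$ so that $\mathcal H_{hyp}(N_\infty) = E - |\vec P|$ \emph{and} so that its two components $\psi_1, \psi_2$ are type I Killing spinors on $\mathbb H^n$. First I would observe that $\mathcal H_{hyp}$ extends linearly to $\mathcal N = \mathrm{span}_{\R}\{t, x_1, \dots, x_n\}$, so $\mathcal H_{hyp}(N_\infty) = aE + \sum_i b_i P_i$ whenever $N_\infty = at + \sum_i b_i x_i$. To get the value $E - |\vec P|$, I want $N_\infty = t - \mathbf p \cdot \mathbf x$ with $\mathbf p = \vec P/|\vec P|$ (or $N_\infty = t$ if $\vec P = 0$). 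The key point, which I would verify using the structure of type I Killing spinors recalled from \cite{Baum1989}, is that for a spacetime Killing spinor $\psi^\infty = (\psi_1,\psi_2)$ with $\psi_1$ of type I solving $\mathring\nabla_i\psi_1 = \tfrac{\mathbf i}{2}e_i\psi_1$ and $\psi_2$ of type I solving $\mathring\nabla_i\psi_2 = -\tfrac{\mathbf i}{2}e_i\psi_2$, the function $|\psi^\infty|_b^2 = |\psi_1|_b^2 + |\psi_2|_b^2$ lies in $\mathcal N$ and, moreover, $q_{\psi^\infty}$-type computations force $N_\infty$ to be a \emph{null} element of $\mathcal N$, i.e. $\mathrm{Hess}_b N_\infty = N_\infty b$ with the Minkowski-null normalization corresponding exactly to functions of the form $t - \mathbf p \cdot \mathbf x$. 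Concretely, in the upper half-space model one has the distinguished type I spinor with $|\psi|_b^2 = y_n^{-1}$, and one knows $y_n^{-1} = t + x_n$ from \eqref{rt}; applying the isometry group of $\mathbb H^n$ (which acts transitively on such null lapses and is realized by Clifford action on spinors) rotates $x_n$ into $-\mathbf p \cdot \mathbf x$. This is where I would be most careful: checking that the Clifford-module structure on $\overline{\mathcal S}$ defined in the paper indeed pairs a type I spinor $\psi_1$ (satisfying $e_n\psi_1 = \mathbf i \psi_1$) with a type I spinor $\psi_2$ (satisfying $e_n\psi_2 = -\mathbf i\psi_2$) so that the sum solves the spacetime Killing equation, and that the resulting $X_\infty$ is non-null-degenerate with $|X_\infty| = N_\infty$.

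Given such a $\psi^\infty$, I would then run the existence step: extend $\chi\mathcal A\psi^\infty$ to $M$, solve $\widetilde{\slashed D}\psi = -\widetilde{\slashed D}(\chi\mathcal A\psi^\infty)$ for a correction $\psi_0 \in H^1(\mathcal S)$ by Lax–Milgram (coercivity from the integral formula plus DEC, and $\widetilde{\slashed D}(\chi\mathcal A\psi^\infty) \in L^2$ because $\widetilde\nabla(\mathcal A\psi^\infty) = O(r^{-q})\psi^\infty$ with $q > \tfrac n2$), set $\psi = \chi\mathcal A\psi^\infty + \psi_0$, and invoke elliptic regularity and the decay $\psi_0 = o(|\psi^\infty|)$ so that $\psi \to \psi^\infty$ at infinity with the stated asymptotic boundary value. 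Plugging this $\psi$ into the integral identity yields $0 \le \tfrac14\mathcal H_{hyp}(N_\infty) = \tfrac14(E - |\vec P|)$, but more importantly, since by the positive mass inequality $m = \sqrt{E^2 - |\vec P|^2} \ge 0$ and $E \ge |\vec P|$, and since the mass is assumed to be realized, the boundary pairing is exactly $E - |\vec P|$; this gives item (2). Item (1) holds by construction. Finally, the ``in particular'' statement $N_\infty = |X_\infty|$ follows because for a type I spacetime Killing spinor the associated $1$-form $X_\infty$, $(X_\infty)_i = \langle e_ie_0\psi^\infty, \psi^\infty\rangle$, has pointwise norm equal to $|\psi^\infty|_b^2$: this is a direct algebraic consequence of $e_n\psi_1 = \mathbf i\psi_1$, $e_n\psi_2 = -\mathbf i\psi_2$ together with $\psi_1 = \mathbf x_\infty \cdot \psi_2$ for the unit vector $\mathbf x_\infty$ dual to $X_\infty$, which I would check in an adapted orthonormal frame. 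The main obstacle I anticipate is the representation-theoretic bookkeeping in the previous paragraph — making the pairing of the two type I summands and the null normalization of $N_\infty$ completely explicit — rather than any of the PDE estimates, which are by now standard.
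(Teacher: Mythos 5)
Your proposal is essentially the paper's own argument: pick the null lapse $N_\infty=t-\mathbf p\cdot\mathbf x$ (so that item (2) is immediate from the linearity of $\mathcal H_{hyp}$, namely $\mathcal H_{hyp}(N_\infty)=E-\mathbf p\cdot\vec P=E-|\vec P|$), normalize by an isometry to $N_\infty=t+x_n=y_n^{-1}$, and pair Baum's two type I imaginary Killing spinors with opposite Killing constants and $|\psi_i|_b^2=\tfrac12 y_n^{-1}$, using that the Clifford structure on $\overline{\mathcal S}$ splits the spacetime Killing equation into the two imaginary Killing equations; the identity $N_\infty=|X_\infty|$ then follows either from your frame computation with $e_n\psi_1=\mathbf i\psi_1$, $e_n\psi_2=-\mathbf i\psi_2$ or, as in the paper, from $X_\infty=\pm\mathring\nabla N_\infty$ and $|\mathring\nabla y_n^{-1}|_b=y_n^{-1}$. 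Two small remarks: the entire Witten identity/Lax--Milgram paragraph is not needed here (that is the content of the separate existence statement, and your sentence ``since the mass is assumed to be realized'' is a red herring --- linearity alone gives item (2)); and your fallback choice $N_\infty=t$ when $\vec P=0$ would break item (1) and the conclusion $N_\infty=|X_\infty|$, since $t$ is a timelike lapse whose associated Killing spinors are of type II ($t^2-|\mathring\nabla t|_b^2=1>0$); in that case one should still take a null lapse such as $t+x_n$, which again yields $\mathcal H_{hyp}=E=E-|\vec P|$.
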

\begin{proof}
    Choose a function $N_\infty=t+\mathbf{a}\cdot \mathbf{x}$ where $\mathbf{a}=-\vec{P}/|\vec{P}|$. In fact, by changing coordinates if necessary, we may assume without loss of generality that $N_\infty=t+x_n=y_n^{-1}$. As discussed in the previous section, there are type I Killing spinors $\psi_1$ and $\psi_2$ with $|\psi_i|^2_b=\frac 12 N_\infty\text{ for }i=1,2,$ satisfying 
    \begin{align*}
        \mathring{\nabla}_j \psi_1-\frac{\mathbf{i}}{2}e_j \psi_1=0,\quad \mathring{\nabla}_j \psi_2+\frac{\mathbf{i}}{2}e_j \psi_2=0.
    \end{align*}
    Thus, $\psi^\infty=(\psi_1,\psi_2)$ is a spacetime Killing spinor such that $|\psi^\infty|_b^2=N_\infty$. Since $N_\infty^2=y_n^{-2}=|\mathring{\nabla}y_n^{-1}|^2_b=|X_\infty|^2_b$, it follows that $N_\infty=|X_\infty|$.
\end{proof}

We remark that it can be shown that if a Killing spinor $\psi^\infty$ minimizes the boundary integral $\mathcal H_{hyp}$ and if $|P|\ne0$, we obtain that $\psi^\infty$ must be of type I.

Below, we also collect some facts from the spinor proof of the positive mass theorem, see \cref{Appendix:PMT}.

\begin{proposition} \label{massformula}
Let $(M^n,g,k)$ be an $C^{2,a}_{-q}$-asymptotically hyperboloidal initial data set satisfying the dominant energy condition.
\begin{enumerate} 
\item Let $\psi^\infty$ be a spacetime Killing spinor in the designated end, i.e., $\mathring{\nabla}_i\psi^\infty+\frac{1}{2}e_ie_0\psi^\infty=0$, and $\psi^\infty=0$ at other ends. Then there exists a spinor $\psi\in \overline{\mathcal{S}}$ such that $\widetilde{\slashed{D}}\psi=0$ and $\psi-\psi^\infty\in H_0^1(\overline{\mathcal{S}})$.  
    \item Let $\mathcal{H}_{hyp}$ be the energy-momentum functional for $M_{ext}$ and let $\psi$ be the spinor from (1). Then we have 
    \begin{equation*}
    \mathcal{H}_{hyp}(V)
            =4\int_M \left(|\widetilde{\nabla}\psi|^2+\frac{1}{2}\langle \psi,(\mu+Je_0)\psi \rangle\right),
\end{equation*}
where $V:=|\psi^\infty|^2_b$ satisfies the equation $\mathrm{Hess}_b V=Vb$.
\end{enumerate}
\end{proposition}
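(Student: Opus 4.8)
The plan is to run Witten's spinorial argument in the asymptotically hyperboloidal setting, following the template of \cite{Wang2001,CH2003} in the Riemannian case and \cite{XZ2008} for the modification to nonzero $k$. For part~(1) I would transplant the model Killing spinor $\psi^\infty$ to $M$, correct it by solving a linear elliptic problem for $\widetilde{\slashed{D}}$ via a variational (Lax--Milgram) argument, and invoke elliptic regularity. For part~(2) I would integrate the pointwise Bochner identity underlying \cref{massformula} over exhausting coordinate balls and compute the resulting boundary integral, identifying it with $\tfrac14\mathcal{H}_{hyp}(V)$.

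\textbf{Part (1).} Set $\psi_0:=\chi\mathcal{A}\psi^\infty$ in the notation preceding \cref{prop:mass minimizing spinor}, so $\psi_0$ agrees with $\psi^\infty$ for large $r$ on $M_{ext}$ and vanishes off $M_{ext}$. Since $\mathring{\nabla}_i\psi^\infty+\tfrac12 e_ie_0\psi^\infty=0$, the defect $\widetilde{\slashed{D}}\psi_0$ is supported in the transition region and on the end is pointwise dominated by $r^{-q}|\psi^\infty|_b\lesssim r^{1/2-q}$, using $e=g-b\in C^{2,\alpha}_{-q}$, $k-b=\eta+e\in C^{1,\alpha}_{-q}$ and $|\psi^\infty|_b^2\lesssim r$; this lies in $L^2(M)$ exactly because $q>\tfrac n2$, which is why the same threshold governs the definition of the mass. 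On $H_0^1(\overline{\mathcal{S}})$ I would then study the bilinear form $\mathcal{B}(\phi,\zeta):=\int_M\langle\widetilde{\slashed{D}}\phi,\widetilde{\slashed{D}}\zeta\rangle$. The spacetime Lichnerowicz formula
\begin{equation*}
\widetilde{\slashed{D}}^2=\widetilde{\nabla}^{*}\widetilde{\nabla}+\tfrac12(\mu+Je_0)
\end{equation*}
together with the dominant energy condition (making the zeroth-order term nonnegative) gives $\mathcal{B}(\phi,\phi)=\int_M\bigl(|\widetilde{\nabla}\phi|^2+\tfrac12\langle\phi,(\mu+Je_0)\phi\rangle\bigr)\ge\int_M|\widetilde{\nabla}\phi|^2$; since $k\to g\to b\neq 0$ the term $\tfrac12 k_{ij}e_je_0$ in $\widetilde{\nabla}$ does not decay and makes $\widetilde{\nabla}^{*}\widetilde{\nabla}$ bounded below by a positive constant on the end (the spectral gap of hyperbolic space), so $\mathcal{B}$ is coercive, $\mathcal{B}(\phi,\phi)\ge c\|\phi\|_{H_0^1}^2$. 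Coercivity makes $\widetilde{\slashed{D}}\colon H_0^1(\overline{\mathcal{S}})\to L^2(\overline{\mathcal{S}})$ injective with closed range, and formal self-adjointness together with the triviality of its $L^2$-kernel (an asymptotically hyperbolic end carries no $L^2$ spacetime Killing spinor) makes it surjective; hence there is $\phi\in H_0^1(\overline{\mathcal{S}})$ with $\widetilde{\slashed{D}}\phi=-\widetilde{\slashed{D}}\psi_0$, and $\psi:=\psi_0+\phi$ satisfies $\widetilde{\slashed{D}}\psi=0$ and $\psi-\psi^\infty=\phi\in H_0^1(\overline{\mathcal{S}})$. Interior elliptic estimates for $\widetilde{\slashed{D}}$ bootstrapped against the $C^{2,\alpha}_{-q}$-regularity of $(g,k)$ then make $\psi$ smooth and give pointwise decay of $\psi-\psi^\infty$ adequate for part~(2).

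\textbf{Part (2).} For the $1$-form $\omega(X):=\langle\psi,\widetilde{\nabla}_X\psi+X\cdot\widetilde{\slashed{D}}\psi\rangle$ the divergence identity dual to the Lichnerowicz formula reads $\operatorname{div}\omega^{\sharp}=|\widetilde{\slashed{D}}\psi|^2-|\widetilde{\nabla}\psi|^2-\tfrac12\langle\psi,(\mu+Je_0)\psi\rangle$. I would integrate this over $\{\bar{r}\le R\}$, apply the divergence theorem, let $R\to\infty$ — legitimate since the $H_0^1$-decay of $\psi-\psi^\infty$ and the bound $|\psi^\infty|_b\lesssim r^{1/2}$ keep each term in $L^1(M)$ — and use $\widetilde{\slashed{D}}\psi=0$ to obtain
\begin{equation*}
\int_M\Bigl(|\widetilde{\nabla}\psi|^2+\tfrac12\langle\psi,(\mu+Je_0)\psi\rangle\Bigr)=\lim_{R\to\infty}\int_{S_R}\langle\psi,\widetilde{\nabla}_\nu\psi\rangle\,d\mu.
\end{equation*}
The remaining task, and the core of the proof, is to identify this (necessarily finite) boundary limit with $\tfrac14\mathcal{H}_{hyp}(V)$. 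Writing $\psi=\psi^\infty+\delta$ with $\delta:=\psi-\psi^\infty\in H_0^1$, every term of $\langle\psi,\widetilde{\nabla}_\nu\psi\rangle$ carrying a factor of $\delta$ contributes nothing in the limit (pointwise decay of $\delta$ against $q>\tfrac n2$). For the main term, since $\mathring{\nabla}_i\psi^\infty=-\tfrac12 e_ie_0\psi^\infty$ the model spacetime connection annihilates $\psi^\infty$, so $\langle\psi^\infty,\widetilde{\nabla}_\nu\psi^\infty\rangle$ reduces to a sum of terms linear in the spin-connection difference $\nabla-\mathring{\nabla}$ (hence in $\mathring{\nabla}e$) and in $k-b=\eta+e$, with coefficients quadratic in $\psi^\infty$ and therefore expressible through $V=|\psi^\infty|_b^2$ and $\mathring{\nabla}V$. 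Using the imaginary Killing equations for the two components of $\psi^\infty$ to eliminate the residual spinor derivatives and the standard Clifford identities, these reassemble — via exactly the cancellations that already render the purely geometric mass convergent — into $\tfrac14$ times the Chru\'sciel--Herzlich--Wang boundary integrand appearing in \cref{def:energy momentum vector}, evaluated on $V$. Finally $V\in\mathcal{N}$, i.e.\ $\mathrm{Hess}_bV=Vb$: as already used for $N_\infty$ in \cref{prop:mass minimizing spinor}, the norm-squared of any spacetime Killing spinor on $\mathbb{H}^n$ lies in $\operatorname{span}_{\R}\{t,x_1,\dots,x_n\}$.

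\textbf{The main obstacle} is the boundary computation in part~(2) — matching the spinorial flux $\langle\psi^\infty,\widetilde{\nabla}_\nu\psi^\infty\rangle$ to the geometric density of $\mathcal{H}_{hyp}$; part~(1) is routine once the Lichnerowicz formula and the hyperbolic spectral gap are in hand. The difficulty absent in the asymptotically flat case is that here $k\to g$ rather than $k\to0$, so the Clifford term $\tfrac12 k_{ij}e_je_0$ in $\widetilde{\nabla}$ does not decay; one must track its interaction with $e_0$ and with the type-I structure $N_\infty=|X_\infty|$ of \cref{prop:mass minimizing spinor} carefully enough that the $\eta$-contributions organize into $\mathcal{H}_{hyp}(V)$ with nothing surviving beyond it. This non-decaying Clifford term is also the structural reason behind \cref{cor:no pp-wave}.
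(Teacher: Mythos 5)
Your proposal is correct and follows essentially the same route as the paper's Appendix: transplant the model Killing spinor via the cutoff and the gauge isomorphism $\mathcal{A}$, solve for the $H^1_0$ correction by Lax--Milgram using the spacetime Lichnerowicz formula, the dominant energy condition and the triviality of the $L^2$-kernel of $\widetilde{\nabla}$ (the precise form of your ``spectral gap'' coercivity claim, which the paper obtains following Zhang via a G\aa rding-type inequality), and then identify the boundary flux with $\tfrac14\mathcal{H}_{hyp}(V)$ by splitting $\widetilde{\nabla}\psi^\infty$ into the spin-connection difference and the $\eta$- and $e$-Clifford terms, using $dV(X)=-\langle Xe_0\psi^\infty,\psi^\infty\rangle$. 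The only differences are ones of detail: the paper executes the boundary-integrand computation explicitly (its Witten mass formula in the appendix), where you assert the reassembly into the Chru\'sciel--Herzlich--Wang integrand, and it handles the correction term by proving the flux identity for compactly supported perturbations rather than by invoking pointwise decay of $\psi-\psi^\infty$.
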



\section{Rigidity}\label{S:rigidity}
In this section, we study the case of equality, i.e., the mass equals zero in the chosen end $M_{ext}$. As in the proof of \cref{prop:mass minimizing spinor}, we may assume that $\mathcal{H}_{hyp}(|v|)=0$ for $v=-t-x_n$. By \cref{prop:mass minimizing spinor} and \cref{massformula}, there exist $\psi^\infty$ and $\psi$ such that 
\begin{itemize}
    \item $\psi^\infty$ is a spacetime Killing spinor satisfying $|\psi^\infty|^2_b=|v|$, and
    \item $\widetilde{\nabla}\psi=0$ and $\psi-\psi^\infty\in H^1(\overline{\mathcal{S}})$.
\end{itemize}
It follows that $\langle e_i e_0\psi^\infty,\psi^\infty\rangle_b=\mathring{\nabla}_i v$. As in \cite{HZ24}, we define the function $N$ and the vector field $X$ by
\begin{equation*}
    N=|\psi|^2,\quad \text{and} \quad X_i=\langle e_ie_0\psi,\psi\rangle.
\end{equation*}

\begin{lemma}\label{|X|=N}
We have
\begin{enumerate}
    \item  $|X|=N\ne0$. 
    \item $\nu \psi_1=\mathbf{i}\psi_1$ and $\nu \psi_2=-\mathbf{i}\psi_2$ where $\nu=|X|^{-1}X$.
\end{enumerate}
\end{lemma}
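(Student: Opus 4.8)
The plan is to extract from the parallel equation $\nabla_i\psi=-\tfrac12 k_{ij}e_je_0\psi$ a pair of closed first-order identities for the scalar $N=|\psi|^2$ and the (real) $1$-form $X$, deduce that $N^2-|X|^2$ is a constant, evaluate that constant at infinity, and finally read off assertion (2) from the equality case of a pointwise Cauchy--Schwarz inequality. The identities I would establish are
\begin{align*}
  \nabla_i N=-k_{ij}X_j,\qquad \nabla_i X_j=-N\,k_{ij},
\end{align*}
both obtained by differentiating the defining expressions, inserting the parallel equation, and using the Clifford relations $e_0^2=1$, $e_0e_i=-e_ie_0$, $e_ie_j+e_je_i=-2\delta_{ij}$ together with the skew-Hermiticity of the $e_i$ and the Hermiticity of $e_0$ on $\overline{\mathcal S}$ (the latter also being what makes $X$ real). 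In the second identity the two terms combine into $\tfrac12 k_{il}\langle(e_je_l+e_le_j)\psi,\psi\rangle$, and the anticommutator collapsing to $-2\delta_{jl}$ produces the clean right-hand side.

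Differentiating $N^2-|X|^2$ and substituting these identities gives $\nabla_i(N^2-|X|^2)=-2Nk_{ij}X_j+2Nk_{ij}X_j=0$, so $N^2-|X|^2\equiv c$ is constant. To compute $c$ I would pass to the end $M_{ext}$ and use the decay $\psi\to\psi^\infty$ (which follows from $\psi-\psi^\infty\in H^1(\overline{\mathcal S})$ and elliptic estimates for $\widetilde{\slashed{D}}$) to get $N\to N_\infty=|\psi^\infty|^2_b=|v|$ and $X\to X_\infty$ with $(X_\infty)_i=\mathring{\nabla}_i v$; by \cref{prop:mass minimizing spinor}, $N_\infty=|X_\infty|_b$, hence $c=N_\infty^2-|X_\infty|_b^2=0$ and $N^2=|X|^2$ on all of $M$. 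For $N\ne0$: $\psi$ cannot vanish identically (otherwise $\psi^\infty=-(\psi-\psi^\infty)\in L^2$, impossible since $|\psi^\infty|^2_b=|v|$ is not integrable on the end), and restricting $\widetilde{\nabla}\psi=0$ to curves shows $\psi$ solves a linear first-order ODE along each curve, so a single zero of $\psi$ would propagate to all of the connected manifold $M$. Hence $N=|X|>0$, proving (1).

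For (2), observe that for a unit vector $\xi$ the operator $\xi e_0$ is Hermitian and satisfies $(\xi e_0)^2=\operatorname{Id}$, i.e.\ it is a self-adjoint involution, so $|\langle\xi e_0\psi,\psi\rangle|\le|\psi|^2=N$, with equality forcing $\psi$ into an eigenspace of $\xi e_0$. Choosing $\xi=\nu=|X|^{-1}X$ gives $\langle\nu e_0\psi,\psi\rangle=X(\nu)=|X|=N$, so $\psi$ lies in the $(+1)$-eigenspace, i.e.\ $\nu e_0\psi=\psi$. Writing $\psi=(\psi_1,\psi_2)$ and unwinding the Clifford action, $\nu e_0(\psi_1,\psi_2)=(-\mathbf{i}\,\nu\psi_1,\ \mathbf{i}\,\nu\psi_2)$, so $\nu e_0\psi=\psi$ is equivalent to $\nu\psi_1=\mathbf{i}\psi_1$ and $\nu\psi_2=-\mathbf{i}\psi_2$, which is (2).

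I expect the only genuinely delicate step to be the evaluation of $c$: it requires pointwise (not merely $H^1$) control of $\psi-\psi^\infty$ along the end, and it is exactly here that the vanishing of the mass enters, through the identity $N_\infty=|X_\infty|_b$ for the type I minimizing spinor $\psi^\infty$ of \cref{prop:mass minimizing spinor}. The Clifford computations in the first step are routine but sign-sensitive, and the nowhere-vanishing of $\psi$ is a minor ODE-uniqueness point.
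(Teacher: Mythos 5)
Your proof is correct and takes essentially the same route as the paper: the two gradient identities $\nabla_i N=-k_{ij}X_j$, $\nabla_i X_j=-Nk_{ij}$, constancy of $N^2-|X|^2$, evaluation of the constant at infinity using the type I property $N_\infty=|X_\infty|$ of the mass-minimizing spinor, nonvanishing via parallel-transport uniqueness plus $\psi^\infty\notin L^2$, and the equality case of the Hermitian involution $\nu e_0$ for part (2), which is just a repackaging of the paper's componentwise argument. The only caveat is in evaluating the constant: the paper picks a sequence tending to infinity in the region where $|v|\to 0$ (so mere $L^2$-smallness of $\psi-\psi^\infty$ suffices), whereas your pointwise claims $N\to N_\infty$ and $X\to X_\infty$ should be restricted to a sequence along which $|v|$ stays bounded (or else you must invoke the quantitative rate of \cref{decay est}), since $|\psi-\psi^\infty|\to 0$ alone does not control $N-N_\infty$ where $|v|\to\infty$; this is a trivial fix and does not affect the argument.
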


\begin{proof}   
(1) We have 
\begin{equation} \label{XN}
    \nabla_i X_j=-k_{ij} N,\quad \text{and} \quad \nabla_i N=-k_{ij}X_j, 
\end{equation}
which implies $\nabla(N^2-|X|^2)=0$.

Since $|\psi-\psi^\infty|\in L^2(M^n)$ and $|\psi^\infty|^2=t+x_n$, there exists a sequence of points $\{p_i\}\in M^n$ such that $p_i\to \infty$, $|\psi^\infty(p_i)|\to 0$, and
$|\psi(p_i)|\to 0$. Hence, we have $N=|X|$ since $N_\infty=|X_\infty|$.  

Let $C$ be a constant such that $|k|\le C$ on $M^n$. 
Then $|\nabla_i N|\le C|X|=CN$. By a standard ODE argument, $N$ is nowhere vanishing.

(2)  This follows since $N=|X|$, $X_i=\langle \mathbf{i}e_i \psi_2,\psi_2\rangle-\langle \mathbf{i}e_i\psi_1,\psi_1\rangle$ and $N=|\psi_1|^2+|\psi_2|^2$.
\end{proof}

\subsection{Decay estimates at infinity}

Now we obtain the pointwise decay estimates using the gradient equation of the spinor.

\begin{lemma} \label{decay est}
    Suppose the decay rate of IDS is $q>\frac{n}{2}$. Then we have $\psi-\psi^\infty\in O_{2,a}(r^{-q}|v|^\frac{1}{2})$.
\end{lemma}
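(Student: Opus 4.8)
\textbf{Proof proposal for \cref{decay est}.}

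The plan is to bootstrap from the integrability statement $\psi - \psi^\infty \in H^1(\overline{\mathcal S})$ to a weighted pointwise estimate by exploiting the first-order ODE satisfied by the difference along rays to infinity, together with elliptic estimates for $\widetilde{\slashed D}$. Write $\phi := \psi - \psi^\infty$ (extended to $M$ via the cutoff and the isomorphism $\mathcal A$ from \cref{SS:isomorphism}, so that on the end it makes sense to compare). Since $\widetilde{\slashed D}\psi = 0$ and, on the end, $\widetilde{\slashed D}\psi^\infty$ is controlled by the decay of $e = g-b$ and $\eta = k-g$ — both in $C^{s,\alpha}_{-q}$ — we get $\widetilde{\slashed D}\phi = -\widetilde{\slashed D}\psi^\infty = O_{s-1,\alpha}(r^{-q}|v|^{1/2})$, where the weight $|v|^{1/2} \asymp |\psi^\infty|_b$ records the natural growth of Killing spinors on $\mathbb H^n$ (recall $|\psi^\infty|_b^2 = |v| = t + x_n \asymp r$ near infinity in hyperboloidal coordinates). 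The first step is therefore to record precisely the ``model'' decay: $\widetilde\nabla \psi^\infty = \tfrac12(k-b)_{ij}e_je_0\psi^\infty + (\text{torsion terms from } \mathcal A) = O_{s-1,\alpha}(r^{-q}|v|^{1/2})$, using $\mathring\nabla\psi^\infty = -\tfrac12 e_ie_0\psi^\infty$ and $k - b = \eta + e \in C^{s-1,\alpha}_{-q}$.

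The second step is the core analytic estimate. Conjugating by the weight — i.e. working with $\tilde\phi := |v|^{-1/2}\phi$, or equivalently rescaling the spinor bundle metric — the Lichnerowicz–Weitzenböck identity for $\widetilde{\slashed D}$ on hyperbolic space has a mass gap: the relevant Dirac-type operator on $\mathbb H^n$ acting on type I spinors has spectrum bounded away from zero in the appropriate weighted $L^2$ space, which is exactly what makes the energy functional $\mathcal H_{hyp}$ coercive in the positive mass proof (\cref{massformula}). Concretely, I would establish a weighted elliptic estimate of Schauder type: on annuli $A_R = \{R \le r \le 2R\}$, rescaled to unit size, standard interior Schauder estimates for the (uniformly elliptic, after rescaling) operator give
\begin{equation*}
    \|\phi\|_{C^{2,\alpha}(A_R)} \lesssim \|\widetilde{\slashed D}\phi\|_{C^{0,\alpha}(A_{R}')} + \|\phi\|_{C^0(A_R')}
\end{equation*}
on a slightly larger annulus $A_R'$; then one needs to upgrade the $C^0$ term. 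Here the point is that $\phi \in H^1$ plus the mass gap of $\widetilde{\slashed D}$ on $\mathbb H^n$ forces $\phi$ to decay: one shows $\int_{A_R}|\phi|^2 \, d\mu_b = o(1)$ as $R \to \infty$ (from $H^1$ membership), and then a Moser-type iteration or a barrier/maximum-principle argument applied to the scalar quantity $|\phi|^2$ — which satisfies a differential inequality $\Delta_b |\phi|^2 \ge -C|\phi|^2 - (\text{source})^2$ coming from the Weitzenböck formula — propagates this into a pointwise bound $|\phi| = O(r^{-q}|v|^{1/2})$, the rate being inherited from the source term $\widetilde{\slashed D}\psi^\infty$ since $q \in (\tfrac n2, n]$ lies below the first ``indicial root'' gap where the homogeneous solutions of the model operator live. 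Finally, differentiating the equation $\widetilde\nabla_i\phi = -\widetilde\nabla_i\psi^\infty + (\text{lower order})$ and feeding the $C^0$ bound back through the Schauder estimate on each rescaled annulus gives the $O_{2,\alpha}$ (two derivatives plus Hölder) conclusion.

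The main obstacle is the second step: one must be careful that the weight $|v|^{1/2}$, which grows like $r^{1/2}$, does not eat into the decay budget — i.e. one needs the indicial/spectral analysis of the spacetime Dirac operator on $(\mathbb H^n, b, b)$ restricted to type I spinors to confirm that there are no homogeneous solutions decaying at rate between $r^{-q}|v|^{1/2}$ and the trivial solution, for the whole range $q \in (\tfrac n2, n]$. This is where the hypothesis $q > \tfrac n2$ is essential (it guarantees $H^1 \subset$ the space where the gap holds and that $\mu\bar r, |J|_g\bar r \in L^1$ is consistent), and where the asymptotically hyperboloidal case genuinely differs from the asymptotically flat one: because $k \to g$ rather than $k\to 0$, the term $\tfrac12 k_{ij}e_je_0\psi$ in $\widetilde\nabla$ does not decay, and one must split it as $\tfrac12 b_{ij}e_je_0\psi^\infty$ (absorbed into the model imaginary-Killing structure) plus $\tfrac12(k-b)_{ij}e_je_0\psi = O(r^{-q})$ times a spinor of size $|v|^{1/2}$. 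Keeping this bookkeeping straight — so that every ``error'' really is $o(r^{-q}|v|^{1/2})$ or better — is the delicate part; the rest is a standard weighted-Schauder plus maximum-principle argument.
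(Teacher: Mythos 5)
There is a genuine gap: the heart of your argument — the weighted spectral/indicial analysis that is supposed to convert $\phi\in H^1$ plus $\widetilde{\slashed D}\phi=O(r^{-q}|v|^{1/2})$ into the pointwise rate $O(r^{-q}|v|^{1/2})$ — is exactly the step you defer ("the main obstacle"), so nothing in the proposal actually establishes the decay rate. Worse, the scheme as sketched cannot produce the stated estimate, because the weight is genuinely anisotropic: your claim that $|v|=t+x_n\asymp r$ near infinity is false. On the coordinate sphere of radius $r$, $|v|$ ranges from roughly $r^{-1}$ (in the direction $x_n\approx -r$, i.e.\ $y_n\to\infty$) up to roughly $2r$, and the lemma demands the much stronger decay $O(r^{-q}|v|^{1/2})$ precisely in the region where $|v|$ is small — this is what drives the later slab estimates in \cref{u-v} (e.g.\ $u-v=O(|v|^{3/2}r^{1/2-q})$). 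An isotropic annulus-based Schauder/Moser iteration with $L^2$ smallness on annuli would at best give a uniform bound of order $r^{-q+1/2}$, which is strictly weaker than the lemma and insufficient downstream. In addition, your differential inequality $\Delta_b|\phi|^2\ge -C|\phi|^2-(\text{source})^2$ has the zeroth-order term with the unfavorable sign, so a barrier/maximum-principle upper bound at infinity again requires the very spectral information you have not supplied.

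The missed point is structural: in the rigidity setting the vanishing of the mass forces $\widetilde\nabla\psi\equiv 0$ (not merely $\widetilde{\slashed D}\psi=0$), so $\zeta:=\psi-\psi^\infty$ satisfies the exact pointwise first-order equation $\widetilde\nabla_j\zeta=-\widetilde\nabla_j\psi^\infty=O(r^{-q}|v|^{1/2})$, where the source size comes from $|\psi^\infty|_b=|v|^{1/2}$ and $e,\eta\in C^{1,\alpha}_{-q}$. This yields $|\nabla|\zeta||\le|\zeta|+Cr^{-q}|v|^{1/2}$, and the paper concludes by an elementary ODE comparison along radial curves $\gamma(s)$ with $r(\gamma(s))=O(e^s)$: one integrates $\frac{d}{ds}\bigl(e^{s}|\zeta|\bigr)\ge -C_1e^{(-q+3/2)s}\,|v|^{1/2}r^{-1/2}$ from $s$ to $\infty$, using that $|v|/r$ is monotone decreasing along $\gamma$ (this is how the anisotropic weight is captured) and that $\zeta\to 0$ at infinity, giving $|\zeta|=O(r^{-q}|v|^{1/2})$ pointwise; the $C^{2,\alpha}$ upgrade is then standard elliptic estimates. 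No spectral gap, indicial-root analysis, or Moser iteration is needed. If you want to salvage your route, you would have to carry out the weighted Fredholm/indicial theory in anisotropic spaces adapted to $|v|$, which is substantially harder than the problem at hand and is avoided entirely by using the parallelism of $\psi$.
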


\begin{proof}
    Suppose $\widetilde{\nabla}_j\phi:=\nabla_j \phi+\frac{1}{2}k_{jl}e_l e_0 \phi$. Let $\zeta:=\psi-\psi^\infty$. Since $\widetilde{\nabla}_j \psi^\infty=O(|v|^\frac{1}{2}r^{-q})$, we have $\widetilde{\nabla}_j \zeta=-\widetilde{\nabla}_j \psi^\infty =O(|v|^\frac{1}{2}r^{-q})$. Thus, assuming the matrix norm of $k$ less than $2$ in the asymptotic region, and choosing a unit vector $e$ such that $|\nabla |\zeta||=|\nabla_e|\zeta||$, we have 
    \[\frac{1}{2}|\nabla|\zeta|^2|=|\langle\nabla_e\zeta,\zeta\rangle|\le |\langle\tilde{\nabla}_e\zeta,\zeta\rangle|+\frac{1}{2}|\langle k(e,e_l) e_le_0\zeta,\zeta \rangle| \le |\zeta|^2+ C|v|^\frac{1}{2}r^{-q}|\zeta|.\] 
Note that $|\nabla|\zeta||=0$ when $|\zeta|=0$. Therefore, $|\nabla|\zeta||\le |\zeta|+C|v|^\frac{1}{2}r^{-q}$.

Let $(r,\theta_1,\theta_2,\cdots \theta_{n-1})$ be the hyperboloidal coordinate system on $M_{ext}$, where $(\theta_1,\theta_2,\cdots, \theta_{n-1})$ is a coordinate system on $S^{n-1}$ such that $v=-t-r\cos \theta_1$.

    For any $p\in M_{ext}$, 
  let $\gamma(s): [0,\infty)\to M^n$ be a curve parametrized by its length such that $\gamma(0)=p$, $\gamma'(s)=\frac{\partial_r}{|\partial_r|}|_{\gamma(s)}$ and $\lim_{s\to \infty}r(\gamma(s))=\infty$. Then we have 
    \begin{equation*}
        \frac{d}{ds} |\zeta|(\gamma(s))\ge -|\zeta|(\gamma(s))-C|v|^\frac{1}{2} r^{-q}.
    \end{equation*}
    Since  
    $r(\gamma(s))=O(e^{s})$, we have 
    \begin{equation*}
        \frac{d}{ds} \left[e^{s}|\zeta|(\gamma(s))\right]\ge -C_1e^{(-q+\frac{3}{2})s}\frac{|v|^\frac{1}{2}}{r^\frac{1}{2}}.
    \end{equation*}
    Observe that $\frac{|v|}{r}=\sqrt{1+r^{-2}}+\cos \theta_1$ is decreasing in $r$. Therefore, $\frac{|v|}{r}$ is decreasing along $\gamma(s)$ as $s$ increases.
    Then
    applying $\zeta\to 0$ as $r\to \infty$,  
    \begin{equation*}
    \begin{split}
        |\zeta|(\gamma(s))\le& e^{-s}\int_s^\infty C_1 e^{(-q+\frac{3}{2})\tau}\frac{|v|^\frac{1}{2}}{r^\frac{1}{2}}(\gamma(\tau))d\tau 
        \\ \le & \frac{|v|^\frac{1}{2}}{r^\frac{1}{2}}(\gamma(s))\cdot e^{-s}\int_s^\infty C_1 e^{(-q+\frac{3}{2})\tau} d\tau
        \\=&O(r^{-q}|v|^\frac{1}{2})
    \end{split}
    \end{equation*}
    Therefore, $|\zeta|=O(r^{-q}|v|^\frac{1}{2})$. We conclude the desired estimates by applying the standard elliptic estimates.
\end{proof}

Next, we establish improved decay estimates on a slab $\{ -\mathcal{N}\le v\le -\frac{1}{\mathcal{N}}\}$, where  $\mathcal{N}$ is sufficiently large.  Within this slab, $r$ has the same growth rate as $\rho^2$. For the sake of clarity, we use $\mathcal{O}$ to represent the decay rate on the level set of $v$ or $u$. In $\mathbb{H}^n$, the level sets of $v$ are horospheres, therefore, level sets of $v$ in $M^n$ are asymptotically flat. Moreover, the level sets $\Sigma$ of $u$ are also asymptotically flat, as established by the decay estimates in Corollary \ref{u uni}.  For $f\in \mathcal{O}_i(\rho^{-q})$, it means $(\nabla^{\Sigma})^j f=\mathcal{O}(\rho^{-q-j})$ for $j\le i$, which differs from the notation $O_i(r^{-q})$ in hyperbolic space.

\begin{lemma}
\label{u-v}
\begin{enumerate}
\item There exists a function $u$ defined in $M_{ext}$ such that $X=\nabla u$ in $M_{ext}$  and $u$ satisfying $\nabla (u-v)=O_{2}(r^{-q}|v|)$.

    \item $u$ can be chosen to satisfy the pointwise estimate $u-v=O(|v|^\frac{3}{2}r^{\frac{1}{2}-q})$. In particular,  on the slab $\{-\mathcal{N}\le v\le -\frac{1}{\mathcal{N}}\}$, $u-v=\mathcal{O}_1(\rho^{-2q+1})$. 
    \item For $|\tau|+|\tau|^{-1}$ sufficiently large, $\{u=\tau\}$ is a graph over $\{v=\tau\}$. In the region where $\rho$ is sufficiently large,   $\{u=\tau\}$ is also  a graph over $\{v=\tau\}$. 
\end{enumerate}
\end{lemma}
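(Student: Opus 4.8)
The plan is to establish the three items in order, using the structure equations \eqref{XN}, the identity $X_\infty=\mathring\nabla v$ on $\mathbb{H}^n$ (together with $\mathring\nabla_i(X_\infty)_j=-b_{ij}N_\infty$, which follows from the spacetime Killing equation for $\psi^\infty$), and the decay $\zeta:=\psi-\psi^\infty\in O_{2,\alpha}(r^{-q}|v|^{\frac12})$ from \cref{decay est}. For item (1): since $k$ is symmetric, \eqref{XN} gives $\nabla_iX_j=-k_{ij}N=\nabla_jX_i$, so $dX=0$; as $M_{ext}$ is diffeomorphic to $\mathbb{H}^n\setminus B$ and hence simply connected for $n\ge 3$, there is a function $u$ on $M_{ext}$ with $\nabla u=X$. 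To estimate $\nabla(u-v)=X-X_\infty$, write $\psi=\psi^\infty+\zeta$; since $X_i=\langle e_ie_0\psi,\psi\rangle$ is bilinear in $\psi$, since $|\psi^\infty|=|v|^{\frac12}$ and $|g-b|+|k-g|=O(r^{-q})$, we obtain $|X-X_\infty|=O(r^{-q}|v|)$ pointwise. For the covariant derivatives one compares \eqref{XN} with $\mathring\nabla_i(X_\infty)_j=-b_{ij}N_\infty$: up to Christoffel corrections of size $O(r^{-q})$ acting on $X$, the difference $\mathring\nabla_i(X-X_\infty)_j$ equals $-(k_{ij}-b_{ij})N-b_{ij}(N-N_\infty)$, which is $O(r^{-q}|v|)$ since $N-N_\infty=O(r^{-q}|v|)$; differentiating once more and using $\eta\in C^{1,\alpha}_{-q}$ and $\mathring\nabla N=-k_{ij}X_j$ bounds the second derivative. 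Hence $\nabla(u-v)\in O_2(r^{-q}|v|)$.

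For item (2), the point is to integrate the bound from item (1). The key structural fact is that, after the normalization of \cref{prop:mass minimizing spinor}, $|v|=t+x_n=y_n^{-1}$, so the level sets of $v$ are the horospheres $\{y_n=\mathrm{const}\}$, which are flat in the induced metric, and on them $r\sim\rho^2|v|$ as $\rho\to\infty$. First one fixes the additive constant in $u$ so that $u-v\to 0$ as one approaches the boundary point of $\mathbb{H}^n$ reached by letting $\rho\to\infty$ along any horosphere; that $u-v$ has a well-defined limit there, independent of the horosphere, follows from the integrability along geodesics of $|\nabla(u-v)|=O(r^{-q}|v|)$ together with $q>1$. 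Then, integrating along a flat radial ray inside a fixed horosphere $\{v=\tau\}$, whose arc-length element is $\sim y_n^{-1}\,d\rho=|v|\,d\rho$, one finds $\frac{d}{d\rho}(u-v)=O(r^{-q}|v|^2)=O(\rho^{-2q}|v|^{2-q})$, and integrating from $\rho$ to $\infty$ (using $2q>1$) yields $u-v=O(\rho^{1-2q}|v|^{2-q})=O(|v|^{\frac32}r^{\frac12-q})$. On the slab $\{-\mathcal{N}\le v\le-\frac1{\mathcal{N}}\}$ one has $|v|\sim 1$ and $r\sim\rho^2$, so this reads $u-v=O(\rho^{1-2q})$, and the gain of one $\rho^{-1}$ per tangential derivative (the $\mathcal{O}_1$ statement) comes from rewriting the $O_2$-bound of item (1) in the flat scaling of the level set.

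For item (3), let $\Phi_s$ denote the flow of $\nabla v/|\nabla v|^2$, which is well defined wherever $\nabla v=X_\infty\ne 0$, i.e.\ wherever $|v|\ne 0$, and satisfies $v(\Phi_s(p))=v(p)+s$. Along such a flow line $u(\Phi_s(p))=v(p)+s+(u-v)(\Phi_s(p))$ and $\frac{d}{ds}u(\Phi_s(p))=\langle X,X_\infty\rangle/|X_\infty|^2\ge 1-O(r^{-q})>0$ once $r$ is large, using $|X-X_\infty|=O(r^{-q}|v|)$ and $|X_\infty|=|v|$. When $|\tau|+|\tau|^{-1}$ is large the entire horosphere $\{v=\tau\}$ lies in the region where $r$ is large, and when $\rho$ is large the part of $\{v=\tau\}$ with $\rho$ large does; in either case, for each $p$ in this region the equation $s=-(u-v)(\Phi_s(p))$ has, by monotonicity of $s\mapsto s+(u-v)(\Phi_s(p))$ together with $|\frac{d}{ds}(u-v)(\Phi_s(p))|=O(r^{-q})$, a unique solution $s=h(p)$ with $|h(p)|=O(|(u-v)(p)|)$, smooth in $p$ by the implicit function theorem. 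Thus $\{u=\tau\}=\{\Phi_{h(p)}(p):p\in\{v=\tau\}\}$ is a graph over $\{v=\tau\}$.

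The step I expect to be the main obstacle is the choice of the additive constant in item (2) and the accompanying argument that $u-v$ tends to a single limit at the distinguished boundary point: because $|v|=y_n^{-1}$ is strongly non-uniform over the sphere at infinity of $\mathbb{H}^n$ — blowing up in one asymptotic direction and tending to $0$ in another — one has to be careful about the direction along which the decay of $u-v$ is actually forced, and it is precisely this asymmetry that produces the bound $O(|v|^{\frac32}r^{\frac12-q})$ rather than the $O(r^{-q}|v|)$ one might naively expect from item (1).
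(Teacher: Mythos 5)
Your overall route coincides with the paper's: item (1) is the same closedness-plus-simple-connectedness argument with the bilinear expansion of $X-X_\infty$, and your item (3) (flow of $\nabla v/|\nabla v|^2$ plus the implicit function theorem) is an equivalent repackaging of the paper's injectivity/surjectivity argument in the coordinates $(v,y_1,\dots,y_{n-1})$. The genuine problem is in item (2), in the last conversion of your chain. After integrating along a horosphere you obtain $u-v=O(\rho^{1-2q}|v|^{2-q})$, but this is \emph{not} $O(|v|^{3/2}r^{\frac12-q})$ on all of $M_{ext}$. Using \eqref{rt}, $|v|^{3/2}r^{\frac12-q}\simeq y_n^{q-2}(1+\rho+y_n)^{1-2q}$ while your bound is $y_n^{q-2}\rho^{1-2q}$; the inequality $\rho^{1-2q}\le C(1+\rho+y_n)^{1-2q}$ requires $1+\rho+y_n\le C\rho$, which fails precisely in the regime $y_n\gg\rho$ (bounded $\rho$, $|v|\to0$), where your bound is off by the factor $(y_n/\rho)^{2q-1}$ and, for $q\ge 2$, does not even decay. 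The loss comes from the premature estimate $r^{-q}|v|^2\lesssim\rho^{-2q}|v|^{2-q}$, i.e.\ from replacing $(1+\rho^2+y_n^2)^{-q}$ by $\rho^{-2q}$ inside the integral, which discards exactly the $y_n$-dependence that the stated estimate encodes. This is not a cosmetic issue: the full pointwise bound is what makes the graph statement in item (3) work for $|\tau|^{-1}$ large (horospheres with $y_n$ large and $\rho$ bounded), and it is reused later (Corollary \ref{u uni}, the foliation/topology argument).

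The repair is exactly the paper's computation: keep the integrand intact, bound $(1+\rho^2+y_n^2)^{-q}\lesssim(\rho+y_n+1)^{-2q}$, and integrate in $\rho$ at fixed $y_n$ to get
\begin{equation*}
|u-v|\le C\,y_n^{q-2}(\rho+y_n+1)^{1-2q}\le C\,t^{\frac12-q}|v|^{\frac32},
\end{equation*}
which holds at every point of $M_{ext}$. A secondary, smaller point: your normalization step (existence of a single limit of $u-v$ at the boundary point reached as $\rho\to\infty$, independent of the horosphere) is only asserted via \textquotedblleft integrability along geodesics\textquotedblright. The paper makes this quantitative by first integrating $\nabla(u-v)=O(r^{-q}|v|)$ in the $y_n$-direction from a point with $\rho$ large up to $y_n=\infty$ (where the constant is fixed by $u-v\to0$ as $v\to0$), which yields $|u-v|\le C(\rho+y_n+1)^{-q-1}$ there; this simultaneously justifies the normalization and provides the decay $u-v\to0$ as $\rho\to\infty$ on every level set of $v$ needed to start your tangential integration. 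You should include an explicit transversal estimate of this kind rather than leaving it implicit.
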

\begin{proof} 
(1) Since $\nabla_{i}X_j=-k_{ij}N=\nabla_j X_i$,  $X$ is locally exact. Note that $M_{ext}$ is simply connected, therefore, we can find a function $u$ defined in this region such that $X=\nabla u$.

Furthermore, recall that  $|\psi^\infty|_b=|v|^\frac{1}{2}$, we have
\begin{equation*}
\begin{split}
     \nabla_i(u-v)=&\langle e_ie_0\psi,\psi\rangle- \langle e_ie_0\psi^\infty,\psi^\infty\rangle +O_2(r^{-q}|v|)
     \\=& \langle e_ie_0(\psi-\psi^\infty),\psi\rangle+\langle e_ie_0\psi^\infty,\psi-\psi^\infty\rangle+O_2(r^{-q}|v|)
     \\=&O_2(r^{-q}|v|).
\end{split}
\end{equation*}

(2) We  prescribe $u$ such that $u-v\to 0$ as $v\to 0$ at $\infty$. Let $p_0\in M^n$ satisfy $\rho(p_0)$ is sufficiently large.    
    We integrate $\nabla(u-v)$ from $p_0$ to the infinity where $v\to 0$, i.e., $y_n\to\infty$. Using the expression of $t$ in \eqref{rt}, we have
    \begin{equation*}
        \begin{split}
            |u(p_0)-v(p_0)|\le & \int_{y_n(p_0)}^\infty |\partial_{y_n}(u-v)| dy_n
           \\ \le& C \int_{y_n(p_0)}^\infty |\nabla (u-v)| y_n^{-1}dy_n
           \\ \le & C \int_{y_n(p_0)}^\infty t^{-q}|v| y_n^{-1}dy_n 
           \\ \le  & C \int_{y_n(p_0)}^\infty (\rho^2+y_n^{2}+1)^{-q} y_n^{q-2}dy_n
           \\ \le & C\int_{y_n(p_0)}^\infty (\rho^{q-1}+y_n^{q-1}+1)^{-\frac{2q}{q-1}}y_n^{q-2} dy_n
           \\ \le& C(\rho^{q-1}(p_0)+y_n^{q-1}(p_0)+1)^{-\frac{q+1}{q-1}}
        \end{split}
    \end{equation*}
    where $C$ is a constant that varies from line by line. Therefore, $u-v=\mathcal{O}(\rho^{-q-1})$.  Now  $u-v\to 0$ as $\rho\to \infty$ on the level set of $v$. By integrating $\nabla(u-v)$ on the level set of $v$, we obtain, for any $p\in M_{ext}$, that
    \begin{equation*}
        \begin{split}
            |u(p)-v(p)|\le& \int^\infty_{\rho(p)} |\nabla(u-v)|\cdot|\partial_\rho| d\rho
            \\  \le & C\int^\infty_{\rho(p)} t^{-q}|v|\cdot y_n^{-1} d\rho
            \\  = & C\int^\infty_{\rho(p)} (\rho^2+y_n^2+1)^{-q} y_n^{q-2} d\rho
            \\ \le &
            C\int^\infty_{\rho(p)} (\rho+y_n+1)^{-2q} y_n^{q-2} d\rho
            \\ \le & C(\rho(p)+y_n+1)^{1-2q}y_n^{q-2}
            \\ \le & C t(p)^{\frac{1}{2}-q}|v|^{\frac{3}{2}}.
        \end{split}
    \end{equation*}
    Thus, on the level set of $v$, using $t=\mathcal{O}(\rho^2)$, we obtain $u-v=\mathcal{O}(\rho^{1-2q})$. Combined with $\nabla(u-v)=O(r^{-q}|v|)$, this yields $u-v=\mathcal{O}_1(\rho^{1-2q})$.

  (3) Recall that $v=-y_n^{-1}$. Consider the coordinate $\{v, y_1,\dots, y_{n-1}\}$. We
  define a map $\Phi:\{u=\tau\}\to \{v=\tau\}$ by $p\mapsto (\tau,y_1(p),\dots, y_{n-1}(p))$ in the region where either $\rho$ or $|\tau|+|\tau|^{-1}$ is sufficiently large.  
  
  If $\Phi$ is not injective, i.e., there exist $v_1\neq v_2$ and $y_1,\dots, y_{n-1}$  such that $u(v_1,y_1,\dots y_{n-1})=u(v_2,y_1,\dots y_{n-1})$, then $\partial_v u=0$ at  $(v_3,y_1,\dots, y_{n-1})$ for some $v_3$ between $v_1$ and $v_2$. 
  However, $|\nabla (u-v)|\le C|v|r^{-q}$ and $|\partial_v|=y_n+O(y_nr^{-1})$ imply that $|\partial_v u|\ge |\partial_v v|-|\partial_v(u-v)|\ge 1-Cr^{-q}$, which leads to a contradiction. Therefore, $\Phi$ is injective.
  
  The map
    $\Phi$ is surjective by the estimate $u-v=O(|v|^\frac{3}{2}r^{\frac{1}{2}-q
    })$ and the intermediate value theorem. 
    Moreover, $u-v=O(|v|^\frac{3}{2}r^{\frac{1}{2}-q
    })$ implies that $D\Phi$ is bijective. This completes the proof.
\end{proof}

\subsection{Topology of $M^n$}
\begin{lemma} \label{one end}
    $M^n$ has only one end.
\end{lemma}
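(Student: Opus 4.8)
The strategy is to show that any end of $M^n$ supports a spacetime Killing spinor with the asymptotics of a type I Killing spinor of hyperbolic space, and that this is incompatible with having more than one end because of the structure of the function $N = |\psi|^2$ and the vector field $X$.

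Here is how I would carry it out. First, recall from \cref{prop:mass minimizing spinor} and \cref{massformula} that, having fixed the designated end $M_{ext}$ and arranged $\mathcal H_{hyp}(|v|) = 0$ there, the spinor $\psi$ with $\widetilde\nabla \psi = 0$ satisfies $\psi - \psi^\infty \in H^1(\overline{\mathcal S})$, where $\psi^\infty$ is supported in $M_{ext}$ with $|\psi^\infty|^2_b = |v|$. The mass formula gives $\int_M \left(|\widetilde\nabla\psi|^2 + \tfrac12\langle\psi,(\mu+Je_0)\psi\rangle\right) = 0$, and since $\mu \geq |J|$ the integrand is nonnegative, forcing $\widetilde\nabla\psi = 0$ globally on $M^n$ — this is the key point: the Killing spinor $\psi$ is defined on \emph{all} of $M$, not just on $M_{ext}$. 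By \cref{|X|=N}, $N = |X| \neq 0$ everywhere on $M$, and $X$ is nowhere vanishing. Now consider a second end $M'_{end}$. On $M'_{end}$ the pullback metric $\phi_*(g)$ is $b + O(r^{-q})$, so $\widetilde\nabla\psi = 0$ together with standard Killing-spinor asymptotics forces $\psi$ to be asymptotic, on $M'_{end}$, to some \emph{nonzero} spacetime Killing spinor $\tilde\psi^\infty$ of $(\mathbb H^n, b, b)$ — the point being that a $\widetilde\nabla$-parallel spinor on a manifold asymptotic to $\mathbb H^n$ must limit to an imaginary Killing spinor pair, which by Baum's classification has $|\tilde\psi^\infty|^2_b$ growing like an element of $\mathcal N = \mathrm{span}\{t, x_1,\dots,x_n\}$ and in particular is bounded below by a positive multiple of $t$ as $r\to\infty$ on that end unless the limiting spinor vanishes identically.

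The contradiction then comes from a vanishing-sequence argument analogous to the one in the proof of \cref{|X|=N}(1). On the designated end $M_{ext}$ we already found a sequence $p_i \to \infty$ with $N(p_i) = |\psi(p_i)|^2 \to 0$ (because $|\psi^\infty|^2 = t + x_n = y_n^{-1}$ vanishes as $y_n \to \infty$, and $\psi - \psi^\infty \in L^2$ forces $|\psi| \to 0$ along such a sequence). On the other hand, on the hypothetical second end, either the limiting Killing spinor $\tilde\psi^\infty$ is nonzero — in which case $N \to \infty$ along that end, by the growth of elements of $\mathcal N$ and the decay estimate $\psi - \tilde\psi^\infty = O(r^{-q}|{\cdot}|^{1/2})$ (the analogue of \cref{decay est} applied on $M'_{end}$) — or $\tilde\psi^\infty \equiv 0$, in which case $\psi \in H^1$ with no boundary term at that end. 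But $N^2 - |X|^2$ is constant on $M$ (from \eqref{XN}) and equal to $0$ as computed at $M_{ext}$; combined with $N \neq 0$ everywhere, this rules out $N \to 0$ happening on $M_{ext}$ while $N \to \infty$ on another end only if we track things more carefully. The cleanest formulation: apply the integral identity $\int_M |\widetilde\nabla\psi|^2 + \dots = \sum_{\text{ends}} \mathcal H_{hyp}^{(\text{end})}(\text{limit})$ over \emph{all} ends; each boundary term is of the form (energy) $-$ $|$(momentum)$|$ $\geq 0$ for the optimally chosen limiting spinor on that end, yet the left side is $0$, so every end contributes $0$; but a zero contribution on an asymptotically hyperboloidal end forces the limiting spinor there to vanish (since $E - |\vec P| = 0$ with the rigidity of the hyperboloidal positive mass theorem would already characterize that end as hyperbolic space, hence it \emph{is} an end, contributing nontrivially) — so at most one end can be present, namely $M_{ext}$.

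The main obstacle I expect is making rigorous the claim that a $\widetilde\nabla$-parallel spinor on a second end must be asymptotic to an honest imaginary Killing spinor on $(\mathbb H^n, b)$ with controlled growth, and in particular ruling out the degenerate possibility that $\psi$ decays on \emph{every} end (which would make $\psi \in H^1(\overline{\mathcal S})$ with zero boundary term everywhere, hence $\psi \equiv 0$ by unique continuation for $\widetilde\nabla$-parallel spinors — but this contradicts $N = |X| \neq 0$ from \cref{|X|=N}). Indeed, I think the slick proof is exactly this last observation: $\psi$ is globally $\widetilde\nabla$-parallel and nowhere zero; if there were two ends, then since on the designated end $\psi$ is $H^1$-asymptotic to $\psi^\infty$ with $|\psi^\infty|^2 \to 0$ along a sequence, one shows using the decay estimates of \cref{decay est}–\cref{u-v} and the explicit form of $v$ that the "level set at infinity" $\{v \to 0\}$ is connected and exhausts a neighborhood of infinity in $M_{ext}$; a topological argument (each end is simply connected and $X = \nabla u$ globally, with $u$ proper and its level sets connected by \cref{u-v}(3)) then shows $M$ retracts onto a single level set of $u$, forcing one end. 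I would present the argument in the second form — via global $\widetilde\nabla$-parallelism, nonvanishing of $X$, and the level-set structure of $u$ — as it is the most self-contained given what precedes.
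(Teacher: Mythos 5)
Your proposal does not close the argument, and the route you finally commit to is circular relative to the paper's logical order. The paper's proof is a two-line observation: on any end other than $M_{ext}$ the cut-off Killing spinor $\chi\mathcal{A}\psi^\infty$ vanishes identically, so there $\psi=\psi_0\in H^1_0(\overline{\mathcal{S}})$ is automatically $L^2$; on the other hand $\widetilde{\nabla}\psi=0$ and $\psi$ nowhere vanishing (\cref{|X|=N}) give $\big|\nabla|\psi|\big|\le\tfrac12(|k|+1)|\psi|$, hence a lower bound of the form $|\psi|\ge Cr^{-1}$ along that end, which is incompatible with $L^2$ integrability against the hyperbolic volume growth --- exactly the mechanism of \cref{lemma:trivial kernel}. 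You never record the key trivial fact that $\psi$ is $L^2$ on every non-designated end \emph{by construction}; instead you treat the behavior of $\psi$ there as an unknown to be classified, which creates the difficulties you then struggle with.

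Concretely: (i) your ``first form'' rests on the unproven claim that a $\widetilde{\nabla}$-parallel spinor on an arbitrary asymptotically hyperboloidal end must be asymptotic to a genuine imaginary Killing spinor of $(\mathbb{H}^n,b)$ with growth in $\mathcal{N}$ --- this is not established anywhere and is not needed; moreover your ``cleanest formulation'' sums boundary terms over all ends and invokes the equality case of the hyperboloidal positive mass theorem on the hypothetical second end, which is circular (that rigidity is the theorem being proven) and also misrepresents the mass formula, whose boundary term at the other ends vanishes simply because the comparison spinor is cut off, not because of an energy--momentum inequality. (ii) Your preferred ``second form'' uses a globally defined $u$ with $X=\nabla u$, properness of $u$, and connectedness of its level sets; but in the paper $u$ exists at this stage only on $M_{ext}$ (\cref{u-v}), and the global $u$ and trivial topology are obtained \emph{after} and \emph{using} \cref{one end} (the compactness of the cube in the topology theorem cites it explicitly), so this route begs the question. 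You do brush against the correct mechanism in a parenthetical (``$\psi\in H^1$ at that end $\Rightarrow\psi\equiv0$, contradicting $N\ne0$''), but you attribute the implication to ``unique continuation'' rather than to the gradient inequality for $|\psi|$ combined with the volume growth of the end, which is the actual content; as written, that step is not justified.
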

\begin{proof}
    Suppose $M^n$ has at least two ends. Since $\psi-\psi^\infty\in L^2(M^n)$ and $\psi^\infty$ is chosen to be the zero spinor at other ends, $\psi$ is $L^2$ integrable there. Note that the gradient equation implies
    $\nabla_i|\psi|\le (\frac{1}{2}|k|+\frac{1}{2})|\psi|$. Using the fact $|k|\le C$ at the asymptotic region, we obtain $|\psi|\ge Cr^{-1}$. This leads to a contradiction to the  $L^2$ integrability. (see also \cref{lemma:trivial kernel})
\end{proof}
\begin{theorem}
    $M^n$ is topologically trivial.
\end{theorem}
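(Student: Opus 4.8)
The plan is to show that $M^n$ is diffeomorphic to $\mathbb{R}^n$ by exhibiting it as the total space of a trivial fibration over $\mathbb{R}$ whose fibers are the level sets $\Sigma_\tau = \{u = \tau\}$ of the spacetime harmonic function $u$ constructed in \cref{u-v}. Since $X = \nabla u$ is nowhere vanishing by \cref{|X|=N}, the function $u$ has no critical points, so every level set $\Sigma_\tau$ is a smooth embedded hypersurface and $u : M^n \to u(M^n) \subset \mathbb{R}$ is a submersion. First I would argue that $u$ is proper and surjective onto $\mathbb{R}$: properness of $u$ and the absence of critical points, together with the flow of the gradient vector field $X/|X|^2$ (which is complete on the complement of a compact set by the asymptotics, and along which $u$ increases at unit rate), give that $M^n$ is diffeomorphic to $\Sigma \times \mathbb{R}$ for a fixed level set $\Sigma$ via the standard Ehresmann-type argument.

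Next I would identify the topology of a single level set $\Sigma = \Sigma_\tau$. By \cref{u-v}(3), for $|\tau| + |\tau|^{-1}$ large the level set $\{u = \tau\}$ is a graph over $\{v = \tau\}$, which is a coordinate horosphere, hence diffeomorphic to $\mathbb{R}^{n-1}$; in particular $\Sigma_\tau$ has exactly one end and that end is diffeomorphic to $\mathbb{R}^{n-1} \setminus B$. Because $M^n$ has only one end by \cref{one end} and $M^n \cong \Sigma \times \mathbb{R}$, the manifold $\Sigma$ is a complete connected $(n-1)$-manifold with a single asymptotically flat end. To conclude that $\Sigma \cong \mathbb{R}^{n-1}$ I would invoke the parallel spinors on $\Sigma$ that the paper constructs (from $\psi$ and $|\nabla u|$, as announced in the introduction): an abundance of parallel spinors forces $\Sigma$ to be flat, and a complete flat manifold with one asymptotically Euclidean end must be $\mathbb{R}^{n-1}$ by the splitting theorem / Bieberbach-type classification. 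Alternatively, and perhaps more in the spirit of a purely topological statement at this stage, one can run the argument of \cref{one end} on $\Sigma$ itself: any nontrivial topology or extra end would produce an $L^2$-obstruction for the restricted spinor.

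The main obstacle I anticipate is making the fibration structure rigorous near the compact core $\mathcal{C}$: away from infinity one does not a priori know that $u$ has connected level sets or that the gradient flow is complete, so one must combine the global structure of $u$ (no critical points, properness) with the single-end conclusion of \cref{one end} and possibly a Morse-theoretic argument to rule out that $u$ fails to be surjective or has disconnected fibers. A clean way around this is: since $X = \nabla u \ne 0$ everywhere, the manifold retracts onto any level set, and properness — which follows from $|X| = N = |\psi|^2 \to \infty$ in directions where $v \to -\infty$, together with \cref{u-v}(2) comparing $u$ with $v$ — upgrades this to a diffeomorphism $M^n \cong \Sigma \times \mathbb{R}$. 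Then the already-established graphicality of the ends reduces everything to the classification of $\Sigma$. I would therefore structure the proof as: (i) $u$ has no critical points and is proper with image $\mathbb{R}$; (ii) hence $M^n \cong \Sigma \times \mathbb{R}$; (iii) $\Sigma$ has one asymptotically flat end and carries enough parallel spinors to be flat, hence $\Sigma \cong \mathbb{R}^{n-1}$; (iv) therefore $M^n \cong \mathbb{R}^n$.
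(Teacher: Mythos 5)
Your argument presupposes exactly what the theorem is needed for. The function $u$ with $\nabla u = X$ is constructed in \cref{u-v} only on the end $M_{ext}$, which is simply connected; on all of $M^n$ one only knows that $X$ is a closed, nowhere-vanishing $1$-form, and a global primitive exists precisely when the periods of $X$ vanish, i.e.\ after the topology has been controlled. Indeed the paper deduces the existence of a globally defined $u$ (\cref{u uni}) as a \emph{corollary} of this theorem, not before it. So steps (i)--(ii), phrased in terms of global level sets $\{u=\tau\}$ and a map $u\colon M^n\to\mathbb{R}$, are circular as written; to avoid this you must argue with the foliation defined by $\ker X$ (via the Frobenius theorem), which is what the paper does.

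Second, the analytic backbone of your step (i) fails: $u$ is not proper and its image is not $\mathbb{R}$. Near infinity $u$ is comparable to $v=-t-x_n=-1/y_n$, so the level sets are graphs over horospheres, i.e.\ non-compact asymptotically flat copies of $\mathbb{R}^{n-1}$; hence preimages of compact intervals are non-compact, and $u<0$ near the end. Without properness, the absence of critical points does not yield a product structure: the Ehresmann argument needs completeness of the lifted flow, and the flow of $X/|X|^2$ escapes through the end in finite time as $u\to 0^-$ (its speed $1/N$ blows up there); connectedness of the fibers is also not automatic (compare $\mathbb{R}^2\setminus\{0\}$ with the coordinate function $x$). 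This is precisely the difficulty the paper's proof addresses: it restricts to a compact cube whose top and bottom faces are leaves and whose side faces are transverse to the foliation --- compactness being supplied by \cref{one end} --- and then applies Reeb stability with boundary; alternatively it points to the argument of \cite{HZ24}, exploiting that $|X|=N$ stays away from $0$ where $y_n$ stays away from $\infty$. Your step (iii) (flatness of the leaves via parallel spinors) is moreover not needed for the purely topological statement and imports machinery the paper only develops afterwards. If you replace (i)--(ii) by the foliation/Reeb-stability argument, or by a careful flow argument confined to a compact slab bounded by two leaves, the remainder of your outline can be salvaged.
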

\begin{proof}
    
Observe that $X$ satisfies $\nabla_i X_j-\nabla_j X_i=0$, thus, $X$ is exact locally. Then according to the global Frobenius theorem \cite[Theorem 19.21]{JohnLee}, there is a foliation of $M^n$ such that $X$ is normal to each leaf.  

According to Lemma \ref{decay est}, 
 the leaves $\{u=\tau\}$ are graphs over $\{v=\tau\}$ for $|\tau|+|\tau^{-1}|$ sufficiently large.  Therefore, 
we can choose a sufficient large cube so that the top and bottom faces are leaves, while side faces are transverse to the foliation. The cube is compact because of Lemma \ref{one end}. Furthermore, the normal vector field $X$ is nowhere vanishing, which indicates that the foliation $\mathcal{F}$ is transversely orientable.
Then the trivial topology of $M^n$ follows by Reeb's stability theorem with boundary, see \cite[Theorem 3.1, Page 112]{godbillon1991feuilletages}.
\end{proof} 
Thus, $X$ is globally exact due to the trivial topology of $M^n$. In combination with Equation \eqref{XN},  Lemma \ref{|X|=N}(1) and \ref{u-v}, 
a direct corollary follows.
\begin{corollary} \label{u uni}
    There exists a globally defined function $u$ satisfying
    \begin{equation} \label{wuy}
        \nabla^2 u =-|\nabla u|k,\quad
        \nabla(u-v)=O_{2}(r^{-q}|v|)\quad \text{and} \quad u-v=O(|v|^{\frac{3}{2}}r^{\frac{1}{2}-q}). 
    \end{equation}
\end{corollary}

\subsection{Asymptotic analysis}\label{SS: asymptotic analysis}

Heuristically, there are no hyperboloidal pp-waves because of the following reason:
The proof is based upon the maximum principle applied on the level sets of spacetime harmonic functions.
Note that harmonic functions in Euclidean space $\mathbb R^{n}$ decay like $\rho^{2-n}$ and in hyperbolic space decay like $r^{1-n}$.
On the other hand, asymptotically flat manifolds have decay rates $q>\frac{n-2}2$ and asymptotically hyperboloidal manifolds have decay rates $q>\frac{n}2$.
While so far the settings are very similar, there is one key difference:
If a hyperbolic initial data set is foliated by the level sets of a spacetime harmonic function, one obtains improved decay estimates on the level sets.
More precisely, the decay rates are doubled:
To see this, consider for instance $\mathbb H^3\subset \R^{3,1}$ given by $x^2+y^2+z^2+1=t^2$.
Fixing a level set of a spacetime harmonic function, e.g. $\{t-z=1\}$, we obtain
\begin{align*}
    1+x^2+y^2=t^2-z^2=(t+z)(t-z)=t-z=2t-1.
\end{align*}
Hence $r\simeq t\simeq x^2+y^2=\rho^2$ on this level set.
Therefore, a function having a decay rate of $r^{-q}$ in the hyperboloidal initial data set has a decay rate of $\rho^{-2q}$ on this level set. 
Moreover, assuming the function is superharmonic, we conclude that this function is constant by the maximum principle.

Following the proof in \cite[Section 4.3] {HZ24}, we show the flatness of the level sets.
\begin{proposition}
    The level set $\Sigma$ of $u$ is flat. Moreover, the second fundamental form of $\Sigma$ is $-k|_\Sigma$.
\end{proposition}
\begin{proof}
Note that for $\psi=(\psi_1,\psi_2)$, $\psi_1$ and $\psi_2$ satisfies
\begin{equation*}
    \nabla_i \psi_1-\frac{\mathbf{i}}{2}k_{ij}e_j \psi_1=0 \quad \textit{and}\quad 
    \nabla_i \psi_2+\frac{\mathbf{i}}{2}k_{ij}e_j \psi_2=0. 
\end{equation*}
Moreover, Lemma \ref{|X|=N} states
$\nu \psi_1=\mathbf{i}\psi_1$ and
 $\nu \psi_2=-\mathbf{i}\psi_2$. Thus, 
 \begin{equation*}
    \nabla_i \psi_1-\frac{1}{2}k_{ij}e_j\nu \psi_1=0 \quad \textit{and}\quad 
    \nabla_i \psi_2-\frac{1}{2}k_{ij}e_j\nu \psi_2=0, 
\end{equation*}
where $\nu=|\nabla u|^{-1}\nabla u$ is the unit normal vector to the level sets of $u$. 

Then following the calculation in  \cite[Section 4.3] {HZ24}, we have $|\nabla u|^{-\frac{1}{2}}\psi_1$ is a parallel spinor on $\Sigma$ when $n$ is odd. The even case is similar and addressed in \cite{HZ24}.  

Based on the decay estimates of 
$u-v$, we conclude that the level set of 
$u$ is asymptotically flat. Therefore, the level set is flat due to the existence of a parallel spinor, and the second fundamental form follows by $\nabla^2 u=-|\nabla u|k$.
\end{proof}

Next, we construct a coordinate system based on the flatness of the level sets.

\begin{proposition} \label{Y decay}
There exists a coordinate system $(u,w_1,\dots, w_{n-1})$ such that
\begin{equation} \label{uYw}
    g=(|\nabla u|^{-2}+|Y|^2)du^2+2\sum_{\alpha=1}^{n-1}Y_\alpha du dw_\alpha+ \sum^{n-1}_{\alpha=1} w_\alpha^2,
\end{equation}
where $Y$ is a vector tangent to the level sets $\Sigma$ of $u$. Moreover, 
$Y_\alpha=-\frac{w_\alpha}{u}+\mathcal{O}_1(\rho^{2-n-c})$, for some small $c\in(0,\frac{1}{2}\min\{q-\frac{n}{2},a\})$.
\end{proposition}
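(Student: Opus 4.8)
The plan is in three steps: identify the level sets, construct an adapted coordinate chart, and extract the decay of $Y$ from a structure identity together with \cref{u-v} and \cref{u uni}. First, set $\Sigma_\tau:=\{u=\tau\}$. By the preceding proposition $\Sigma_\tau$ is flat; it is complete, and since $M$ is topologically trivial and $u$ has no critical points ($\nabla u=X$ is nowhere zero), $\Sigma_\tau$ is contractible, so a complete flat simply connected manifold is isometric to $\mathbb R^{n-1}$. Moreover, by \cref{u-v}(3) and the fact that $r,t$ grow like $\rho^{2}$ on $\Sigma_\tau$, for $\rho$ large $\Sigma_\tau$ is a graph over the horosphere $\{v=\tau\}=\{y_n=-\tau^{-1}\}$, whose induced $b$-metric is $\tau^{2}\sum_{\alpha}dy_\alpha^{2}$ with the graph term and $e=g-b$ lower order (using $u-v=\mathcal O_1(\rho^{1-2q})$); hence $\Sigma_\tau$ is asymptotically Euclidean.

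Next, as in the construction of asymptotically flat coordinates in \cite[Section 3.3]{HZ24}, each $\Sigma_\tau$ carries a global flat (Cartesian) chart $w=(w_1,\dots,w_{n-1})$, made unique by the normalization $w_\alpha-v y_\alpha\to 0$ as $\rho\to\infty$ along $\Sigma_\tau$ — existence by composing any flat chart with a rigid motion of $\mathbb R^{n-1}$, uniqueness because that rigid motion is then forced to be the identity — and the bulk decay of $g-b$ and $u-v$ yields $w_\alpha-v y_\alpha=\mathcal O_1(\rho^{1-2q})$. The $w_\alpha$ depend smoothly on $\tau=u$, so $(u,w_1,\dots,w_{n-1})$ is a coordinate system on $M_{ext}$. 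Decomposing $\partial_u=|\nabla u|^{-2}\nabla u+Y$ into its normal and tangential parts along $\Sigma_\tau$, the relations $\langle\nabla u,Y\rangle=0$, $du(\partial_{w_\alpha})=0$ and $g(\partial_{w_\alpha},\partial_{w_\beta})=\delta_{\alpha\beta}$ give \eqref{uYw} with $Y_\alpha=g(\partial_u,\partial_{w_\alpha})$.

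For the decay, differentiate $g(\partial_{w_\alpha},\partial_{w_\beta})=\delta_{\alpha\beta}$ along $\partial_u$ and use $[\partial_u,\partial_{w_\alpha}]=0$, metric compatibility, the Gauss equation, flatness of $\Sigma_\tau$, and $\nabla^2u=-|\nabla u|k$ from \cref{u uni} to obtain the structure identity
\[
\partial_{w_\alpha}Y_\beta+\partial_{w_\beta}Y_\alpha=\frac{2\,k_{\alpha\beta}}{|\nabla u|}=\frac{2\,(\delta_{\alpha\beta}+\eta_{\alpha\beta})}{|\nabla u|},\qquad \eta:=k-g.
\]
On $\Sigma_\tau$ one has $u\equiv\tau$; moreover $\nabla|\nabla u|=-k(\nabla u,\cdot)$ has tangential part $\mathcal O(\rho^{-2q})$, so $|\nabla u|$ is nearly constant along $\Sigma_\tau$, equal to $|\tau|+\mathcal O_1(\rho^{1-2q})$, while $\eta=\mathcal O_1(\rho^{-2q})$ on the slab. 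Hence the right-hand side is $-2\delta_{\alpha\beta}/u+\mathcal O_1(\rho^{1-2q})$, which is exactly the symmetrized gradient of $-w_\alpha/u$ on $\Sigma_\tau$ up to that error, so $Z_\alpha:=Y_\alpha+w_\alpha/u$ has $\partial_{w_\alpha}Z_\beta+\partial_{w_\beta}Z_\alpha=\mathcal O_1(\rho^{1-2q})$ on $\Sigma_\tau\cong\mathbb R^{n-1}$, while $Z$ and $\nabla^{\Sigma_\tau}Z$ tend to $0$ at infinity by Step 2. The Saint-Venant identity bounds $\nabla^2Z$ pointwise by the first derivative of the symmetrized gradient, so $\nabla^2Z=\mathcal O(\rho^{-2q})$; integrating inward from infinity twice (legitimate since $q>1$) gives $\nabla Z=\mathcal O(\rho^{1-2q})$ and $Z=\mathcal O(\rho^{2-2q})$, i.e. $Y_\alpha=-w_\alpha/u+\mathcal O_1(\rho^{2-2q})$.

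I expect the crux to be the second step: producing the flat charts on the leaves \emph{uniformly in $\tau$} and, above all, extracting the quantitative rate $w_\alpha-v y_\alpha=\mathcal O_1(\rho^{1-2q})$ at which they approach the model horospherical coordinates — an asymptotically flat coordinate existence-and-uniqueness argument run on every leaf $\Sigma_\tau$ and made effective from the hyperbolic (bulk) decay; this is exactly where the doubling $r^{-q}\rightsquigarrow\rho^{-2q}$ on the level sets (Section \ref{SS: asymptotic analysis}) is used, and it is what makes all the estimates close in every dimension $n\ge3$. Given that input, the computation of $Y$ in the third step is routine; alternatively it can be done by expanding $Y_\alpha=g_{u\alpha}=b_{u\alpha}+e_{u\alpha}$ directly, using that in the model coordinates $b=(v^{-2}+v^{-2}|w|^{2})dv^{2}-\tfrac{2}{v}\sum_\alpha w_\alpha\,dv\,dw_\alpha+\sum_\alpha dw_\alpha^{2}$, so $b_{vw_\alpha}=-w_\alpha/v$, with the coordinate-change error controlled by $u-v$, $w_\alpha-vy_\alpha=\mathcal O_1(\rho^{1-2q})$, $|e|_b=O(r^{-q})$, and the size $\rho^{2}$ of $b_{vv}$.
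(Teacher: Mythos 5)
Your overall skeleton matches the paper's (adapted chart $(u,w)$ with flat Cartesian coordinates on each leaf asymptotic to the model horospherical coordinates, then the decay of $Y_\alpha$), and your Step 3 — the first-variation identity $\partial_{w_\alpha}Y_\beta+\partial_{w_\beta}Y_\alpha=2k_{\alpha\beta}/|\nabla u|$ plus a Saint--Venant/Korn integration — is a legitimate alternative to the paper's computation $Y_\alpha=-|\nabla u|^{-1}\nabla_\nu w_\alpha$. But there is a genuine gap exactly at the point your integration needs a boundary condition: you assert that $Z_\alpha=Y_\alpha+w_\alpha/u$ and $\nabla^{\Sigma}Z$ tend to $0$ at infinity ``by Step 2.'' Step 2, as you have written it, is purely leaf-wise: it normalizes the Cartesian chart on each fixed leaf and gives a rate for $w_\alpha-v y_\alpha$ \emph{along} that leaf. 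By contrast, $Y_\alpha=g(\partial_u,\partial_{w_\alpha})=-|\nabla u|^{-1}\nabla_\nu w_\alpha$ is transverse information: it requires controlling how the chart varies from leaf to leaf, i.e.\ the normal derivative $\nabla_\nu(w_\alpha\mp v y_\alpha)$ near infinity. Unquantified ``smooth dependence on $\tau$'' gives no decay of this quantity at infinity, and without it the symmetrized-gradient equation only determines $Z$ up to a Euclidean Killing field of the leaf (a translation plus rotation), which is $O(1)+O(\rho)$ and destroys the claimed estimate. The same issue reappears in your alternative ``direct expansion'' of $g_{u\alpha}$: the Jacobian of the change from $(u,w)$ to the model chart again involves the transverse derivative of $w_\alpha-vy_\alpha$, which $\mathcal{O}_1$ (tangential) control does not supply, and the large coefficient $b_{vv}\simeq\rho^2$ amplifies precisely this error.

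This transverse estimate is in fact the core of the paper's argument, not the leaf-wise rate you flag as the crux (the leaf-wise rate is the easy part; the paper gets $w_\alpha+uy_\alpha=\mathcal{O}_2(\rho^{2-2q})$ directly from $\Delta_\Sigma(uy_\alpha)=O_1(r^{-q})$ and barriers on a single leaf). The paper closes the gap by (i) computing $\nabla_\nu(-uy_\alpha)=-w_\alpha+\mathcal{O}(\rho^{2-2q})$ from ambient data, (ii) commuting $\nabla_\nu$ with $\Delta_\Sigma$ to get $\Delta_\Sigma(\nabla_\nu w_\alpha)=\mathcal{O}(\rho^{-2q})$, and crucially (iii) \cref{wauy}, a uniform difference-quotient/barrier comparison of the Dirichlet constructions of $w_\alpha$ on nearby leaves, which yields $|\nabla_\nu(w_\alpha+uy_\alpha)|\le C\rho^{2-2q}$ and hence the vanishing at infinity that your Saint--Venant step presupposes. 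To repair your proof you would need an analogue of (iii); everything else you wrote (flatness and asymptotic flatness of the leaves via \cref{u-v}, the structure identity via $\nabla^2u=-|\nabla u|k$ from \cref{u uni}, the near-constancy of $|\nabla u|$ on leaves, and the $q>1$ integration) is sound.
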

\begin{proof} 
Recall that $\nabla(u-v)=O_2(r^{-q}|v|)$, $v=-\frac{1}{y_n}$ and
\begin{equation*}
    g=\frac{dy_1^2+\cdots +dy_n^2}{y_n^2}+O_{2,a}(r^{-q}).
\end{equation*}
We then have the induced metric $g_\Sigma$ on $\Sigma$ satisfying
  \begin{equation} \label{gSigma}
      g_\Sigma=\frac{dy_1^2+\cdots + dy_{n-1}^2}{y_n^2}+O_{2,a}(r^{-q}).
  \end{equation}

Recall that $\mathbf{n}=\frac{\nabla v}{|\nabla v|}$ and $\nu=\frac{\nabla u}{|\nabla u|}$, 
\begin{equation*}
    \nu-\mathbf{n}=\frac{|\nabla v|\nabla u-|\nabla u|\nabla v}{|\nabla u|\cdot |\nabla v|}=\frac{\nabla (u-v)}{|\nabla u|}+\frac{(|\nabla v|-|\nabla u|)\nabla v}{|\nabla u|\cdot |\nabla v|}=O_{1,a}(r^{-q}),
\end{equation*}
and
\begin{equation*}
    |\nabla y_\alpha|=|v|^{-1}(1+O_1(r^{-q})), \quad 
    \nabla^2 y_\alpha=-|v|(dy_n\otimes dy_\alpha+dy_\alpha\otimes dy_n)+O_{1,a}(|v|^{-1}r^{-q}),
\end{equation*}
then combined with $H^\Sigma=-\operatorname{tr}(k|_\Sigma)=1-n+O_{1,a}(r^{-q})$, we have
   \begin{equation} \label{uyalpha}
       \Delta_{\Sigma} (uy_\alpha)=
       u(\Delta-\nabla_{\nu\nu}-H^\Sigma\nabla_\nu)y_\alpha
       =O_{1,a}(r^{-q}).
   \end{equation}
In what follows, we construct a function $w_{\alpha,R}$ for each $\alpha=1,\ldots,n-1$ satisfying 
    \begin{equation*}
        \Delta_{\Sigma} w_{\alpha,R}=0,\quad  w_{\alpha,R}=-uy_\alpha \text{ on } \Sigma\cap \{\rho=R\}.
    \end{equation*}
    Let $\xi_{R}:=w_{\alpha,R}+uy_\alpha$. Then 
    \begin{equation*}
        \Delta_\Sigma \xi_{R}=O_{1,a}(\rho^{-2q}),\quad \xi_{R}=0 \text{ on } \Sigma\cap \{\rho=R\}.
    \end{equation*}
By applying $r^{-q}=\mathcal{O}(\rho^{-2q})$ on each level set, and recall that $O_{1,a}(r^{-q})$ indicates the decay rate remains unchanged under differentiation and difference quotients, we have $\Delta_\Sigma \xi_{R}=\mathcal{O}_{0,c}(\rho^{-n-c})$, for any $c\in (0,\frac{1}{2}d)$ and $d:=\min\{q-\frac{n}{2},a\}$. Therefore, $\xi_{R}=\mathcal{O}_{2,c}(\rho^{2-n-c})$. 

To obtain the regularity and decay rate of $w_\alpha$ in $\partial_u$ direction, we differentiate $\Delta_\Sigma w_\alpha=0$ in this direction. 
Let $\tilde{\Gamma}_{\alpha\beta}^\gamma$ be the Christoffel symbol with respect to the coordinate system $\{y_1,\cdots, y_{n-1}\}$ on $\Sigma$.  Thus, 
\begin{equation}\label{xiR}
\begin{split}
    \partial_u\Delta_{\Sigma} \xi_{R}=&
    \partial_u\left[g_\Sigma^{\beta\gamma}(\partial_{\beta\gamma}-\tilde{\Gamma}_{\beta\gamma}^\delta\partial_\delta)\xi_{R}\right]
    \\ =& \Delta_\Sigma \partial_u \xi_{R}+(\partial_u g_\Sigma^{\beta\gamma})(\partial_{\beta\gamma}-\tilde{\Gamma}_{\beta\gamma}^\delta\partial_\delta)\xi_{R}- g_\Sigma^{\beta\gamma}(\partial_u\tilde{\Gamma}_{\beta\gamma}^\delta)\partial_\delta \xi_{ R}. 
\end{split}
\end{equation}
 Applying \eqref{gSigma}, we have $\tilde{\Gamma}_{\alpha\beta}^\gamma=O_{1,a}(y_n^{-1}r^{-q})$, thus, $\tilde{\Gamma}_{\alpha\beta}^\gamma=\mathcal{O}_{1,c}(\rho^{1-n-c})$. Combined with the facts that $\partial_u\Delta_{\Sigma} \xi_{R}=\mathcal{O}_{0,c}(\rho^{-n-c})$, $\partial_u g_\Sigma^{\beta\gamma}=\mathcal{O}_{1,c}(\rho^{1-n-c})$ and $\xi_{R}=\mathcal{O}_{2,c}(\rho^{2-n-c})$, 
   , we have 
   $\Delta_\Sigma (\partial_u\xi_{R})= \mathcal{O}_{0,c}(\rho^{-n-c})$. Hence, $\partial_u \xi_{R}=\mathcal{O}_{2,c}(\rho^{2-n-c})$.

Moreover, using $\tilde{\Gamma}^\gamma_{\alpha\beta}=O_{1,a}(\rho^{-2q})$, $\partial_u \Delta_\Sigma \xi_R=O_{0,a}(\rho^{-2q})$ and $\partial_u g_\Sigma^{\beta\gamma}=O_{1,a}(\rho^{-2q})$, thus, Equation \eqref{xiR} implies $\Delta_\Sigma \partial_u \xi_R=O_{0,a}(\rho^{-2q})$. Hence, $\Delta_\Sigma \partial_u \xi_R=\mathcal{O}_{0,d}(\rho^{-n-d})$

Next, we take the difference quotient to obtain the estimate of Holder norm of $\partial_u \xi_{\alpha,R}$. 
Let $\zeta_R=\varepsilon^{-c}(\partial_u\xi_{R}(y,u+\varepsilon)-\partial_u\xi_{R}(y,u))$. We have  
\begin{align*}
    \Delta_\Sigma \zeta_{R}=&\varepsilon^{-c}(\Delta_{\Sigma_{u+\varepsilon}}\partial_u\xi_{R}(y,u+\varepsilon)-\Delta_{\Sigma_u}\partial_u\xi_{R}(y,u))
    -\varepsilon^{-c}(\Delta_{\Sigma_{u+\varepsilon}}-\Delta_{\Sigma_u})\partial_u\xi_{R}(y,u+\varepsilon)
    \\=& \varepsilon^{d-c}\mathcal{O}_{0,d-c}(\rho^{-n-c})+\varepsilon^{1-c}\mathcal{O}_{0,c}(\rho^{1-2n-c}),
\end{align*}
where we use  $\Delta_\Sigma \partial_u \xi_R=O_{0,d}(\rho^{-n-d})$ and
\begin{align*}
        &  \varepsilon^{-c} \left( \Delta_{\Sigma_{u+\varepsilon}} - \Delta_{\Sigma_u} \right) \partial_u\xi_{R}(y, u+\varepsilon)  \\
        = & \varepsilon^{-c} \left( g_{\Sigma_{u+\varepsilon}}^{\beta\gamma} - g_{\Sigma_u}^{\beta\gamma} \right) \partial_{\beta\gamma} \partial_u\xi_{R}(\mathbf{x}, u+\varepsilon) \\
        &\quad + \varepsilon^{-c}  \left( g^{\beta\gamma}_{\Sigma_{u+\varepsilon}} \tilde{\Gamma}^{\alpha}_{\beta\gamma}(\mathbf{x}, u+\varepsilon) - g^{\beta\gamma}_{\Sigma_u} \tilde{\Gamma}^{\alpha}_{\beta\gamma}(\mathbf{x}, u) \right) \partial_\alpha \partial_u\xi_{R}(y, u+\varepsilon)  \\
        = & \varepsilon^{1-c}\mathcal{O}_{0,c}(\rho^{1-2n-c}).
\end{align*}
Therefore,  $\Delta_\Sigma \zeta_R=\varepsilon^{d-c}\mathcal{O}_{0,c}(\rho^{-n-c})$, then $\zeta_R=\varepsilon^{d-c}\mathcal{O}_{0,c}(\rho^{2-n-c})$. 

By passing a subsequence, $\xi_{R}$ converges to $\xi$ uniformly $C^{2,c'}(\Sigma)$ for any fixed $c'\in (0,c)$, and $\partial_u \xi_{R}$ converges to $\partial_u\xi$ uniformly $C^{3,c'}(\Sigma)$. 
Thus, $\xi=\mathcal{O}_{2}(\rho^{2-n-c})$, $\partial_u \xi=\mathcal{O}_{2}(\rho^{2-n-c})$. 

 Recall that under the coordinate $\{u,y_1,\cdots y_{n-1}\}$, $g_{ij}=y_n^{-2}\delta_{ij}+O(\rho^{-2q})$, where $i,j=1,...,n-1$ or $u$. Then $\partial_u= (1+O(\rho^{-2q}))y_n^{-1}\nu+(\partial_u)^T$ and $(\partial_u)^T=O(\rho^{-2q})$, where $(\partial_u)^T$ is the tangential part of $\partial_u$ with respect to the level sets. Thus, $\nabla_\nu \xi_\alpha=\mathcal{O}_2(\rho^{2-n-c})$.

Note that Equation \eqref{wuy} implies $|\nabla u|=-u+\mathcal{O}_1(\rho^{1-2q})$ and $\langle du, dy_\alpha\rangle=\mathcal{O}_1(\rho^{1-2q})$, we get
\begin{equation*}
\begin{split}
    \nabla_\nu (-uy_\alpha)=&\langle \frac{\nabla u}{|\nabla u|},  \nabla (-uy_\alpha)\rangle
    \\=&-y_\alpha |\nabla u|-\frac{u}{|\nabla u|} \langle du,dy_\alpha \rangle
    \\=&-y_\alpha |\nabla u|+\mathcal{O}_1(\rho^{1-2q})
    \\=& -w_\alpha+\mathcal{O}_1(\rho^{2-2q}).
\end{split}
\end{equation*}
Let $w_\alpha=\xi-uy_\alpha$. Then  we have $\nabla_\nu w_\alpha=-w_\alpha+\mathcal{O}_1(\rho^{2-n-c})$. 

Next, we will show that  $\{ w_1,\cdots,w_{n-1}\}$ forms a flat coordinate system on $\Sigma$. Note that $w_1,\cdots,w_{n-1}$ are harmonic functions on $\Sigma$. Furthermore, 
\[\langle \nabla^\Sigma w_\alpha,\nabla^\Sigma w_\beta\rangle=\langle \nabla^\Sigma(uy_\alpha)+\mathcal{O}_1(\rho^{1-n-c}), \nabla^\Sigma(uy_\beta)+\mathcal{O}_1(\rho^{1-n-c})\rangle=\delta_{\alpha\beta}+\mathcal{O}_1(\rho^{1-n-c}).\]
Thus, $|\nabla^\Sigma w_\alpha|$ is bounded. Then $w_\alpha$ is a linear function on $\Sigma$ which implies that $\langle \nabla^\Sigma w_\alpha,\nabla^\Sigma w_\beta\rangle$ is constant. Hence, $\langle \nabla^\Sigma w_\alpha,\nabla^\Sigma w_\beta\rangle=\delta_{\alpha\beta}$, i.e., the metric $g$ satisfies Equation \eqref{uYw}.
   
   Finally,  taking the inverse of the metric in \eqref{uYw}, we have $ \langle du,dw_\alpha\rangle=-|\nabla u|^{2}Y_\alpha$, therefore,
   \begin{equation*}
       Y_\alpha=-|\nabla u|^{-1}\nabla_\nu w_\alpha=-\frac{w_\alpha}{u}+\mathcal{O}_1(\rho^{2-n-c}).
   \end{equation*}
\end{proof}

Similar to \cite[Lemma 6.5]{HZ24}, we have the following lemma.
\begin{lemma} \label{lemma:ell function}
On each level set,    $d^\Sigma Y_\alpha=0$. In particular, $Y_\alpha= \nabla^\Sigma \ell$, where $\ell$ is a function on the level set satisfying $\ell_u=\frac{|\mathbf{w}|^2}{2u^2}+L+O(\rho^{3-2q})$, where $\mathbf{w}=(w_1,\cdots,w_{n-1})$ and $L$ is a linear function on $\Sigma$.
\end{lemma}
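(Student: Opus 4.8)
The plan is to mimic the proof of \cite[Lemma~5.4]{HZ24}, upgrading the hyperboloidal weight $r^{-q}$ on $M^n$ to the weight $\rho^{-2q}$ on the leaves $\Sigma=\{u=\mathrm{const}\}$ wherever it occurs; the required decay inputs are Corollary~\ref{u uni}, Proposition~\ref{Y decay}, and Lemma~\ref{wauy}.

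The first and crucial step is to show that the $1$-form $Y^\flat=\sum_\alpha Y_\alpha\,dw_\alpha$ is closed on each leaf. Reading off \eqref{uYw}, one has the orthonormal coframe $\theta^0=|\nabla u|^{-1}du$, $\theta^\alpha=dw_\alpha+Y_\alpha\,du$, with dual frame $\nu=|\nabla u|^{-1}\nabla u=|\nabla u|(\partial_u-Y)$ and $\{\partial_{w_\alpha}\}$ an orthonormal frame tangent to $\Sigma$. Since $[\partial_u,\partial_{w_\alpha}]=0$ and $\partial_u\langle\partial_{w_\alpha},\partial_{w_\beta}\rangle=\partial_u\delta_{\alpha\beta}=0$, one computes $d^\Sigma Y^\flat(\partial_{w_\alpha},\partial_{w_\beta})=2\langle\nabla_{\partial_u}\partial_{w_\alpha},\partial_{w_\beta}\rangle$, the right-hand side being antisymmetric in $\alpha,\beta$; so it remains to show this vanishes. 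For this I would invoke the spinorial rigidity exactly as in \cite{HZ24}: by Lemma~\ref{|X|=N} and the flatness of the level sets, $|\nabla u|^{-1/2}\psi_1$ (and, for $n$ even, the pair $|\nabla u|^{-1/2}(\psi_1\oplus(-1)^{\frac n2-1}\hat\psi_1)$) is a parallel spinor on every leaf. Because $\psi_1$ is a single $\widetilde\nabla$-parallel spinor on all of $M^n$, these leafwise-parallel spinors are compatibly transported along $\nu$, and differentiating them along $\nu$ using $\widetilde\nabla_\nu\psi=0$, $\nu\psi_1=\mathbf i\psi_1$ and $\mathrm{Hess}\,u=-|\nabla u|k$ expresses $\langle\nabla_{\partial_u}\partial_{w_\alpha},\partial_{w_\beta}\rangle$ through Clifford multiplication by the symmetric tensor $k$, which forces it to vanish. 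An alternative route is to substitute $\mathrm{Hess}\,u=-|\nabla u|k$ into the Codazzi equation of $\Sigma$ and cancel the ambient curvature terms using that the second fundamental form $h=-k|_\Sigma$ is symmetric and $\Sigma$ is intrinsically flat. I expect this closedness step to be the main obstacle; everything downstream is bookkeeping with decay.

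Granting closedness, each leaf $\Sigma$ is topologically trivial (since $M^n$ is), so $Y^\flat=d^\Sigma\ell$ for some function $\ell=\ell(u,w)$. I would pin $\ell$ down by integrating the asymptotics from $\rho=\infty$ on each leaf: by Proposition~\ref{Y decay}, $Y_\alpha=-\tfrac{w_\alpha}{u}+\mathcal O_1(\rho^{2-2q})$ and $-\tfrac{w_\alpha}{u}=\partial_{w_\alpha}\!\big(-\tfrac{|\mathbf w|^2}{2u}\big)$, and since $q>\tfrac n2\ge\tfrac32$ makes $\rho^{2-2q}$ integrable at $\infty$, requiring $\ell+\tfrac{|\mathbf w|^2}{2u}\to 0$ as $\rho\to\infty$ on $\Sigma$ fixes $\ell$ uniquely and gives $\ell=-\tfrac{|\mathbf w|^2}{2u}+\mathcal O(\rho^{3-2q})$. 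Finally, since $\partial_u$ and $\partial_{w_\alpha}$ commute and $Y_\alpha=\partial_{w_\alpha}\ell$, we get $\partial_{w_\alpha}\ell_u=\partial_u Y_\alpha=\tfrac{w_\alpha}{u^2}+\partial_u\big(\mathcal O_1(\rho^{2-2q})\big)=\partial_{w_\alpha}\!\big(\tfrac{|\mathbf w|^2}{2u^2}\big)+\partial_u\big(\mathcal O_1(\rho^{2-2q})\big)$, and to conclude $\ell_u=\tfrac{|\mathbf w|^2}{2u^2}+\mathcal O(\rho^{3-2q})$ I would control the $u$-derivative of the error term on each leaf by the barrier/comparison argument of Lemma~\ref{wauy} (solving the relevant Laplace equation on large geodesic balls of $\Sigma$ with the error as forcing and dominating it by a continuous barrier equal to $C_1\rho^{3-2q}$ for large $\rho$), together with $\ell_u-\tfrac{|\mathbf w|^2}{2u^2}\to0$ as $\rho\to\infty$ on $\Sigma$. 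The even-dimensional case differs only in replacing $\psi_1$ by the pair above throughout, as in \cite{HZ24}.
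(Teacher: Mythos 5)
Your reduction of $d^\Sigma Y^\flat$ to the antisymmetric quantity $A_{\alpha\beta}=\langle\nabla_{\partial_u}\partial_{w_\alpha},\partial_{w_\beta}\rangle$ is fine, but the mechanism you propose for killing it does not work, and this is exactly the step you flag as the main obstacle. Writing $\partial_u=|\nabla u|^{-1}\nu+Y$ and pairing with $\partial_{w_\beta}$ gives $A_{\alpha\beta}=|\nabla u|^{-1}\langle\nabla_{\partial_{w_\alpha}}\nu,\partial_{w_\beta}\rangle+\nabla^\Sigma_\alpha Y_\beta$, and the Weingarten term equals $-|\nabla u|^{-1}k_{\alpha\beta}$, which is symmetric; antisymmetrizing therefore only returns $A_{\alpha\beta}=\tfrac12\left(\nabla^\Sigma_\alpha Y_\beta-\nabla^\Sigma_\beta Y_\alpha\right)$, so ``symmetry of $k$ plus Codazzi'' is circular and yields no information about $d^\Sigma Y$. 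Likewise the leafwise parallel spinor cannot force the twist to vanish pointwise: the stabilizer of a spinor in $\mathrm{Spin}(n-1)$ is nontrivial, so a rotation $2$-form preserving $|\nabla u|^{-1/2}\psi_1$ need not be zero. In fact no purely local/algebraic argument can succeed, because in the asymptotically flat setting of \cite{HZ24} the very same computation does \emph{not} give pointwise vanishing: \cite[Eq.\ (5.20)]{HZ24} only shows that $\omega_{\alpha\beta}:=Y_{\alpha,\beta}-Y_{\beta,\alpha}$ is \emph{constant} on each (flat) leaf, and one must then use decay of $Y$ to conclude the constant is zero. That is precisely the paper's argument here: constancy of $\omega_{\alpha\beta}$ on the level set, combined with \cref{Y decay}, which forces $\omega_{\alpha\beta}=\mathcal{O}(\rho^{1-2q})\to0$, hence $\omega\equiv 0$. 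So the improved leafwise decay is an essential ingredient of the closedness proof itself, not ``bookkeeping downstream''; your proposal is missing this constancy-plus-decay idea.

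There is also a secondary gap in the $\ell_u$ estimate. Your route differentiates $Y_\alpha=-\tfrac{w_\alpha}{u}+\mathcal{O}_1(\rho^{2-2q})$ in $u$, but \cref{Y decay} only controls tangential derivatives of the error, not $\partial_u$ of it; the barrier argument you invoke is not set up (\cref{wauy} controls $w_\alpha+uy_\alpha$, not $\partial_u$ of the error in $Y_\alpha$), so as written this step is incomplete. The paper avoids the issue: since the $w$-coordinates are Euclidean on each leaf, $\nabla^\Sigma_{\alpha\beta}\ell_u=\partial_u\left(|\nabla u|^{-1}k_{\alpha\beta}\right)$, and the right-hand side involves only $k$ and $|\nabla u|$, whose \emph{ambient} derivatives are already controlled by the decay of $\eta$ and \cref{u uni}; this gives $|u|^{-2}\delta_{\alpha\beta}+\mathcal{O}(\rho^{1-2q})$, and integrating twice along the leaf yields $\ell_u=\tfrac{|\mathbf{w}|^2}{2u^2}+\mathcal{O}(\rho^{3-2q})$. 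Your normalization of $\ell$ at infinity and the leading-order potential $-\tfrac{|\mathbf{w}|^2}{2u}$ are consistent with the paper, but to salvage your version of the last step you would need to prove a genuine analogue of \cref{wauy} for $\partial_u Y_\alpha$, which is additional work not sketched in the proposal.
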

\begin{proof} 
Throughout the proof, we use the coordinate system $\{u,w_1,...,w_{n-1}\}$. We denote $e_\alpha=\frac{\partial}{\partial w_\alpha}$.

It follows directly from the argument in \cite[Lemma 6.5]{HZ24} that $d^\Sigma Y_\alpha = 0$ and $Y_\alpha = \nabla^\Sigma \ell$. More preciously, 
\cite[Equation 6.11]{HZ24} implies $Y_{\alpha,\beta}-Y_{\beta,\alpha}$ is constant on the level set.
Thus, applying Proposition \ref{Y decay}, we get $Y_{\alpha,\beta}-Y_{\beta,\alpha}=\mathcal{O}(\rho^{1-n-c})$. Therefore, $Y_\alpha=\nabla^\Sigma_\alpha \ell$ for some function $\ell$ satisfying $\ell=-\frac{|\mathbf{w}|^2}{u}+\mathcal{O}_2(\rho^{3-n-c})$.

Next, 
we will estimate $\partial_u \ell$.  
Recall that
\begin{equation} \label{kab}
   \nabla^\Sigma_{\alpha\beta}\ell=\frac{1}{2}(Y_{\alpha,\beta}+Y_{\beta,\alpha})= |\nabla u|^{-1}k_{\alpha\beta}
\end{equation}
Note that $k=g+O_1(r^{-q})$ and 
\begin{align*}
    \nabla_\nu (k_{\alpha\beta})=&(\nabla_\nu k)_{\alpha\beta}+k(\nabla_\nu e_\alpha,e_\beta)+k( e_\alpha,\nabla_\nu e_\beta)
    \\=& \langle \nabla_\nu e_\alpha, e_\beta\rangle+\langle e_\alpha,\nabla_\nu e_\beta \rangle+O(r^{-q})
    \\=&O(r^{-q}).
\end{align*}
Recall that the second fundamental form of $\Sigma$ is $-k|_\Sigma$ , we have  $\nabla_\gamma e_\beta= k_{\gamma\beta} \nu$, then
\begin{equation*}
    \nabla_\gamma (k_{\alpha\beta})=(\nabla_\gamma k)_{\alpha\beta}
    +k(\nabla_\gamma e_\alpha, e_\beta)+k( e_\alpha, \nabla_\gamma e_\beta)=k_{\nu\beta}k_{\gamma\alpha}+k_{\nu\alpha}k_{\gamma\beta}+O(r^{-q})=O(r^{-q}).
\end{equation*}
Therefore, $\nabla^\Sigma_{\alpha\beta}\ell=|v|^{-1}\delta_{\alpha\beta}+O_1(|v|^{-1}r^{-q})$.

Since $\partial_u=|\nabla u|^{-1}\nu+Y_\alpha e_\alpha$, 
$|\partial_u|=(|Y|^2+|\nabla u|^{-2})^\frac{1}{2}=(|v|^{-2}+\rho^2+O(\rho^{-2q+2}))^\frac{1}{2}=\mathcal{O}(\rho)$, $|\nabla u|=|u|+\mathcal{O}(\rho^{1-2q})$ and $|Y|=\mathcal{O}(\rho)$, we have
\begin{equation*}
    \begin{split}
        \nabla^\Sigma_{\alpha\beta} \partial_u \ell=& \partial_u (k_{\alpha\beta}|\nabla u|^{-1})
    \\=& \partial_u (|\nabla u|^{-1}\delta_{\alpha\beta}+O_1(r^{-q}|v|^{-1}))
    \\=&  |\nabla u|^{-1} \nu (|\nabla u|^{-1}\delta_{\alpha\beta})+Y_\gamma e_\gamma(|\nabla u|^{-1})\delta_{\alpha\beta}+\mathcal{O}(\rho^{1-2q})
    \\=& k_{\nu\nu}|\nabla u|^{-2}\delta_{\alpha\beta}+
   Y_\gamma k_{\gamma \nu}|\nabla u|^{-1}\delta_{\alpha\beta}
    +\mathcal{O}(\rho^{1-2q})
    \\=&|u|^{-2}\delta_{\alpha\beta}+\mathcal{O}(\rho^{1-2q}).
    \end{split}
\end{equation*}
Integrating the above equation twice, we obtain
\begin{equation*}
    \partial_u \ell =\frac{1}{2}|u|^{-2}|\mathbf{w}|^2+L+\mathcal{O}(\rho^{3-2q}).
\end{equation*}

\end{proof} 
Since the decay rates of the metric coefficients in the coordinate system $\{w_1,\cdots,w_{n-1},u\}$, have been established, we can now employ the maximum principle, combined with the DEC condition $\mu\ge 0$, to prove the main theorem.
\begin{proof}[Proof of \cref{thm:rigidity}]
Combining \cref{Y decay} and \cref{lemma:ell function}, we can embed $(M^n,g,k)$ into $(M^n\times \R,\bar{g})$, where 
\begin{align*}
    \bar{g}=&2d\tau du+g
    \\=& 2d\tau du+(|\nabla u|^{-2}+|Y|^2)du^2+2\sum_{\alpha=1}^{n-1}\frac{\partial \ell}{\partial w_\alpha} du dw_\alpha+ \sum^{n-1}_{\alpha=1} w_\alpha^2
    \\=&2d(\tau +\ell)du+(|\nabla u|^{-2}+|Y|^2-2\partial_u\ell)du^2+\sum_{\alpha=1}^{n-1}dw_\alpha^2.
\end{align*}

Finally, we utilize \cite[Lemma 6.6]{HZ24} which states 
$\mu=-\frac{1}{2}|\nabla u|^{2}\Delta_\Sigma F$, where $F=|\nabla u|^{-2}+|Y|^2-2\ell_u$.

Since $Y_\alpha=-w_\alpha u^{-1}+\mathcal{O}(\rho^{2-2q})$ and $|\nabla u|^{-2}=|u|^{-2}+\mathcal{O}(\rho^{-2q+1})$, we have 
\begin{equation*}
    F+2L=|u|^{-2}+\mathcal{O}(\rho^{3-2q}).
\end{equation*}
Recall that $q>\frac{n}{2}$ and $\Delta_\Sigma F\le 0$, 
it follows by the maximum principle that $F+2L=|u|^{-2}$, implying that $\bar{g}$ is flat.
Therefore, taking $\mathcal{N}$ to infinity so that the slabs $\{-\mathcal{N}\le v\le -\frac{1}{\mathcal{N}}\}$ exhaust $M^n$, we deduce that $(M^n,g,k)$ is a slice in  Minkowski spacetime.
\end{proof}

\begin{proof}[Proof of Corollary \ref{cor:no pp-wave}]
Each asymptotically hyperboloidal slice $(M,g,k)$ contained in a pp-wave is spin and has vanishing mass.
See \cite[Corollary 3.7]{HZ24} for the proof in the asymptotically flat case which extends verbatim to our setting.
Hence, we may apply Theorem \ref{thm:rigidity} to conclude that $(M,g,k)$ isometrically embeds into Minkowski space, i.e. the only vacuum pp-wave spacetime.
\end{proof}


\section{Hyperbolic slices in pp-waves}\label{S:example}

In this section, we discuss an example of a hyperbolic slice within a pp-wave spacetime.

Let $\eta(u)$ be a cutoff function supported on $[-2,-1]$, and let $\rho$ be the radial function $\rho $ on $\mathbb R^{n-1}$.
Consider the spacetime $(\overline M^{n+1},\overline g)$ given by $\overline M^{n+1}=\mathbb R^{n+1}$ and $\overline g=2dudt+Fdu^2+g_{\mathbb R^{n-1}}$ where $F=1+\eta(u)f(\rho)$, and where $f$ is a superharmonic functions with $f(\rho)=\rho^{3-n}$ outside a compact set.
Such spacetimes have first been studied by L.-H.~Huang and D.~Lee in \cite{HL2024}.
Note that $(\overline M^{n+1},\overline g)$ equals Minkowski space everywhere outside the support of $\eta$.
More precisely, $F$ equals $1$ outside the support of $\eta$, and rewriting $x_n=u+t$, we obtain that $\overline g=-dt^2+dx_n^2+g_{\mathbb R^{n-1}}$.
The spacetime $(\overline M^{n+1},\overline g)$ is depicted in Figure \ref{Figure1} where the support of $\eta$ is highlighted in orange.
Note that this spacetime models a wave moving at the speed of light.

\begin{figure}[H]
    \centering

\begin{tikzpicture}\label{fig1}
    \begin{axis}[
        axis lines = middle,
        xlabel = \(x_n\),
        ylabel = {\(t\)},
        domain=-3:3,
        samples=100,
        ymin=0, ymax=5,
        xtick=\empty,
        ytick=\empty,
        enlargelimits=true
    ]

    \addplot [
        thick,
        blue,
        domain=-3:3
    ] {sqrt(x^2 + 1)};

    \addplot [
        fill=orange,
        fill opacity=0.3,
        draw=none
    ]
    coordinates {(-3,-2) (-3,-1) (3,5) (3,4)} -- cycle;

    \addplot [
        thick,
        red,
        domain=-3:3
    ] {x + 1};

    \addplot [
        thick,
        red,
        domain=-3:3
    ] {x + 2};

    \end{axis}
\end{tikzpicture}

    \caption{Schematic depiction of a pp-wave spacetime. The wave is traveling at the speed of light along the region shaded in orange.
    Moreover, the spacetime contains an initial data set $(M^n,g,k)$, highlighted in blue, which fails to be asymptotically hyperboloidal}.
    \label{Figure1}
\end{figure}
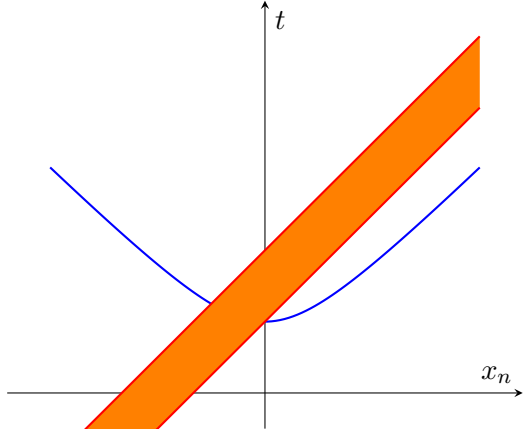

To further explain the above spacetime, we make the following thought experiment:
take an observer centered at the origin. 
Then for $t=0$, the observer is in vacuum till after some time a wave starts to pass through which stretches everything in $x_n$ direction.
Once this wave has passed, the observer will be in vacuum again.

Within that spacetime consider above the slice at $t=0$ the graph given by $\phi(x)=\sqrt{1+|x|^2}$.
In Figure \ref{Figure1} this is initial data set $(M^n,g,k)$ given by this graph is highlighted in blue.
Note that the intersection of $(M^n,g,k)$ with the orange region where $F$ does not equal $1$ is given by a family of horospheres.
From our construction we know that $(F-1)$ decays like $\rho^{3-n}$ on each of these horospheres.
By the same argument as in Section \ref{SS: asymptotic analysis}, we see that $(F-1)$ only decays like $r^{\frac{3-n}2}$ in the initial data set $(M^n,g,k)$.
In particular $(M^n,g,k)$ is not asymptotically hyperboloidal.
This is a stark contrast to the asymptotically flat setting. There is no additional decay rate double process, and therefore there do exist asymptotically flat initial data sets within $(\overline M^{n+1},\overline g)$.

It is an interesting question whether the above constructed initial data set $(M^n,g,k)$ admits perturbations which locally increase the term $\mu-|J|$.
Note that since $(M^n,g,k)$ is not asymptotically hyperboloidal, the argument presented in this manuscript does not apply.

Finally, we now that by replacing $\phi(x)$ with $\psi(x)=\sqrt{1+|x|^2}+2$, we do obtain an asymptotically hyperboloidal initial data set within the pp-wave spacetime.
Note that this initial data set is the hyperbolic space.
In particular, hyperbolic space isometrically embeds into non-trivial pp-wave spacetimes.


\appendix
\section{Proof of the PMT inequality for asymptotically hyperboloidal spin initial data sets}\label{Appendix:PMT}
    Let $(M^n,g,k)$ be a $C^{2,a}_{-q}$-asymptotically hyperboloidal initial data set. We use the convention that $k_{ij}=g(\widetilde{\nabla}_{e_i}e_0,e_j)$ and recall the decay condition of $k$ is given as
    \begin{align*}
        \eta:=k-g=O_1(r^{-q})
    \end{align*}
    Recall that the spinor connection on the the spacetime spinor bundle $\overline{\mathcal{S}}:=\mathcal{S}\oplus\mathcal{S}$ over $(M,g,k)$ is defined by
    \begin{align*}
        \widetilde{\nabla}_j \psi=\nabla_j\psi+\frac{1}{2}k_{jl}e_le_0\psi,
    \end{align*}
    and the corresponding Dirac operators are given by
    \begin{align*}
        \widetilde{\slashed{D}}\psi=\slashed{D}\psi-\frac{1}{2}(\text{tr}_g k) e_0\psi.
    \end{align*}
    Here, we include the integral Lichnerowicz formula, which is the key ingredient for the spinor proof of the PMT.
    \begin{lemma}\label{lemma:Lichnerowicz formula}
        Let $\Omega$ be a bounded open set with smooth boundary in $(M,g,k)$. Then for any $\psi\in C^\infty(\overline{\mathcal{S}})$, we have
        \begin{equation*}
            \begin{split}
                \int_\Omega \left(|\widetilde{\nabla}\psi|^2-|\widetilde{\slashed{D}}\psi|^2+\frac{1}{2}\langle \psi, (\mu+Je_0)\psi\rangle\right)=\int_{\partial\Omega}\langle \psi, \widetilde{\nabla}_\nu \psi+\nu\widetilde{\slashed{D}}\psi\rangle.
            \end{split}
        \end{equation*}
    \end{lemma}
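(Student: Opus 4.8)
The stated identity is the Witten--Lichnerowicz (integrated Weitzenb\"ock) formula for the spacetime Dirac operator $\widetilde{\slashed{D}}$, and the plan is to obtain it from a pointwise divergence identity together with the divergence theorem on $\Omega$. Let $V$ denote the vector field determined by
\begin{equation*}
    g(V,Y)=\operatorname{Re}\,\langle\psi,\widetilde{\nabla}_Y\psi+Y\widetilde{\slashed{D}}\psi\rangle\qquad\text{for all }Y .
\end{equation*}
First I would prove the pointwise identity
\begin{equation*}
    \operatorname{div}V=|\widetilde{\nabla}\psi|^2-|\widetilde{\slashed{D}}\psi|^2+\tfrac12\langle\psi,(\mu+Je_0)\psi\rangle ,
\end{equation*}
and then integrate over $\Omega$: since $g(V,\nu)=\operatorname{Re}\,\langle\psi,\widetilde{\nabla}_\nu\psi+\nu\widetilde{\slashed{D}}\psi\rangle$ along $\partial\Omega$, the divergence theorem delivers the right-hand side of the lemma. (The left-hand side of the lemma is real-valued, so the imaginary part of the boundary pairing is the flux of a divergence-free field and drops out.)

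The pointwise identity rests on two ingredients. The first is the standard Bochner-type computation: writing out $\operatorname{div}V$ in a local $g$-orthonormal frame that is $\nabla$-parallel at the point in question, and using only the Clifford relations and the skew-adjointness of Clifford multiplication by tangent vectors, one arrives at $\operatorname{div}V=|\widetilde{\nabla}\psi|^2-|\widetilde{\slashed{D}}\psi|^2+\langle\psi,(\widetilde{\slashed{D}}^2-\widetilde{\nabla}^{*}\widetilde{\nabla})\psi\rangle$. Here one must be mindful that $\widetilde{\nabla}$ differs from $\nabla$ by the \emph{Hermitian} endomorphism-valued $1$-form $\tfrac12 k_{ij}e_je_0$, hence is not metric-compatible; the additional terms this contributes are exactly those recombined into $\tfrac12 Je_0$ in the next step. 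The second ingredient is the Weitzenb\"ock formula $\widetilde{\slashed{D}}^2=\widetilde{\nabla}^{*}\widetilde{\nabla}+\tfrac12(\mu+Je_0)$, after which the two displayed identities agree.

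To prove the Weitzenb\"ock formula I would start from the classical Lichnerowicz identity $\slashed{D}^2=\nabla^{*}\nabla+\tfrac14\scal$ on $\mathcal S$, hence on $\overline{\mathcal S}=\mathcal S\oplus\mathcal S$, together with the relations $\widetilde{\slashed{D}}\psi=\slashed{D}\psi-\tfrac12(\operatorname{tr}_gk)e_0\psi$ and $\widetilde{\nabla}_i\psi=\nabla_i\psi+\tfrac12 k_{ij}e_je_0\psi$. Expanding $\widetilde{\slashed{D}}^2$ and $\widetilde{\nabla}^{*}\widetilde{\nabla}$ and subtracting, the zeroth-order terms quadratic in $k$ combine with $\tfrac14\scal$ through the Hamiltonian constraint $2\mu=\scal+(\operatorname{tr}_gk)^2-|k|_g^2$ to produce $\tfrac12\mu$, while the terms linear in $\nabla k$ reorganize---using the symmetry of $k$ and the anticommutation $e_ie_j=-e_je_i$ for $i\neq j$---into Clifford multiplication by the $1$-form $\operatorname{div}(k-(\operatorname{tr}_gk)g)=J$ followed by $e_0$, producing $\tfrac12 Je_0$. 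Smoothness of $g,k,\psi$ and compactness of $\overline{\Omega}$ make every manipulation legitimate.

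The step I expect to be the main obstacle is this last one: the Clifford-algebraic bookkeeping for the $\nabla k$-contributions. One has to isolate exactly which contractions of $\nabla_ik_{jl}$ against products of two or three Clifford generators survive, check that every piece which is not of Codazzi type $\nabla_ik_{jl}-\nabla_jk_{il}$ cancels pairwise, and then match what remains---together with the non-metric correction terms coming from the first ingredient---against $Je_0$, with the factor $\tfrac12$ and the overall sign correct. The remaining steps (the divergence computation, the appeal to classical Lichnerowicz, and the application of the divergence theorem) are routine.
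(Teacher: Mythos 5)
Your proposal is correct and follows essentially the same route as the paper: the key step in both is the Weitzenb\"ock-type identity $\widetilde{\slashed{D}}^{*}\widetilde{\slashed{D}}-\widetilde{\nabla}^{*}\widetilde{\nabla}=\tfrac12(\mu+Je_0)$, obtained by expanding the $k$-terms, invoking the classical Lichnerowicz formula $\slashed{D}^2=\nabla^{*}\nabla+\tfrac14\scal$, and recognizing the constraint quantities $\mu$ and $J$, after which the stated identity follows by integration by parts with boundary term $\langle\psi,\widetilde{\nabla}_\nu\psi+\nu\widetilde{\slashed{D}}\psi\rangle$. Your packaging of the integration by parts as the divergence of the field $V$ (and the remark about the real part) is only a cosmetic difference from the paper's formal-adjoint computation.
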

    \begin{proof}
        Choose an orthonormal basis $e_1,\ldots,e_n$ that is parallel at a fixed point. By direct computation, we have
        \begin{align*}
            &\widetilde{\nabla}^*_i=-\nabla_i+\frac{1}{2}k_{ij}e_je_0,\\
            &\widetilde{\slashed{D}}^*=\slashed{D}\psi-\frac{1}{2}(\text{tr}_g k) e_0\psi.
        \end{align*}
        Then, it follows that
        \begin{align*}
            \widetilde{\nabla}^*\widetilde{\nabla}&=\left(-\nabla_i+\frac{1}{2}k_{ij}e_je_0\right)\left(\nabla_i+\frac{1}{2}k_{ij}e_je_0\right)\\
            &=\nabla^*\nabla-\nabla_i(\frac{1}{2}k_{ij}e_je_0)+\frac{1}{2}k_{ij}e_je_0\nabla_i+\frac{1}{4}k_{ij}e_je_0k_{il}e_le_0\\
            &=\nabla^*\nabla-\frac{1}{2}(\nabla_i k_{ij})e_je_0+\frac{1}{4}|k|^2_g\\
            &=\nabla^*\nabla-\frac{1}{2}\left(-\frac{1}{2}|k|^2_g+(\text{div }k)e_0\right).
        \end{align*}
        and we have
        \begin{align*}
            \widetilde{\slashed{D}}^*\widetilde{\slashed{D}}&=\slashed{D}^2-\frac{1}{2}e_i\nabla_i((\text{tr}_g k) e_0)-\frac{1}{2}(\text{tr}_g k) e_0e_i\nabla_i+\frac{1}{4}(\text{tr}_g k)^2\\
            &=\left(\nabla^*\nabla+\frac{1}{4}R\right)-\frac{1}{2}\nabla_i(\text{tr}_g k)e_ie_0+\frac{1}{4}(\text{tr}_g k)^2\\
            &=\nabla^*\nabla+\frac{1}{2}\left(\frac{1}{2}(R+(\text{tr}_g k)^2)-\nabla(\text{tr}_g k)e_0\right).
        \end{align*}
        Combining these two equations, we have
        \begin{align*}
            \widetilde{\slashed{D}}^*\widetilde{\slashed{D}}-\widetilde{\nabla}^*\widetilde{\nabla}&=\frac{1}{2}\left(\frac{1}{2}(R+(\text{tr}_g k)^2-|k|^2_g)+(\text{div }k-\nabla (\text{tr}_g k))e_0\right)\\
            &=\frac{1}{2}\left(\mu+Je_0\right),
        \end{align*}
        which concludes the proof.
    \end{proof}

    \subsection{Preparation}\label{SS:isomorphism} Denote by $M_{ext}$ the designated AH end of $M$. Denote $e=g-b$ and $\eta=k-g$ where $b$ is the standard hyperbolic metric. Let $\mathcal{A}$ be the symmetric endomorphism defined over the tangent bundle $TM_{ext}$ by $g(\mathcal{A}\cdot,\mathcal{A}\cdot)=b(\cdot,\cdot)$. Then we have
    \begin{align*}
        \mathcal{A}=Id-\frac{1}{2}e+O(e^2).
    \end{align*}
    For any $g$-orthonormal frame $\{e_i\}_{i=1}^n$, we denote by $\{\epsilon_i\}_{i=1}^n$ the $b$-orthonormal frame obtained by $\epsilon_i=\mathcal{A}^{-1}e_i$.
    
    We can use $\mathcal{A}$ as an isomorphism between the spinor frame bundles of $b$ and $g$ as well as the spacetime spinor frame bundles by defining $\mathcal{A}e_0=e_0$.  Now we can transfer the spin connection $\mathring{\nabla}$ of $b$ to a connection $\overline{\nabla}:=\mathcal{A}\mathring{\nabla}\mathcal{A}^{-1}$ on the spinor bundle of $g$. Then the connection $1$-forms respectively of $\overline{\nabla}$ and $\nabla$ are given by
    \begin{align*}
        \overline{\omega}_{ij}=g(\overline{\nabla} e_i,e_j) \text{ for }i,j=1,\ldots,n,\\
        \omega_{ij}=g(\nabla e_i,e_j) \text{ for }i,j=1,\ldots,n,
    \end{align*}
    Then for any spinor $\varphi=\varphi^s\sigma_s$ on $M_{ext}$, we can write
    \begin{align*}
        (\nabla-\overline{\nabla})\varphi=\frac{1}{4}\sum_{i,j=1}^n(\omega_{ij}-\overline{\omega}_{ij}) e_i e_j\varphi
    \end{align*}

    \subsection{Spacetime Killing spinor and the mass}
    Recall that an spacetime Killing spinor $\sigma$ on $\mathbb{H}^n$ is defined as the spinor satisfying 
    \begin{align*}
        \mathring{\nabla}_X\sigma+\frac{1}{2}X e_0 \sigma=0.
    \end{align*}
    It is straightforward to check that the function $V_\sigma=|\sigma|^2_b$ is a static potential of $\mathbb{H}^n$. Let $\chi$ be a smooth function on $M$ that is equal to $0$ outside $M_{ext}$ and $1$ for large $r$ in $M_{ext}$.
    \begin{proposition}[Witten's mass formula]
    \label{mass formula}
        Let $\psi=\chi\mathcal{A}\sigma+\psi_0$ be a spacetime spinor on $M$ where $\psi_0$ is a compactly supported spinor. Let $V_\sigma=|\sigma|_b^2$. Then we have
        \begin{align*}
            \mathcal{H}_{hyp}(V_\sigma)&=4\lim_{r\to\infty}\int_{S_r} \left\langle \widetilde{\nabla}_{\mathcal{A}\nu_r}\psi+\mathcal{A}\nu_r\tilde{\slashed{D}}\psi,\psi\right\rangle\\
            &=4\int_{M} |\widetilde{\nabla}\psi|^2-|\tilde{\slashed{D}\psi}|^2+\frac{1}{2}\langle \psi, (\mu+Je_0)\psi\rangle
        \end{align*}
        Here, $\nu_r$ is the $b$-normal of the coordinate spheres $S_r$.
    \end{proposition}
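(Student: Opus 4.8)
The plan is to obtain both equalities from the integral Lichnerowicz identity of \cref{lemma:Lichnerowicz formula}, combined with the standard Witten-type expansion of the boundary term in hyperboloidal coordinates, as carried out for related settings in \cite{Witten1981,Wang2001,CH2003,XZ2008}.

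\emph{Step 1: reduction to a boundary limit.} I apply \cref{lemma:Lichnerowicz formula} on $\Omega=B_r$, the region bounded by the coordinate sphere $S_r$ in $M_{ext}$ (together with the compact core), obtaining
\begin{equation*}
\int_{B_r}\left(|\widetilde{\nabla}\psi|^2-|\widetilde{\slashed{D}}\psi|^2+\tfrac12\langle\psi,(\mu+Je_0)\psi\rangle\right)=\int_{S_r}\langle\psi,\widetilde{\nabla}_\nu\psi+\nu\widetilde{\slashed{D}}\psi\rangle,
\end{equation*}
where $\nu$ is the outward $g$-unit normal of $S_r$. Since $\psi_0$ is compactly supported and $\chi\equiv1$ for large $r$, near infinity $\psi=\mathcal{A}\sigma$; using $\mathcal{A}=\mathrm{Id}-\tfrac12 e+O(e^2)$, the connection-difference formula from \cref{SS:isomorphism}, and the Killing equation $\mathring{\nabla}_X\sigma=-\tfrac12 Xe_0\sigma$, one checks that $\widetilde{\nabla}(\mathcal{A}\sigma)=O(r^{-q}|\sigma|_b)=O(r^{-q+1/2})$ and likewise $\widetilde{\slashed{D}}(\mathcal{A}\sigma)=O(r^{-q+1/2})$, so $|\widetilde{\nabla}\psi|^2,|\widetilde{\slashed{D}}\psi|^2=O(r^{1-2q})$. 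Against $dV_g\sim r^{n-2}\,dr\,d\sigma$ this is integrable precisely because $q>\tfrac n2$, while $|\langle\psi,(\mu+Je_0)\psi\rangle|\le C(|\mu|+|J|_g)\,\bar r$ is integrable by \cref{Def:AH}; hence each term on the left is absolutely integrable over $M$, and the left-hand side converges to $\int_M(|\widetilde{\nabla}\psi|^2-|\widetilde{\slashed{D}}\psi|^2+\tfrac12\langle\psi,(\mu+Je_0)\psi\rangle)$. Finally $\nu=\mathcal{A}\nu_r+O(r^{-q})$ with $\nu_r$ the $b$-unit normal, and the $g$- and $b$-area elements of $S_r$ differ by a factor $1+O(r^{-q})$; the resulting discrepancy in the boundary integral is $O(r^{-2q+1})$ pointwise, hence $O(r^{n-2q})\to0$ after integrating over $S_r$. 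Thus in the limit we may replace $\nu$ by $\mathcal{A}\nu_r$ and $\psi$ by $\mathcal{A}\sigma$, and it remains to compute $\lim_{r\to\infty}\int_{S_r}\langle\widetilde{\nabla}_{\mathcal{A}\nu_r}(\mathcal{A}\sigma)+\mathcal{A}\nu_r\widetilde{\slashed{D}}(\mathcal{A}\sigma),\mathcal{A}\sigma\rangle$.

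\emph{Step 2: the boundary limit equals $\tfrac14\mathcal{H}_{hyp}(V_\sigma)$.} Write $k=g+\eta$, so that $\widetilde{\nabla}_j\psi=\nabla_j\psi+\tfrac12 e_je_0\psi+\tfrac12\eta_{jl}e_le_0\psi$. Since $\mathcal{A}$ intertwines Clifford multiplication and $\sigma$ is a Killing spinor, $\overline{\nabla}_{e_j}(\mathcal{A}\sigma)=-\tfrac12 e_je_0(\mathcal{A}\sigma)$; combining this with the connection-difference formula gives
\begin{equation*}
\widetilde{\nabla}_j(\mathcal{A}\sigma)=\tfrac14\sum_{i,l}(\omega_{il}-\overline{\omega}_{il})(e_j)\,e_ie_l(\mathcal{A}\sigma)+\tfrac12\eta_{jl}e_le_0(\mathcal{A}\sigma).
\end{equation*}
Substituting this expression and the induced one for $\widetilde{\slashed{D}}(\mathcal{A}\sigma)$ into the boundary integrand, expanding $\omega-\overline{\omega}$ to leading order in $e=g-b$ (a first-order expression in $\mathring{\nabla}e$ up to $O(e^2)$), and invoking the Killing-spinor identities $|\sigma|_b^2=V_\sigma$, $\langle e_le_0\sigma,\sigma\rangle_b=\mathring{\nabla}_l V_\sigma$ together with $\mathrm{Hess}_b V_\sigma=V_\sigma b$, the integrand collapses---modulo a tangential divergence along $S_r$, which integrates to zero, and modulo remainders quadratic in $(e,\eta)$, which are $o(r^{n-2q})$---to one quarter of the Chru\'sciel--Herzlich density $V_\sigma(\mathrm{div}_b e-d\,\mathrm{tr}_b e)+\mathrm{tr}_b(e+2\eta)\,dV_\sigma-(e+2\eta)(\mathring{\nabla}V_\sigma,\cdot)$ evaluated on $\nu_b$. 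Letting $r\to\infty$ gives $4\lim_{r\to\infty}\int_{S_r}\langle\widetilde{\nabla}_{\mathcal{A}\nu_r}\psi+\mathcal{A}\nu_r\widetilde{\slashed{D}}\psi,\psi\rangle=\mathcal{H}_{hyp}(V_\sigma)$, and together with Step 1 this establishes both equalities.

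\emph{Expected main obstacle.} The only non-routine part is the boundary computation of Step 2: matching the Clifford-algebraic expansion of the Witten integrand against the Chru\'sciel--Herzlich mass density, keeping careful track of which terms form tangential divergences along $S_r$ and hence drop out, and verifying that every quadratic-in-$(e,\eta)$ remainder decays faster than $r^{n-2q}$ so as to vanish in the limit. Here the precise asymptotics of the type I Killing spinor $\sigma$ on $\mathbb{H}^n$---in particular that $|\sigma|_b^2=V_\sigma$ grows linearly in $r$ and that $\langle e_ie_0\sigma,\sigma\rangle_b=\mathring{\nabla}_i V_\sigma$---together with the static-potential equation $\mathrm{Hess}_b V_\sigma=V_\sigma b$ are used decisively; the calculation is the hyperboloidal-spacetime analogue of those in \cite{Witten1981,Wang2001,CH2003,XZ2008}.
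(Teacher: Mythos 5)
Your overall strategy is the paper's: the second equality comes from the integral Lichnerowicz formula of \cref{lemma:Lichnerowicz formula} plus the bookkeeping in your Step 1, which is sound ($q>\tfrac n2$ makes $|\widetilde{\nabla}\psi|^2$ and $|\widetilde{\slashed{D}}\psi|^2$ integrable, $|\mu|\bar r,\ |J|_g\bar r\in L^1$ handles the zeroth-order term, and swapping the $g$-normal and area element for $\mathcal{A}\nu_r$ and $d\mu_b$ costs only $O(r^{n-2q})\to 0$).

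The gap is that the first equality, identifying the boundary limit with $\tfrac14\mathcal{H}_{hyp}(V_\sigma)$, is precisely the content of the paper's proof, and your Step 2 asserts it rather than proves it: you say the integrand ``collapses, modulo a tangential divergence along $S_r$ and quadratic remainders,'' which is the statement to be established, not an argument. The paper carries this out concretely: it writes $\widetilde{\nabla}_{\mathcal{A}\nu_r}\psi+\mathcal{A}\nu_r\widetilde{\slashed{D}}\psi=\mathcal{A}\nu_r\sum_{j\ge2}e_j\widetilde{\nabla}_j\psi$, splits $\widetilde{\nabla}_{e_j}(\mathcal{A}\sigma)$ into the connection-difference piece $(\nabla_j-\overline{\nabla}_j)(\mathcal{A}\sigma)$ plus $\tfrac12\bigl(\eta(e_j)e_0+e_0(\mathcal{A}-\Id)e_j\bigr)(\mathcal{A}\sigma)$, and evaluates the three resulting families of terms, obtaining $\bigl((\operatorname{tr}_b\eta)dV_\sigma-i_{dV_\sigma}\eta\bigr)(\nu_r)$, $\tfrac12\bigl((\operatorname{tr}_b e)dV_\sigma-i_{dV_\sigma}e\bigr)(\nu_r)$ and $\tfrac12 V_\sigma\bigl(\operatorname{div}_b e-d\operatorname{tr}_b e\bigr)(\nu_r)$ up to $O(r^{-2q+1})$ errors. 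Crucially, the unwanted contributions are not discarded as a tangential divergence on $S_r$: they disappear because the total boundary integral is real while those terms are purely imaginary, together with the skew-symmetry of $\omega-\overline{\omega}$ and the symmetry of $e$, which force the residual connection-difference terms ($\mathrm{Re}(I_1)$ and the off-diagonal part of $I_2$ in the paper's notation) to be $O(r^{-2q})$. Since $V_\sigma\sim r$ weights every term, this pointwise accounting (errors $O(r^{-2q+1})$ integrating to $O(r^{n-2q})$, vanishing only because $q>\tfrac n2$) is exactly the delicate part; your proposed mechanism would itself need justification or replacement by the real-part/skew-symmetry argument, so as written the central step is missing.
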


    \begin{proof}
        The second equality follows from \cref{lemma:Lichnerowicz formula}. We now compute the boundary integral, which is similar to \cite[Proposition 3.3]{Maerten2006}. For convenience, we choose the $g$-orthonormal frame such that $e_1=\mathcal{A}\nu_r$. We then have
        \begin{align*}
        \widetilde{\nabla}_{\mathcal{A}\nu_r}\psi+\mathcal{A}\nu_r\tilde{\slashed{D}}\psi&=\mathcal{A}\nu_r\left(\sum_{j=2}^n e_j\widetilde{\nabla}_{j}\right)\psi.
        \end{align*}
        Next, we compute
        \begin{align*}
            \widetilde{\nabla}_X (\mathcal{A}\sigma)=(\nabla_X-\overline{\nabla}_X)(\mathcal{A}\sigma)+\left(\overline{\nabla}_X+\frac{1}{2}k(X) e_0\right)(\mathcal{A}\sigma).
        \end{align*}
        Since $\overline{\nabla}_X(\mathcal{A}\sigma)=\mathcal{A}\mathring{\nabla}_X\sigma=\mathcal{A}(-\frac{1}{2}X e_0 \sigma_0)=-\frac{1}{2}\mathcal{A}X e_0\mathcal{A}\sigma_0$, we obtain
        \begin{align*}
            \widetilde{\nabla}_X (\mathcal{A}\sigma)&=\left(\nabla_X-\overline{\nabla}_X\right)(\mathcal{A}\sigma)+\frac{1}{2}\left(k(X) e_0-\mathcal{A}X e_0\right) (\mathcal{A}\sigma).
        \end{align*}
        Thus the boundary term becomes for $r$ large enough
        \begin{align}\label{eqn:boundary_term}
            &\sum_{j=2}^n\left\langle\mathcal{A}\nu_r e_j\left((\nabla_{j}-\overline{\nabla}_j)+\frac{1}{2}\left(k(e_j) e_0- \mathcal{A}e_j e_0\right)\right)(\mathcal{A}\sigma),\mathcal{A}\sigma\right\rangle\nonumber\\
            &=\sum_{j=2}^n\left\langle\mathcal{A}\nu_r e_j\left((\nabla_{j}-\overline{\nabla}_j)+\frac{1}{2}\left(\eta(e_j) e_0+e_0(\mathcal{A}-Id) e_j\right)\right)(\mathcal{A}\sigma),\mathcal{A}\sigma\right\rangle
        \end{align}
        Recall that $\{\epsilon_j=\mathcal{A}^{-1}e_j\}_{j=1}^n$ is a $b$-orthonormal frame. First, we have
        \begin{align*}
            \sum_{j=2}^n\langle \mathcal{A}\nu_r e_j \eta(e_j) e_0(\mathcal{A}\sigma),\mathcal{A}\sigma\rangle &=\sum_{j=2}^n\langle \mathcal{A}\nu_r\mathcal{A}\epsilon_j \eta(\mathcal{A}\epsilon_j) \mathcal{A}e_0(\mathcal{A}\sigma),\mathcal{A}\sigma\rangle\\
            & =\sum_{j=2}^n\langle \nu_r\epsilon_j (\mathcal{A}^{-1}\circ \eta\circ \mathcal{A})(\epsilon_j) e_0\sigma,\sigma\rangle_b
        \end{align*}
        Note that $\mathcal{A}^{-1}\circ \eta\circ \mathcal{A}=\eta+\frac{1}{2}e\circ \eta-\frac{1}{2}k\circ e+O(|e|^2)$. By the decay condition of $\eta$, it follows that $\mathcal{A}^{-1}\circ k\circ \mathcal{A}=k+O(r^{-2q})$ as $r\to\infty$. In addition, we have
        \begin{align*}
            \nu_r\sum_{j=2}^n\epsilon_j \eta(\epsilon_j)&=\epsilon_1\left(\sum_{j=1}^n \epsilon_j \eta(\epsilon_j)-\epsilon_1 \eta(\epsilon_1)\right)\\
            &=\eta(\nu_r)-(\text{tr}_b \eta)\nu_r,
        \end{align*}
        which implies
        \begin{equation}\label{eqn:boundary_term2}
            \begin{split}
                \sum_{j=2}^n\langle \mathcal{A}\nu_r e_j \eta(e_j) e_0(\mathcal{A}\sigma),\mathcal{A}\sigma\rangle&=\langle \big(\eta(\nu_r)-(\text{tr}_b\eta)\nu_r\big) e_0\sigma,\sigma\rangle_b+O(r^{-2q})\\
                &=((\text{tr}_b \eta)dV_\sigma-i_{dV_\sigma}\eta)(\nu_r)+O(r^{-2q})
            \end{split}
        \end{equation}
        where we use
        \[
            dV_\sigma(X)=X\langle \sigma,\sigma\rangle_b=2\langle \mathring{\nabla}_X \sigma,\sigma\rangle_b=-\langle X e_0\sigma,\sigma\rangle_b.
        \]

        Secondly, we compute
        \begin{align}\label{eqn:boundary_term3}
            &\sum_{j=2}^n\langle \mathcal{A}\nu_r e_j e_0\left(\mathcal{A}-Id\right)e_j(\mathcal{A}\sigma),\mathcal{A}\sigma\rangle\nonumber\\
            &\qquad =\sum_{j=2}^n\langle \mathcal{A}\nu_r\mathcal{A}\epsilon_j\mathcal{A}e_0\left(\mathcal{A}-Id\right)e_j(\mathcal{A}\sigma),\mathcal{A}\sigma\rangle\nonumber\\
            &\qquad =\sum_{j=2}^n\langle \nu_r\epsilon_j e_0(\mathcal{A}^{-1}\left(\mathcal{A}-Id\right)\mathcal{A})(\epsilon_j)\sigma,\sigma\rangle_b\\
            &\qquad =-\frac{1}{2}\sum_{j=2}^n\langle \nu_r\epsilon_j e_0 e(\epsilon_j)\sigma,\sigma\rangle_b+O(r^{-2q})\nonumber\\
            &\qquad =\frac{1}{2} \langle (e(\nu_r)-\text{tr}_b e(\nu_r)+O(r^{-2q})) e_0\sigma,\sigma\rangle=\frac{1}{2} ((\text{tr}_b e)dV_\sigma-i_{dV_\sigma}e)(\nu_r)+O(r^{-2q}).\nonumber
        \end{align}

        Lastly, consider the term
        \begin{align*}
            &\sum_{j=2}^n\langle \mathcal{A}\nu_r e_j(\nabla_{e_j}-\overline{\nabla}_{e_j})(\mathcal{A}\sigma),\mathcal{A}\sigma\rangle\\
            &\qquad =\frac{1}{4}\sum_{j=2}^n\sum_{k,l=1}^n\langle (\omega_{kl}-\overline{\omega}_{kl})(e_j)\mathcal{A}\nu_r e_j e_k e_l \mathcal{A}\sigma,\mathcal{A}\sigma\rangle\\
            &\qquad =\frac{1}{4}\left(\sum_{j,k,l=2}^n \langle (\omega_{kl}-\overline{\omega}_{kl})(e_j)\mathcal{A}\nu_r e_j e_k e_l \mathcal{A}\sigma,\mathcal{A}\sigma\rangle\right.\\
            &\qquad\qquad\qquad \left.+2\sum_{j,k=2}^n \langle (\omega_{1k}-\overline{\omega}_{1k})(e_j)\mathcal{A}\nu_r e_j e_1 e_k \mathcal{A}\sigma,\mathcal{A}\sigma\rangle\right)\\
            &\qquad =\frac{1}{4}\left(\sum_{j,k,l=2}^n \langle (\omega_{kl}-\overline{\omega}_{kl})(e_j)\mathcal{A}\nu_r e_j e_k e_l \mathcal{A}\sigma,\mathcal{A}\sigma\rangle\right.\\
            &\qquad\qquad\qquad \left.+2\sum_{j,k=2}^n \langle (\omega_{1k}-\overline{\omega}_{1k})(e_j) e_j e_k \mathcal{A}\sigma,\mathcal{A}\sigma\rangle\right)=:\frac{1}{4}(I_1+2I_2).
        \end{align*}
        Since the resulting integral is real, we only need to consider the real part of the above expression. The first term $I_1$ can be estimated as the following: using the skew-symmetric property of $\omega_{kl}-\overline{\omega}_{kl}$ and the fact that the summand is purely imaginary if $k=l$, we have
        \begin{align*}
            &\text{Re}(I_1)=\sum_{\substack{j,k,l=2\\ j,k,l\text{: distinct}}}^n \langle (\omega_{kl}-\overline{\omega}_{kl})(e_j)\mathcal{A}\nu_r e_j e_k e_l \mathcal{A}\sigma,\mathcal{A}\sigma\rangle
        \end{align*}
        However, one may observe that
        \begin{align}\label{eqn:boundary_term4}
            (\omega_{kl}-\overline{\omega}_{kl})(e_j)\epsilon_j\epsilon_k\epsilon_l&=\frac{1}{2}b(\mathcal{A}^{-1}(\mathring{\nabla}_{e_j}\mathcal{A})\epsilon_k-\mathcal{A}^{-1}(\mathring{\nabla}_{e_k}\mathcal{A})\epsilon_j,\epsilon_l)\epsilon_j\epsilon_k\epsilon_l\nonumber\\
            &=b(\mathcal{A}^{-1}(\mathring{\nabla}_{e_j}\mathcal{A})\epsilon_k,\epsilon_l)\epsilon_j\epsilon_k\epsilon_l\\
            &=-\frac{1}{2}b((\mathring{\nabla}_{\epsilon_j} e)\epsilon_k,\epsilon_l)\epsilon_j\epsilon_k\epsilon_l+O(r^{-2q})=O(r^{-2q}),\nonumber
        \end{align}
        thus $I_1$ has no contribution in the limit. 
        For $I_2$, we have
        \begin{align*}
            I_2&=\sum_{j,k=2}^n \langle (\omega_{1k}-\overline{\omega}_{1k})(e_j) e_j e_k \mathcal{A}\sigma,\mathcal{A}\sigma\rangle\\
            &=-\sum_{j=2}^n \langle (\omega_{1j}-\overline{\omega}_{1j})(e_j)\sigma,\sigma\rangle+\sum_{\substack{j,k=2\\ j\neq k}}^n \langle (\omega_{1k}-\overline{\omega}_{1k})(e_j) e_j e_k \mathcal{A}\sigma,\mathcal{A}\sigma\rangle.
        \end{align*}
        Notice that the second sum is purely imaginary, so we have
        \begin{align}\label{eqn:boundary_term5}
            \text{Re}(I_2)&=-\left(\sum_{j=2}^n (\omega_{1j}-\overline{\omega}_{1j})(e_j)\right)V_\sigma \nonumber\\
            &=V_\sigma\sum_{j=2}^n g((\mathring{\nabla}_{e_1}\mathcal{A})\mathcal{A}^{-1}e_j-(\mathring{\nabla}_{e_j}\mathcal{A})\mathcal{A}^{-1}e_1,e_j)\nonumber\\
            &=-\frac{1}{2}V_\sigma\sum_{j=2}^n b(\mathcal{A}^{-1}(\mathring{\nabla}_{\epsilon_1} e)\epsilon_j-\mathcal{A}^{-1}(\mathring{\nabla}_{\epsilon_j}\mathcal{A})\epsilon_1,\epsilon_j)+O(r^{-2q+1})\\
            &=-\frac{1}{2}V_\sigma\sum_{j=2}^n b((\mathring{\nabla}_{\epsilon_1} e)\epsilon_j-(\mathring{\nabla}_{\epsilon_j} e)\epsilon_1,\epsilon_j)+O(r^{-2q+1})\nonumber\\
            &=\frac{1}{2}V_\sigma(\text{div}_b (e)(\nu_r)-d(\text{tr}_b (e)))(\nu_r)+O(r^{-2q+1}).\nonumber
        \end{align}
        Combining \eqref{eqn:boundary_term}, \eqref{eqn:boundary_term2}, \eqref{eqn:boundary_term3}, \eqref{eqn:boundary_term4}, and \eqref{eqn:boundary_term5}, we obtain that
        \begin{align*}
            &\lim_{r\to\infty}\int_{S_r} \left\langle \widetilde{\nabla}_{\mathcal{A}\nu_r}\psi+\mathcal{A}\nu_r\tilde{\slashed{D}}\psi,\psi\right\rangle\nonumber\\
            &\quad =\frac{1}{4}\lim_{r\to\infty}\int_{S_r} V_\sigma(\text{div}_b(e)-d\text{tr}_b(e))+\text{tr}_b(e+2\eta)dV_\sigma-(e+2\eta)(\nabla^b V,\cdot)=\frac{1}{4}\mathcal{H}_{hyp}(V_\sigma).
        \end{align*}

    \end{proof}

\subsection{Spinor existence}
We follow the argument of \cite{Zhang2004}. Let $C_c^\infty(\overline{\mathcal{S}})$ be the space of smooth spinor fields on $M$ with compact support. Let $H_0^1(\overline{\mathcal{S}})$ be the closure of $C_c^\infty(\overline{\mathcal{S}})$ in $H^1_{\text{loc}}$ with respect to the norm induced by the $H^1$-norm, as
\begin{align*}
    ||\phi||_{H^1}^2=||\phi||_{L^2}^2+||\nabla\phi||_{L^2}^2,
\end{align*}
where $\nabla$ is the Levi-Civita connection of $g$.

Define a bounded sesquilinear form $\mathcal{B}$ on $C_c^\infty(\overline{\mathcal{S}})$ by
\begin{align*}
    \mathcal{B}(\phi,\psi)=\int_{M} \langle \tilde{\slashed{D}}\phi,\tilde{\slashed{D}}\psi\rangle.
\end{align*}
Then by the Lichnerowicz formula, we have
\begin{equation*}
\begin{split}
    \mathcal{B}(\phi,\phi)&=\int_M |\widetilde{\nabla}\phi|^2+\frac{1}{2}\langle \phi,(\mu+Je_0)\phi\rangle\\
    &\ge \int_M |\nabla\phi|^2+\sum_i \text{Re}\langle \nabla_i \phi,k_{ij}e_je_0\phi\rangle+\frac{1}{4}|k_{ij}e_je_0\phi|^2\\
    &\ge \int_M \frac{1}{2}|\nabla\phi|^2+\sum_{i}\frac{1}{2}|\nabla_i \phi+k_{ij}e_je_0\phi|^2-\frac{1}{4}|k_{ij}e_je_0\phi|^2\\
    &\ge \frac{1}{2}||\phi||_{H^1}^2-C||\phi||_{L^2}^2,
\end{split}
\end{equation*}
where $C$ only depends on $k$ and the dimension $n$. Note that the second line uses the dominant energy condition. By the above inequality, we can continuously extend $\mathcal{B}$ to $H_0^1(\overline{\mathcal{S}})$ as a (not necessarily strict) coercive sesquilinear form. 

Let $\sigma$ be an spacetime Killing spinor, which is a solution to the equation
\begin{align*}
    \mathring{\nabla}_X \sigma+\frac{1}{2}X e_0\sigma=0.
\end{align*}
Recall $\chi$ is a cut-off function that vanishes outside of $M_{ext}$ and equals $1$ for $r$ large enough. Due to the decay conditions, we have
\begin{align*}
    \chi\mathcal{A}\sigma\notin L^2, \quad \widetilde{\nabla}(\chi\mathcal{A}\sigma)\in L^2, \quad \tilde{\slashed{D}}(\chi\mathcal{A}\sigma)\in L^2.
\end{align*}
Since $\eta=k-g=O(r^{-q})$, we have the following property:

\begin{lemma}\label{lemma:trivial kernel}
    If $\phi\in L^2$ and $\widetilde{\nabla}\phi\equiv 0$, then $\phi\equiv 0$.
\end{lemma}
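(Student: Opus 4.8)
The plan is to show that a nonzero $\widetilde\nabla$-parallel spinor must decay too slowly on an asymptotically hyperboloidal end to be square-integrable. First I would note that $\widetilde\nabla$ is a metric connection, so along any curve a parallel section solves a linear first-order ODE; since $M$ is connected, if $\phi$ vanishes at one point it vanishes identically. Thus it suffices to assume $\phi$ is nowhere zero (it is also smooth, since $g,k\in C^{2,\alpha}$) and derive a contradiction with $\phi\in L^2$.

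The key estimate comes from the parallel equation $\nabla_j\phi=-\tfrac12 k_{jl}e_le_0\phi$. For a $g$-unit vector $w$, a direct Clifford computation — using that $e_le_m$ is skew-Hermitian for $l\neq m$, so the off-diagonal terms cancel — gives $|\nabla_w\phi|=\tfrac12|k(w,\cdot)|_g\,|\phi|\le \tfrac12|k|_{\mathrm{op}}|\phi|$. Since $k-g=O(r^{-q})$, on the designated end $M_{ext}$ in hyperboloidal coordinates we have $|k|_{\mathrm{op}}=1+O(r^{-q})$. I would then fix, for each direction, a unit-speed radial curve $\gamma$ running to infinity as in the proof of \cref{decay est}; along $\gamma$ one has $\tfrac{dr}{ds}=\sqrt{1+r^2}\,(1+O(r^{-q}))$, hence $r\simeq e^{cs}$ for some $c>0$ and $\tfrac{d}{ds}\ln r=1+O(r^{-\min(2,q)})$. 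Combining,
\begin{equation*}
    \frac{d}{ds}\ln\!\big(r^{1/2}|\phi|\big)(\gamma(s))\ \ge\ -\tfrac12\big(1+O(r^{-q})\big)+\tfrac12\big(1+O(r^{-\min(2,q)})\big)\ =\ -\,C\,r^{-\min(2,q)},
\end{equation*}
and since $\int_0^\infty r(\gamma(s))^{-\min(2,q)}\,ds<\infty$, integrating yields $|\phi|(\gamma(s))\ge c_0\, r^{-1/2}$. Because $\phi$ is nowhere zero and the coordinate sphere $\{r=R\}$ is compact, the constant $c_0$ can be chosen uniform over all radial rays.

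Finally, using that $g$ and $b$ are uniformly comparable on $M_{ext}$ and that $d\mathrm{vol}_b\simeq r^{n-2}\,dr\,d\sigma$ for large $r$, I would conclude
\begin{equation*}
    \int_{M_{ext}}|\phi|^2\,d\mathrm{vol}_g\ \gtrsim\ \int_R^\infty\!\!\int_{\mathbb S^{n-1}} r^{-1}\cdot r^{n-2}\,d\sigma\,dr\ \gtrsim\ \int_R^\infty r^{n-3}\,dr\ =\ \infty
\end{equation*}
for $n\ge 3$, contradicting $\phi\in L^2(M)$; hence $\phi\equiv0$.

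The only genuinely delicate point is the asymptotic bookkeeping on the end — the relation $r\simeq e^{cs}$ along radial curves, the fact that $\tfrac{d}{ds}\ln r\to 1$, and the uniformity of $c_0$ across rays — but these are standard for asymptotically hyperboloidal metrics. In fact even the cruder bound $|\nabla|\phi||\le\tfrac12|k|_g|\phi|$ with the Frobenius norm (which tends to $\sqrt n$) would suffice, since then $|\phi|\gtrsim r^{-\sqrt n/2-\varepsilon}$ and the integral still diverges because $n-1-\sqrt n>0$ for $n\ge3$. So there is no real obstacle beyond carefully tracking the geometry of the end.
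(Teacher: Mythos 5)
Your proposal is correct and takes essentially the same route as the paper's own proof: both extract from $\widetilde{\nabla}\phi\equiv 0$ the pointwise differential inequality $\big|\nabla|\phi|\big|\le\tfrac12\,|k|_{\mathrm{op}}\,|\phi|$ with $|k|_{\mathrm{op}}\le 1+|\eta|$ and integrate it along curves running to infinity to obtain a slow-decay lower bound on $|\phi|$ (the paper gets $|\phi(x)|^2\ge|\phi(x_0)|^2e^{-(1+C)d_g(x,x_0)}$ with $|\eta|\le C<1$, while you get the sharper and more carefully justified $|\phi|\ge c_0\,r^{-1/2}$ uniformly over radial rays), which is then incompatible with $\phi\in L^2$ against the volume growth $\sim r^{n-2}\,dr\,d\sigma$ for $n\ge 3$. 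The one slip is your side remark that $\widetilde{\nabla}$ is a metric connection — it is not, since $\partial_i|\phi|^2=-k_{ij}\langle e_je_0\phi,\phi\rangle$ need not vanish (compare \eqref{XN}) — but the conclusion you draw from it, namely that a $\widetilde{\nabla}$-parallel section vanishing at one point vanishes identically, only needs uniqueness for the linear parallel-transport ODE, so the argument is unaffected.
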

\begin{proof}
    This can be seen as follows: $\widetilde{\nabla}\phi\equiv 0$ implies that
    \[ \big|\partial_i |\phi|^2\big|\le (1+|\eta|)|\phi|^2.\]
    Suppose that $\phi\not\equiv 0$. Choose $x_0$ outside a large ball so that $|\phi(x_0)|\ne 0$ and $|\eta|\le C<1$ for some constant $C$. We then have 
    \[
        |\phi(x)|^2\ge |\phi(x_0)|^2\exp{\big(-(1+C)d_g(x,x_0)\big)}.
    \] Hence, $\phi$ cannot be in $L^2$ for $n\ge 3$, therefore, we conclude that $\phi\equiv 0$.
\end{proof}

Now we prove the following existence result:
\begin{proposition}\label{prop:existence of spinor}
    There exists a unique spinor field $\psi_0\in H_0^1(\overline{\mathcal{S}})$ such that
    \begin{align*}
        \tilde{\slashed{D}}(\chi\mathcal{A}\sigma+\psi_0)=0.
    \end{align*}
\end{proposition}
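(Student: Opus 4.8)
The plan is to solve the equation by the standard Lax--Milgram argument, following \cite{Zhang2004}. Write $\xi:=\chi\mathcal{A}\sigma$; by the decay conditions on $e=g-b$ and $\eta=k-g$ one has $\xi\notin L^2(\overline{\mathcal{S}})$ but $\widetilde{\slashed{D}}\xi\in L^2(\overline{\mathcal{S}})$. Finding $\psi_0\in H^1_0(\overline{\mathcal{S}})$ with $\widetilde{\slashed{D}}(\xi+\psi_0)=0$ is then equivalent to finding $\psi_0\in H^1_0(\overline{\mathcal{S}})$ such that
\[
\mathcal{B}(\psi_0,\phi)=-\int_M\langle\widetilde{\slashed{D}}\xi,\widetilde{\slashed{D}}\phi\rangle=:\ell(\phi)\qquad\text{for all }\phi\in H^1_0(\overline{\mathcal{S}}).
\]
First I would record that $\ell$ is a bounded conjugate-linear functional: since $\widetilde{\slashed{D}}$ is first order with locally bounded coefficients, $|\ell(\phi)|\le\|\widetilde{\slashed{D}}\xi\|_{L^2}\,\|\widetilde{\slashed{D}}\phi\|_{L^2}\le C\|\phi\|_{H^1}$, and $\mathcal{B}$ is likewise a bounded sesquilinear form on $H^1_0(\overline{\mathcal{S}})$.

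The crux is to upgrade the estimate $\mathcal{B}(\phi,\phi)\ge\tfrac12\|\phi\|_{H^1}^2-C\|\phi\|_{L^2}^2$ obtained above to genuine coercivity $\mathcal{B}(\phi,\phi)\ge c\|\phi\|_{H^1}^2$ on all of $H^1_0(\overline{\mathcal{S}})$. I would argue by contradiction: if this fails, choose $\phi_j\in H^1_0(\overline{\mathcal{S}})$ with $\|\phi_j\|_{H^1}=1$ and $\mathcal{B}(\phi_j,\phi_j)\to0$. By \cref{lemma:Lichnerowicz formula} and the dominant energy condition,
\[
\mathcal{B}(\phi_j,\phi_j)=\int_M|\widetilde{\nabla}\phi_j|^2+\tfrac12\int_M\langle\phi_j,(\mu+Je_0)\phi_j\rangle\ge\int_M|\widetilde{\nabla}\phi_j|^2,
\]
so $\widetilde{\nabla}\phi_j\to0$ in $L^2$. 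Combining this with $|\nabla\phi_j|\le|\widetilde{\nabla}\phi_j|+\tfrac12|k|\,|\phi_j|$ and $\|\phi_j\|_{H^1}=1$ forces $\|\phi_j\|_{L^2}\ge c_0>0$. Passing to a subsequence with $\phi_j\rightharpoonup\phi$ weakly in $H^1_0(\overline{\mathcal{S}})$, the limit obeys $\widetilde{\nabla}\phi=0$ with $\phi\in L^2$, hence $\phi\equiv0$ by \cref{lemma:trivial kernel}; in particular $\phi_j\to0$ in $L^2_{\mathrm{loc}}$ by Rellich--Kondrachov along a compact exhaustion of $M$. It remains to prevent $L^2$-mass from escaping to infinity, and here I would use the refined Weitzenböck identity on the end: on $(\mathbb{H}^n,b,b)$ the computation in the proof of \cref{lemma:Lichnerowicz formula} gives $\widetilde{\nabla}^*\widetilde{\nabla}=\nabla^*\nabla+\tfrac n4\ge\tfrac n4>0$, so $\int|\widetilde{\nabla}\zeta|^2\ge\tfrac n4\int|\zeta|^2$ for compactly supported spinors; since $g-b$ and $k-b$ lie in $C^{1}_{-q}$ this survives, with constant $\tfrac n8$ say, for $\zeta$ supported in $\{r>r_0\}$ with $r_0$ large. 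Applying this to $\zeta=\chi_{r_0}\phi_j$ for a cutoff $\chi_{r_0}$ vanishing on $\{r<r_0\}$ and equal to $1$ on $\{r>2r_0\}$, and absorbing the commutator term $(\nabla\chi_{r_0})\cdot\phi_j$ (supported on the fixed compact annulus $\{r_0\le r\le 2r_0\}$, where $\phi_j\to0$ in $L^2$), we get $\|\phi_j\|_{L^2(\{r>2r_0\})}\to0$; together with $\|\phi_j\|_{L^2(\{r\le 2r_0\})}\to0$ this contradicts $\|\phi_j\|_{L^2}\ge c_0$.

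Once $\mathcal{B}$ is coercive, the Lax--Milgram theorem yields a unique $\psi_0\in H^1_0(\overline{\mathcal{S}})$ with $\mathcal{B}(\psi_0,\cdot)=\ell$; then $\psi:=\xi+\psi_0$ is a weak, hence by elliptic regularity for the elliptic first-order operator $\widetilde{\slashed{D}}$ a smooth, solution of $\widetilde{\slashed{D}}\psi=0$ with $\psi-\chi\mathcal{A}\sigma=\psi_0\in H^1_0(\overline{\mathcal{S}})$, and uniqueness of $\psi_0$ follows from coercivity. The step I expect to be the main obstacle is the coercivity estimate, and within it the part ruling out concentration of the $L^2$-norm of a minimizing sequence at infinity: this is exactly where the hyperbolic, rather than Euclidean, asymptotics enter, the operative mechanism being the strictly positive zeroth-order term $\tfrac n4$ in the Weitzenböck formula for the Killing connection on $\mathbb{H}^n$, equivalently the absence of $L^2$ imaginary Killing spinors.
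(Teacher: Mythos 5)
Your Lax--Milgram step does not yet prove the statement: it produces $\psi_0\in H^1_0(\overline{\mathcal{S}})$ with $\int_M\langle\widetilde{\slashed{D}}(\chi\mathcal{A}\sigma+\psi_0),\widetilde{\slashed{D}}\phi\rangle=0$ for all $\phi\in H^1_0(\overline{\mathcal{S}})$, i.e.\ a weak solution of the \emph{second-order} equation $\widetilde{\slashed{D}}^*\widetilde{\slashed{D}}\psi=0$, not of $\widetilde{\slashed{D}}\psi=0$. All this says is that $\Phi:=\widetilde{\slashed{D}}\psi\in L^2$ is $L^2$-orthogonal to the range $\widetilde{\slashed{D}}(H^1_0)$, equivalently that $\Phi$ lies in the distributional kernel of the formally self-adjoint operator $\widetilde{\slashed{D}}$. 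Elliptic regularity makes $\psi$ smooth and $\Phi$ a classical solution of $\widetilde{\slashed{D}}\Phi=0$, but it cannot turn orthogonality to the range into $\Phi\equiv0$; that is equivalent to triviality of the $L^2$-kernel of $\widetilde{\slashed{D}}$, which is exactly what remains to be proved. Your coercivity does not help here either: it gives injectivity and closed range of $\widetilde{\slashed{D}}$ on $H^1_0$, while $\Phi$ sits in the orthogonal complement of that range. The missing step is the one the paper carries out at the end of its proof: set $\Phi=\widetilde{\slashed{D}}\psi$, use elliptic regularity to get $\Phi\in H^1_0(\overline{\mathcal{S}})$ with $\widetilde{\slashed{D}}^*\Phi=0$ classically, apply the integral Lichnerowicz formula (\cref{lemma:Lichnerowicz formula}) with the dominant energy condition to conclude $\widetilde{\nabla}\Phi=0$, and then invoke \cref{lemma:trivial kernel} to get $\Phi\equiv0$. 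Since you already use both ingredients elsewhere, the gap is fillable, but as written the sentence asserting that $\psi$ is a weak (hence smooth) solution of $\widetilde{\slashed{D}}\psi=0$ is unjustified.

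On the other hand, your treatment of coercivity is a genuine strengthening relative to the paper's write-up. The paper only records the G\aa rding-type bound $\mathcal{B}(\phi,\phi)\ge\tfrac12\|\phi\|_{H^1}^2-C\|\phi\|_{L^2}^2$, extends $\mathcal{B}$ to $H^1_0(\overline{\mathcal{S}})$ as a ``(not necessarily strict) coercive'' form, and then applies Lax--Milgram, so that the orthogonality-plus-vanishing argument is what actually closes its proof. Your contradiction argument --- strong $L^2$ convergence of $\widetilde{\nabla}\phi_j$, identification of the weak limit as a $\widetilde{\nabla}$-parallel $L^2$ spinor killed by \cref{lemma:trivial kernel}, Rellich on compact sets, and exclusion of concentration at infinity via the model identity $\widetilde{\nabla}^*\widetilde{\nabla}=\mathring{\nabla}^*\mathring{\nabla}+\tfrac n4$ on $(\mathbb{H}^n,b,b)$, which persists under the $O(r^{-q})$ perturbation once you compare the two spacetime connections through the gauge $\mathcal{A}$ and use that $k$ is uniformly bounded --- is sound modulo this routine bookkeeping, and it buys genuine coercivity of $\mathcal{B}$ and immediate uniqueness. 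What it does not buy is the passage from the weak second-order problem to $\widetilde{\slashed{D}}\psi=0$, for which the paper's vanishing argument is still required.
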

\begin{proof}
    Since $\mathcal{B}$ is coercive in $H_0^1(\overline{\mathcal{S}})$ and $\tilde{\slashed{D}}(\chi\mathcal{A}\sigma)\in L^2$, we can apply the Lax-Milgram theorem to obtain a weak solution $\psi_0\in H_0^1(\overline{\mathcal{S}})$ to the equation 
    \[
        \tilde{\slashed{D}}^*\tilde{\slashed{D}}\psi_0=-\tilde{\slashed{D}}^*\tilde{\slashed{D}}(\chi\mathcal{A}\sigma).
    \]
    Note that such a weak solution is unique because the kernel of $\tilde{\slashed{D}}$ in $H_0^1(M,\overline{\mathcal{S}})$ is trivial by the previous observation.
    
    Define $\psi:=\chi\mathcal{A}\sigma+\psi_0$ and $\phi:=\tilde{\slashed{D}}\psi$. The elliptic regularity theory implies that $\phi\in H^1_0(\overline{\mathcal{S}})$ and $\tilde{\slashed{D}}^*\phi=0$ in the classical sense. By the Lichnerowicz formula, we have $\widetilde{\nabla}\phi=0$ as well. Then by \cref{lemma:trivial kernel}, we conclude $\phi\equiv 0$. Therefore, $\psi_0$ is the desired solution.
\end{proof}

Combining Propositions \ref{mass formula} and \ref{prop:existence of spinor}, we conclude the following:
\begin{theorem}\cite{CJL2004,Zhang2004}
    Let $(M,g,k)$ be an $C^{2,a}_{-q}$ asymptotically hyperboloidal initial data set satisfying (DEC). Then $E\ge|\vec{P}|$.
\end{theorem}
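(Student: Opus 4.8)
The plan is to assemble the two ingredients already established in the appendix: the integral Lichnerowicz identity of \cref{lemma:Lichnerowicz formula}, packaged as Witten's mass formula (\cref{mass formula}), and the spinor existence statement (\cref{prop:existence of spinor}). First I would fix a spacetime Killing spinor on $\mathbb{H}^n$ whose squared $b$-norm recovers the quantity $E-|\vec P|$. Choose a unit vector $\mathbf a\in\R^n$ with $\mathbf a=-\vec P/|\vec P|$ when $\vec P\ne 0$ (and $\mathbf a$ arbitrary otherwise), and set $V_\sigma:=t+\mathbf a\cdot\mathbf x$. Since $|\mathbf a|=1$, one has $t=\sqrt{1+r^2}>r\ge-\mathbf a\cdot\mathbf x$, so $V_\sigma>0$ on $\mathbb{H}^n$, and $V_\sigma\in\mathcal N$, i.e. $\mathrm{Hess}_b V_\sigma=V_\sigma b$. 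After rotating the hyperboloidal coordinates so that $\mathbf a=e_n$, exactly as in \cref{prop:mass minimizing spinor}, Baum's classification of Killing spinors furnishes a (type I) spacetime Killing spinor $\sigma$ on $\mathbb{H}^n$ with $|\sigma|_b^2=V_\sigma$. Using the $\R$-linearity of $\mathcal H_{hyp}$ (immediate from \cref{def:energy momentum vector}) and the definition of $(E,\vec P)$,
\begin{equation*}
    \mathcal H_{hyp}(V_\sigma)=\mathcal H_{hyp}(t)+\mathbf a\cdot\big(\mathcal H_{hyp}(x_1),\dots,\mathcal H_{hyp}(x_n)\big)=E+\mathbf a\cdot\vec P=E-|\vec P|.
\end{equation*}

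Next I would apply \cref{prop:existence of spinor} to this $\sigma$, producing $\psi_0\in H_0^1(\overline{\mathcal S})$ with $\widetilde{\slashed D}\psi=0$ for $\psi:=\chi\mathcal A\sigma+\psi_0$; the requisite decay $\widetilde{\nabla}(\chi\mathcal A\sigma),\widetilde{\slashed D}(\chi\mathcal A\sigma)\in L^2$ follows from $\eta=k-g=O_1(r^{-q})$ with $q>\tfrac n2$, as in the proof of that proposition. Feeding this $\psi$ into \cref{mass formula}, the $|\widetilde{\slashed D}\psi|^2$ term drops out, leaving
\begin{equation*}
    E-|\vec P|=\mathcal H_{hyp}(V_\sigma)=4\int_M\Big(|\widetilde{\nabla}\psi|^2+\tfrac12\langle\psi,(\mu+Je_0)\psi\rangle\Big).
\end{equation*}
It then remains to note that the integrand is pointwise nonnegative under the dominant energy condition: the endomorphism $Je_0=J_ie_ie_0$ is Hermitian with $(Je_0)^2=|J|_g^2\,\mathrm{Id}$, hence $\langle\psi,Je_0\psi\rangle\ge-|J|_g|\psi|^2$, so $\mu\ge|J|_g$ yields $\langle\psi,(\mu+Je_0)\psi\rangle\ge(\mu-|J|_g)|\psi|^2\ge0$. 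Therefore the right-hand side is $\ge0$, which is exactly $E\ge|\vec P|$.

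I do not expect a genuine obstacle, since the analytically substantial steps are already in hand: the boundary-term computation behind \cref{mass formula} via the frame isomorphism $\mathcal A$ of \cref{SS:isomorphism}, and the coercivity/Lax--Milgram argument underlying \cref{prop:existence of spinor} together with the triviality of $\ker\widetilde{\slashed D}$ in $H_0^1$ from \cref{lemma:trivial kernel}. The only points needing care are verifying that the \emph{null} potential $V_\sigma$ (the borderline case $|\mathbf a|=1$, rather than strictly timelike) is realized as $|\sigma|_b^2$ for an honest Killing spinor, and confirming the Clifford-algebra identity $(Je_0)^2=|J|_g^2$ in the sign conventions fixed above. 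Both are routine.
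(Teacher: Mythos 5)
Your proposal is correct and follows essentially the same route as the paper's own proof: choose the null potential $V_\sigma=t+\mathbf a\cdot\mathbf x$ realized as $|\sigma|_b^2$ of a type I spacetime Killing spinor (as in \cref{prop:mass minimizing spinor}), solve $\widetilde{\slashed D}\psi=0$ via \cref{prop:existence of spinor}, and apply \cref{mass formula} together with the pointwise DEC estimate $\langle\psi,(\mu+Je_0)\psi\rangle\ge(\mu-|J|_g)|\psi|^2\ge 0$. If anything, your normalization $\mathcal H_{hyp}(V_\sigma)=E-|\vec P|$ is the cleaner (and consistent with \cref{prop:mass minimizing spinor}) version of what the appendix states as $E^2-|\vec P|^2$, and it yields $E\ge|\vec P|$ directly.
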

\begin{proof}
    Choose the spacetime Killing spinor $\sigma$ on $\mathbb{H}^n$ such that we have $\mathcal{H}_{hyp}(V_\sigma)=E^2-|\vec{P}|^2$ for $V_\sigma:=|\sigma|^2_b$. By \cref{prop:existence of spinor}, there exists $\psi=\chi\mathcal{A}\sigma+\psi_0$ with $\psi_0\in H^1_0(\overline{\mathcal{S}})$ such that $\tilde{\slashed{D}}\psi=0$. Let $V=|\psi|^2$. Combining \cref{lemma:Lichnerowicz formula} and \cref{mass formula}, we have
    \begin{align*}
        \mathcal{H}_{hyp}(V_\sigma)&=\mathcal{H}_{hyp}(V)\\
        &=4\lim_{r\to\infty}\int_{S_r} \left\langle \widetilde{\nabla}_{\mathcal{A}\nu_r}\psi+\mathcal{A}\nu_r\tilde{\slashed{D}}\psi,\psi\right\rangle\\
        &=4\int_M \left(|\widetilde{\nabla}\psi|^2+\frac{1}{2}\langle \psi, (\mu+Je_0)\psi\rangle\right)\ge 0
    \end{align*}
\end{proof}

\section{Data availability and conflict of interest statement}

The authors declare that there is no conflict of interest and that no data availability issues exist related to this research.

\bibliographystyle{alpha}
\bibliography{literature.bib}

\end{document}